\documentclass[leqno,12pt]{amsart}
\usepackage{amssymb}
\usepackage{amsmath}
\usepackage{enumerate}
\usepackage{amsfonts}
\usepackage{hyperref}
\usepackage{tikz}

\addtolength{\textwidth}{3cm} \addtolength{\textheight}{1cm}
\addtolength{\oddsidemargin}{-1cm}
\addtolength{\evensidemargin}{-1cm}

\newcommand{\supp}{\text{\rm supp}\,}
\newcommand{\diam}{\text{\rm diam}}
\newtheorem{teo}[equation]{Theorem}

\newtheorem{coro}[equation]{Corollary}
\newtheorem{lema}[equation]{Lemma}
\newtheorem{propo}[equation]{Proposition}
\newtheorem{remark}[equation]{Remark}
\newtheorem{defn}[equation]{Definition}

\numberwithin{equation}{section}

\hypersetup{
    colorlinks=true,
    linkcolor= blue,
    citecolor =cyan,
    urlcolor = teal,
}

\begin{document}

\title[H\"ormander's multiplier theorem for the Dunkl transform]{H\"ormander's multiplier theorem\\ for the Dunkl transform}

\author[ J. Dziuba\'nski and A. Hejna]{Jacek Dziuba\'nski and Agnieszka Hejna}

\subjclass[2000]{Primary: 42B15, 42B20, 42B35. Secondary: 42B30, 42B25, 47D03.}
\keywords{Dunkl operators, Dunkl transforms, multipliers, Hardy spaces, maximal functions}

\begin{abstract} For a normalized root system $R$ in $\mathbb R^N$ and a multiplicity function $k\geq 0$ let $\mathbf N=N+\sum_{\alpha \in R} k(\alpha)$. Denote by $dw(\mathbf x)=\prod_{\alpha\in R}|\langle \mathbf x,\alpha\rangle|^{k(\alpha)}\, d\mathbf x $ the associated measure in $\mathbb R^N$.  Let $\mathcal F$ stands for the Dunkl transform. Given a bounded function $m$ on $\mathbb R^N$,  
we prove that if  there is $s>\mathbf N$ such that $m$ satisfies the classical H\"ormander condition with the smoothness  $s$, then
the multiplier operator $\mathcal T_mf=\mathcal F^{-1}(m\mathcal Ff)$ is of weak type $(1,1)$,  strong type $(p,p)$ for $1<p<\infty$, and bounded on a relevant Hardy space $H^1$. To this end we study the Dunkl translations and the Dunkl convolution operators and prove that if $F$ is sufficiently regular, for example its certain Schwartz class seminorm is finite, then the Dunkl convolution operator with the function $F$ is  bounded  on $L^p(dw)$ for $1\leq p\leq \infty$. We also consider boundedness of maximal operators associated with the Dunkl convolutions with Schwartz class functions. 
\end{abstract}

\address{J. Dziuba\'nski and A. Hejna, Uniwersytet Wroc\l awski,
Instytut Matematyczny,
Pl. Grunwaldzki 2/4,
50-384 Wroc\l aw,
Poland}
\email{jdziuban@math.uni.wroc.pl}
\email{hejna@math.uni.wroc.pl}

\thanks{
Research supported by the National Science Centre, Poland (Narodowe Centrum Nauki), Grant 2017/25/B/ST1/00599.}

\maketitle

\section{Introduction and statements of the results}

On the Euclidean space $\mathbb R^N$ we consider  a normalized root system $R$ and a  multiplicity function $k\geq 0$.  Let
$$dw(\mathbf x)=\prod_{\alpha\in R}|\langle \mathbf x,\alpha\rangle|^{k(\alpha)}\, d\mathbf x$$ be  the associated measure in $\mathbb R^N$, where, here and subsequently, $d\mathbf x$ stands for the Lebesgue measure in $\mathbb R^N$.
 Denote by $\mathbf N=N+\sum_{\alpha \in R} k(\alpha)$ the homogeneous dimension of the system and by $G$ the Weyl group generated by the reflections $\sigma_{\alpha}$, $\alpha\in R$. Let $E(\mathbf x,\mathbf y)$ be the associated Dunkl kernel. The kernel $E(\mathbf x,\mathbf y)$ has a unique extension to a holomorphic function in $\mathbb C^N\times\mathbb C^N$. The Dunkl transform
  \begin{equation}\label{eq:DunklTransform}\mathcal F f(\xi)=c_k^{-1}\int_{\mathbb R^N} E(-i\xi, \mathbf x)f(\mathbf x)\, dw(\mathbf x),
  \end{equation}
  where
  $$c_k=\int_{\mathbb{R}^N}e^{-\frac{\|\mathbf{x}\|^2}{2}}\,dw(\mathbf{x})>0,$$
   originally defined for $f\in L^1(dw)$, is an isometry on $L^2(dw)$ and preserves the Schwartz class of functions $\mathcal S(\mathbb R^N)$ (see \cite{deJeu}). Its inverse $\mathcal F^{-1}$ has the form
  $$\mathcal F^{-1} g(x)=c_k^{-1}\int_{\mathbb R^N} E(i\xi, \mathbf x)g(\xi)\, dw(\xi).$$
The Dunkl transform $\mathcal F$ is an analogue of the classical Fourier transform
  $$ \widehat f(\xi)=\int_{\mathbb R^N} e^{-i\langle \xi,\mathbf x\rangle} f(\mathbf x)\, d\mathbf x.$$

Let 
$$\| m\|_{W_2^{s}}=\| \widehat m(\mathbf x)(1+\| \mathbf x\|)^s\|_{L^2(d\mathbf x)}$$for $s \geq 0$ be the classical Sobolev norm.

The  metric measure space $(\mathbb R^N, \| \mathbf x -\mathbf y\|, dw)$ is doubling (see \eqref{eq:doubling}). Let $H^1_{\text{\rm atom}} $ denote the atomic Hardy space in the sense of Coifman--Weiss \cite{CW} on the space of homogeneous type  $(\mathbb R^N, \| \mathbf x -\mathbf y\|, dw)$ (see Section~\ref{sec:Hardy} for details). 

We are in a position to state our main result.

\begin{teo}\label{teo:main_teo}
Let $\psi$ be a smooth radial function such that $\supp \psi \subseteq \{\xi:\frac{1}{4} \leq \|\xi\| \leq 4\}$ and $\psi (\xi) \equiv 1$ for $\{\xi:\frac{1}{2} \leq \|\xi\| \leq 2\}$. If $m$ is a  function on $\mathbb R^N$ which satisfies the H\"ormander condition
\begin{equation}\label{eq:assumption}
M=\sup_{t>0}\|\psi (\cdot)m(t \cdot )\|_{W^{s}_2} < \infty
\end{equation}
for some $s>\mathbf N$, then the multiplier operator
\begin{align*}
\mathcal{T}_mf=\mathcal{F}^{-1}(m\mathcal{F}f),
\end{align*}
originally defined on $L^2(dw)\cap L^1(dw)$, is of
\begin{enumerate}[(A)]
\item{weak type $(1,1)$,}\label{numitem:weak_type}
\item{strong type $(p,p)$ for $1<p<\infty$,}\label{numitem:strong_type}
\item{bounded on the Hardy space $H^1_{\rm atom}$.}\label{numitem:hardy}
\end{enumerate}
\end{teo}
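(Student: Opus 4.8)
The plan is to show that $\mathcal{T}_m$ is a Calderón--Zygmund operator on the space of homogeneous type $(\mathbb{R}^N,\|\mathbf{x}-\mathbf{y}\|,dw)$; once this is established, all three conclusions become standard consequences of the Coifman--Weiss theory. The $L^2(dw)$ boundedness needed to start the machinery is immediate: since $s>\mathbf{N}>N/2$, the Sobolev embedding $W_2^{s}\hookrightarrow L^\infty$ applied to $\psi(\cdot)m(t\cdot)$ together with \eqref{eq:assumption} gives $\|m\|_{L^\infty}\le CM$, and then $\|\mathcal{T}_mf\|_{L^2(dw)}=\|m\,\mathcal{F}f\|_{L^2(dw)}\le\|m\|_{L^\infty}\|f\|_{L^2(dw)}$ by the Plancherel property of $\mathcal{F}$. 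The whole difficulty is therefore concentrated in the kernel estimates.

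First I would decompose the multiplier dyadically. Choosing a partition of unity $1=\sum_{j\in\mathbb{Z}}\eta(2^{-j}\xi)$ for $\xi\neq0$, with $\eta$ supported in the annulus on which $\psi\equiv1$, I write $m=\sum_j m_j$ where $m_j(\xi)=m(\xi)\eta(2^{-j}\xi)$, and $K_j=\mathcal{F}^{-1}(m_j)$, so that $\mathcal{T}_mf=\sum_j f*_{k}K_j$ in the sense of the Dunkl convolution. The Dunkl dilation identity $\mathcal{F}(f(t\cdot))(\xi)=t^{-\mathbf{N}}\mathcal{F}f(\xi/t)$ reduces each piece to the normalized multiplier $m^{(j)}(\xi)=m(2^{j}\xi)\eta(\xi)$ via $K_j(\mathbf{x})=2^{j\mathbf{N}}K^{(j)}(2^{j}\mathbf{x})$, $K^{(j)}=\mathcal{F}^{-1}(m^{(j)})$; since $\eta=\psi\eta$ and multiplication by the fixed $\eta$ is bounded on $W_2^{s}$, the hypothesis \eqref{eq:assumption} yields the \emph{uniform} bound $\|m^{(j)}\|_{W_2^{s}}\le CM$. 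The role of the threshold $s>\mathbf{N}$ is precisely to convert this classical Sobolev smoothness into quantitative regularity and decay of the kernels $K^{(j)}$; here I would invoke the result announced in the abstract, namely that finiteness of a suitable Schwartz-type seminorm of $K^{(j)}$ both makes $f\mapsto f*_{k}K^{(j)}$ bounded on $L^p(dw)$ and produces the pointwise kernel bounds used below.

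The main obstacle is the verification of the Hörmander integral (regularity) condition
\[
\sup_{\mathbf{y}\neq\mathbf{y}'}\ \int_{\|\mathbf{x}-\mathbf{y}\|>2\|\mathbf{y}-\mathbf{y}'\|}\bigl|\mathcal{K}(\mathbf{x},\mathbf{y})-\mathcal{K}(\mathbf{x},\mathbf{y}')\bigr|\,dw(\mathbf{x})\le C,
\]
where $\mathcal{K}(\mathbf{x},\mathbf{y})=\sum_j\tau_{\mathbf{x}}K_j(-\mathbf{y})$ is the kernel of $\mathcal{T}_m$ and $\tau_{\mathbf{x}}$ is the Dunkl translation. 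The essential difficulty is that Dunkl translations are not ordinary shifts: the mass of $\tau_{\mathbf{x}}K_j(-\mathbf{y})$ is governed not by $\|\mathbf{x}-\mathbf{y}\|$ but by the orbit distance $d(\mathbf{x},\mathbf{y})=\min_{g\in G}\|\mathbf{x}-g\mathbf{y}\|$, so one must control a finite sum of contributions indexed by the Weyl group $G$, each weighted by the measure of the relevant ball. Concretely, using the regularity of $K^{(j)}$ coming from $s>\mathbf{N}$, I would first establish bounds of the form
\[
\bigl|\tau_{\mathbf{x}}K_j(-\mathbf{y})\bigr|\ \lesssim\ \sum_{g\in G}\frac{1}{w\bigl(B(\mathbf{x},2^{-j}+\|\mathbf{x}-g\mathbf{y}\|)\bigr)}\,\bigl(1+2^{j}\|\mathbf{x}-g\mathbf{y}\|\bigr)^{-q},
\]
together with the companion estimate for the $\mathbf{y}$-difference, which gains an extra factor $2^{j}\|\mathbf{y}-\mathbf{y}'\|$ and an admissible decay exponent $q$ up to order $s$. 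Summing these estimates over the scales $j$ and over $g\in G$ and integrating in $\mathbf{x}$ then yields the integral condition; the convergence of the sum over $j$ is exactly the point at which the balance between the decay exponent ($\le s$) and the volume growth of $dw$-balls (homogeneous dimension $\mathbf{N}$) forces the requirement $s>\mathbf{N}$.

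Finally, granting the kernel estimates, part \eqref{numitem:weak_type} is the weak-type output of the Calderón--Zygmund decomposition on the doubling space $(\mathbb{R}^N,\|\cdot\|,dw)$; part \eqref{numitem:strong_type} follows by Marcinkiewicz interpolation between \eqref{numitem:weak_type} and the $L^2$ bound for $1<p\le2$, and by duality for $2\le p<\infty$ (the adjoint is $\mathcal{T}_m^{*}=\mathcal{T}_{\overline m}$, and $\overline m$ satisfies \eqref{eq:assumption} with the same constant); and part \eqref{numitem:hardy} follows by showing that $\mathcal{T}_m$ sends each $H^1_{\mathrm{atom}}$-atom to a uniform multiple of an $H^1$-molecule, the size and decay being supplied by the regularity of $\mathcal{K}$ and the moment condition $\int \mathcal{T}_m a\,dw=0$ being inherited from $\int a\,dw=0$ through $\mathcal{F}(\mathcal{T}_m a)(0)=m(0)\mathcal{F}a(0)$.
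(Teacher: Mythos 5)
There is a genuine gap at the center of your argument: the pointwise estimate
\begin{equation*}
\bigl|\tau_{\mathbf{x}}K_j(-\mathbf{y})\bigr|\ \lesssim\ \sum_{g\in G}\frac{1}{w\bigl(B(\mathbf{x},2^{-j}+\|\mathbf{x}-g\mathbf{y}\|)\bigr)}\,\bigl(1+2^{j}\|\mathbf{x}-g\mathbf{y}\|\bigr)^{-q}
\end{equation*}
is not known in the Dunkl setting and cannot be ``invoked'' from anywhere, including the result announced in the abstract, which is an \emph{integrated} $L^1(dw)$ bound for convolution kernels, not a pointwise bound. Pointwise bounds of this type are available only for translations of \emph{radial} functions (via R\"osler's formula \eqref{eq:translation-radial}), whereas your $K^{(j)}=\mathcal F^{-1}m^{(j)}$ is not radial; for non-radial functions the only usable facts are the $L^2(dw)$ contractivity \eqref{eq:t1} and --- this is precisely the paper's new contribution, Theorem~\ref{teo:support} --- the support statement $\supp\tau_{\mathbf x}f(-\,\cdot)\subseteq\mathcal O(B(\mathbf x,r))$. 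The paper's proof replaces your pointwise bounds by weighted $L^1$ estimates obtained from Cauchy--Schwarz over the orbit ball (Proposition~\ref{propo:compact_supports}, Proposition~\ref{propo:weights}, Corollary~\ref{coro:weight}), which yields only the integrated bounds \eqref{eq:K_ell_delta} and \eqref{eq:Holder_int} feeding into Proposition~\ref{propo:integral}. That Cauchy--Schwarz step costs a factor $w(B)^{1/2}$, i.e.\ $\mathbf N/2$ derivatives, and this is the actual reason the theorem demands $s>\mathbf N$; note that if your pointwise bound were available, $s>\mathbf N/2$ would suffice (as in the classical case and in Section~\ref{sec:boundedL1}), so your claim that the summation over scales ``forces $s>\mathbf N$'' is a sign the accounting does not close. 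A second unsubstantiated step is the ``companion estimate'' gaining $2^j\|\mathbf y-\mathbf y'\|$: you give no mechanism for differentiating a Dunkl translation in $\mathbf y$; the paper obtains \eqref{eq:Holder_tilde} by factoring $m_\ell=m_{\ell,1}e^{-\|\xi\|^2}$, so that $\mathcal F^{-1}m_\ell=(\mathcal F^{-1}m_{\ell,1})*h_1$ and all the $\mathbf y$-regularity comes from the heat kernel's Lipschitz estimate \eqref{eq:Holder}.

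Your treatment of part~(C) has a related problem. The kernel of $\mathcal T_m$ decays only in the \emph{orbit} distance $d(\mathbf x,\mathbf y)$, so $\mathcal T_m a$ for an atom supported in $B$ is small only away from $\mathcal O(B)$, a union of $|G|$ balls; it need not be a Coifman--Weiss molecule adapted to the single ball $B$, and splitting it into pieces near each $\sigma(B)$ destroys the moment condition on the individual pieces. The paper avoids molecules entirely: it bounds $\|\mathcal T_m a\|_{L^1(dw)}$ directly (the far part over $(\mathcal O(Q^*))^c$ via Proposition~\ref{propo:integral}, exactly as in the weak-$(1,1)$ proof), and then upgrades $L^1$ boundedness to $H^1$ boundedness through the Riesz-transform characterization $H^1_\Delta=H^1_{\rm Riesz}$ (Theorems~\ref{teo:main1} and~\ref{teo:main4}), applying the same kernel machinery to the multiplier $n(\xi)=-i\xi_j\|\xi\|^{-1}m(\xi)$. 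Your final remark on the vanishing moment and the $L^2$/interpolation/duality skeleton for parts (A) and (B) is fine and matches the paper, but both pillars carrying the weight of the proof --- the kernel size and smoothness estimates, and the atom-to-$H^1$ step --- rest on assertions that are open problems in this generality.
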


Let us remark that we need the regularity of order $s>\mathbf N$ in the H\"ormander's condition \eqref{eq:assumption}. One might  expect that a regularity $s>\mathbf N\slash 2$ would suffice (see the classical H\"ormander's multiplier theorem for the Fourier transform ~\cite{Hormander}). The price of $\mathbf N\slash 2$ we pay in the proof of the theorem is due to the fact that the so called Dunkl translation
\begin{equation}\label{eq:translation} \tau_{\mathbf x}f(\mathbf y)=c_k^{-1}\int E(i\xi , \mathbf x)E(i\xi , \mathbf y)\mathcal Ff(\xi)\, dw(\xi)
\end{equation}
is  bounded on $L^2(dw)$ and it is an open problem if it is bounded on $L^p(dw)$ for $p\ne 2$.
 If the translations $\tau_{\mathbf x}$ for a system of roots are uniformly  bounded operators  on $L^1(dw)$, then the the regularity of order $s>\mathbf N\slash 2$  suffices.  We elaborate this situation in Section \ref{sec:boundedL1}. This happens e.g. in  the case of the product system of roots or radial multipliers (see \cite{ABDH} and \cite{DaiXu}).
So in order to overcome the lack of knowledge about the Dunkl translation on the $L^p(dw)$-spaces for a general system of roots  we use the only information we have, that is, the  boundedness of $\tau_{\mathbf x} $ on $L^2(dw)$ together with very important observation about supports of translations of $L^2(dw)$-functions, which is stated in the following our next main result.

\begin{teo}\label{teo:support}
Let $f \in L^2(dw)$, $\text{\rm supp}\, f \subseteq B(0,r)$, and $\mathbf{x} \in \mathbb{R}^N$. Then  
$$\supp \tau_{\mathbf{x}}f(-\, \cdot) \subseteq \mathcal{O}(B(\mathbf x,r)),$$
where $\mathcal O(B(\mathbf x, r))=\bigcup_{\sigma \in G} B(\sigma (\mathbf x), r))$ is the orbit of the Euclidean closed ball $B(\mathbf x,r)=\{\mathbf y \in \mathbb{R}^N: \| \mathbf x-\mathbf y\|\leq r\}$. 
\end{teo}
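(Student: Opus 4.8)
The plan is to prove the equivalent vanishing statement. Writing $d(\mathbf x,\mathbf y)=\min_{\sigma\in G}\|\mathbf x-\sigma\mathbf y\|$ for the orbit distance, one has $\mathcal O(B(\mathbf x,r))=\{\mathbf y:d(\mathbf x,\mathbf y)\leq r\}$, so the assertion is that $\tau_{\mathbf x}f(-\mathbf y)=0$ for a.e.\ $\mathbf y$ with $d(\mathbf x,\mathbf y)>r$. First I would reduce to a convenient dense class: since $\tau_{\mathbf x}$ is bounded on $L^2(dw)$ and $\mathcal O(B(\mathbf x,r))$ is closed, if $f_n\to f$ in $L^2(dw)$ with each $f_n$ supported in $B(0,r)$ and each $\tau_{\mathbf x}f_n(-\,\cdot)$ supported in $\mathcal O(B(\mathbf x,r))$, then the same holds in the limit. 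Thus it suffices to treat $f\in C_c^\infty$ with $\supp f\subseteq B(0,r)$.

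The next step is to convert the support claim into the vanishing of a pairing. For $\phi\in C_c^\infty(\mathbb R^N)$ with $\supp\phi\cap\mathcal O(B(\mathbf x,r))=\emptyset$, the definition of $\tau_{\mathbf x}$ together with Plancherel's formula for $\mathcal F$ yields
\[
\int_{\mathbb R^N}\tau_{\mathbf x}f(-\mathbf y)\,\phi(\mathbf y)\,dw(\mathbf y)=\int_{\mathbb R^N}E(i\xi,\mathbf x)\,\mathcal Ff(\xi)\,\mathcal F\phi(\xi)\,dw(\xi),
\]
so the theorem is equivalent to the vanishing of the right-hand side whenever $d(\mathbf x,\mathbf z)>r$ for every $\mathbf z\in\supp\phi$. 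To analyze this integral I would invoke R\"osler's positive representation of the Dunkl kernel, $E(i\xi,\mathbf x)=\int_{\mathbb R^N}e^{i\langle\eta,\xi\rangle}\,d\mu_{\mathbf x}(\eta)$, where $\mu_{\mathbf x}$ is a probability measure carried by the convex hull of the orbit $\{\sigma\mathbf x:\sigma\in G\}$, together with the analogous representation of the intertwining operator underlying $\mathcal F$. Substituting these turns every factor into a classical Fourier transform of a compactly supported measure whose support is governed by the orbit of the relevant support set, reducing the $dw$-integral to a classical convolution identity amenable to Paley--Wiener and support bookkeeping.

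The main obstacle, and the reason the statement is delicate, is that this frequency-side bookkeeping produces only the \emph{convex hull} of the orbit (Minkowski sums of convex hulls), whereas the asserted support is the genuinely non-convex orbit ball $\bigcup_{\sigma\in G}B(\sigma\mathbf x,r)$. Recovering the sharp non-convex set is exactly where the mere $L^2(dw)$-boundedness of $\tau_{\mathbf x}$, rather than any positive-kernel representation of the translation (which is unavailable for a general root system), must be exploited. I expect to bridge this gap by working directly in the orbit metric $d$: using the $G$-equivariance of $\tau_{\mathbf x}$ to reduce to $\mathbf x$ and $\mathbf y$ lying in a common closed Weyl chamber, where $d(\mathbf x,\mathbf y)=\|\mathbf x-\mathbf y\|$, and isolating on that chamber the dominant contribution in R\"osler's representation so that a classical finite-propagation estimate forces vanishing once $\|\mathbf x-\mathbf y\|>r$. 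Equivariance would then reassemble the chamber-wise estimates into the union of Euclidean balls over the full orbit, giving the claimed inclusion $\supp\tau_{\mathbf x}f(-\,\cdot)\subseteq\mathcal O(B(\mathbf x,r))$.
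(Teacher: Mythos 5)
Your reductions are fine as far as they go: the density-plus-$L^2$-continuity step is exactly how one should pass from a nice class to general $f$, the dual pairing identity is correct, and the chamber fact $d(\mathbf x,\mathbf y)=\|\mathbf x-\mathbf y\|$ for $\mathbf x,\mathbf y$ in a common closed Weyl chamber is true. You also correctly diagnose the obstruction: R\"osler's representation $E(i\xi,\mathbf x)=\int e^{i\langle \eta,\xi\rangle}\,d\mu_{\mathbf x}(\eta)$ with $\supp \mu_{\mathbf x}\subseteq \operatorname{conv}\mathcal O(\mathbf x)$ can only ever yield convex-hull-type support information (a set like $\operatorname{conv}\mathcal O(\mathbf x)+B(0,r)$, or the annulus from \cite{AAS-Colloq}), which is strictly weaker than $\mathcal O(B(\mathbf x,r))$. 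But the step you offer to close this gap is not an argument. After restricting to a chamber there is no ``classical finite-propagation estimate'' to invoke: for non-radial $f$ no positive or compactly supported kernel representation of $\tau_{\mathbf x}f$ is known, and finite speed of propagation in the orbit metric $d$ for the Dunkl setting is a \emph{consequence} of results like this theorem, not an available input; ``isolating the dominant contribution in R\"osler's representation'' has no identifiable content, since the only R\"osler formula with sharp support control, \eqref{eq:translation-radial}, applies to translations of \emph{radial} functions only. (A smaller technical point: your frequency-side bookkeeping integrates in $\xi$ against $dw(\xi)$, not $d\xi$, so even the convex-hull bound requires absorbing the polynomial weight, e.g.\ as a constant-coefficient operator acting on a Dirac mass; this is repairable, but it does not change the convex-hull ceiling.) So the proposal stops exactly where the theorem begins.

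The missing idea, and the route the paper takes, is an algebraic transfer from the radial case rather than any Fourier--Paley--Wiener analysis. The sharp input is \eqref{eq:support_radial}: for radial $g$ supported in $B(0,r)$ one has $\supp \tau_{\mathbf x}g(-\,\cdot)\subseteq\mathcal O(B(\mathbf x,r))$, directly from \eqref{eq:translation-radial} since $A(\mathbf x,\mathbf y,\eta)\geq d(\mathbf x,\mathbf y)$ on $\supp\mu_{\mathbf x}$. The paper then proves (Lemma~\ref{lem:representation}, by induction on the degree using the Leibniz rule \eqref{eq:Leibniz}, valid when one factor is radial) that $p\,g_L$, with $p$ a polynomial and $g_L(\mathbf x)=\max\{0,1-\|\mathbf x\|^2\}^L$, equals a finite sum $\sum_{\ell,\alpha}c_{\ell,\alpha}T^{\alpha}g_{L+\ell}$ of Dunkl derivatives of radial bumps supported in $B(0,1)$, and (Lemma~\ref{lem:approx}, via mollification and Stone--Weierstrass) that the functions $p\,g_1$ are dense in $L^2(B(0,1),dw)$. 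Since $\tau_{\mathbf x}$ commutes with each $T^{\alpha}$, and $T^{\alpha}$ (a local derivative plus reflection differences) cannot enlarge the closed $G$-invariant set $\mathcal O(B(\mathbf x,1))$, the radial case gives $\supp \tau_{\mathbf x}(pg_1)(-\,\cdot)\subseteq \mathcal O(B(\mathbf x,1))$, and your own density reduction then finishes the proof. If you want to repair your outline, replace the finite-propagation hope by this radial-to-general transfer; it is precisely the mechanism that converts the convex hull of the orbit into the genuinely non-convex union of balls.
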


The conclusion of Theorem~\ref{teo:support} is known for $f$ being $L^2(dw)$-radial functions supported by $B(0,r)$ (see~\eqref{eq:support_radial}). Our aim is to extend it for functions  which are not necessary radial.  

Let us also note that the Theorem \ref{teo:support} gives much precise information about the support of translations than that which follows from~\cite[Theorem 5.1]{AAS-Colloq}. Actually their result implies that $\supp\, \tau_{\mathbf{x}}f(-\, \cdot) \subset \{\mathbf y\in \mathbb R^N: \| \mathbf x\|-r\leq \| \mathbf y\|\leq \| \mathbf x\|+r\}$ for $f\in L^2(dw)$, $\text{supp}\, f\subseteq B(0,r)$.

 \section{Preliminaries}
Dunkl theory is a generalization of Euclidean Fourier analysis. It started with the seminal article \cite{Dunkl} and developed extensively afterwards (see e.g.,\cite{RoeslerDeJeu}, \cite{Dunkl0}, \cite{Dunkl2}, \cite{Dunkl3}, \cite{GR}, \cite{Roesler2}, \cite{Roesler2003}, \cite{Roesle99}, \cite{ThangaveluXu}, \cite{Trimeche2002}). 
In this section we present basic facts concerning theory of the Dunkl operators.  For details we refer the reader to~\cite{Dunkl},~\cite{Roesler3}, and~\cite{Roesler-Voit}.

We consider the Euclidean space $\mathbb R^N$ with the scalar product $\langle\mathbf x,\mathbf y\rangle=\sum_{j=1}^N x_jy_j
$, $\mathbf x=(x_1,...,x_N)$, $\mathbf y=(y_1,...,y_N)$, and the norm $\| \mathbf x\|^2=\langle \mathbf x,\mathbf x\rangle$. For a nonzero vector $\alpha\in\mathbb R^N$ the reflection $\sigma_\alpha$ with respect to the hyperplane $\alpha^\perp$ orthogonal to $\alpha$ is given by
\begin{equation}\label{eq:reflection}
\sigma_\alpha\mathbf x=\mathbf x-2\frac{\langle \mathbf x,\alpha\rangle}{\| \alpha\| ^2}\alpha.
\end{equation}
A finite set $R\subset \mathbb R^N\setminus\{0\}$ is called a {\it root system} if $\sigma_\alpha (R)=R$ for every $\alpha\in R$. We shall consider normalized reduced root systems, that is, $\|\alpha\|^2=2$  for every $\alpha\in R$. The finite group $G$ generated by the reflections $\sigma_\alpha$ is called the {\it Weyl group} ({\it reflection group}) of the root system. A {\textit{multiplicity function}} is a $G$-invariant function $k:R\to\mathbb C$ which will be fixed and $\geq 0$  throughout this paper.

The number $\mathbf N$ is called the homogeneous dimension of the system, since
\begin{equation}\label{eq:homo} w(B(t\mathbf x, tr))=t^{\mathbf N}w(B(\mathbf x,r)) \ \ \text{\rm for } \mathbf x\in\mathbb R^N, \ t,r>0,
\end{equation}
where here and subsequently $B(\mathbf x, r)=\{\mathbf y\in\mathbb R^N:\|\mathbf x-\mathbf y\|\leq r\}$ denotes the (closed) Euclidean ball centered at $\mathbf x$ with radius $r>0$. Observe that 
\begin{equation}\label{eq:behavior} w(B(\mathbf x,r))\sim r^{N}\prod_{\alpha \in R} (|\langle \mathbf x,\alpha\rangle |+r)^{k(\alpha)},
\end{equation}
so $dw(\mathbf x)$ it is doubling, that is, there is a constant $C>0$ such that
\begin{equation}\label{eq:doubling} w(B(\mathbf x,2r))\leq C w(B(\mathbf x,r)) \ \ \text{\rm for } \mathbf x\in\mathbb R^N, \ r>0.
\end{equation}
Moreover, by \eqref{eq:behavior},
\begin{equation}\label{eq:growth}
C^{-1} \left(\frac{R}{r}\right)^{ N}\leq \frac{w(B(\mathbf x, R))}{w(B(\mathbf x, r))}\leq C \left(\frac{R}{r}\right)^{\mathbf N}\ \ \text{ for } 0<r<R.
\end{equation}

The {\it Dunkl operator} $T_\xi$  is the following $k$-deformation of the directional derivative $\partial_\xi$ by a  difference operator:
$$ T_\xi f(\mathbf x)= \partial_\xi f(\mathbf x) + \sum_{\alpha\in R} \frac{k(\alpha)}{2}\langle\alpha ,\xi\rangle\frac{f(\mathbf x)-f(\sigma_\alpha \mathbf x)}{\langle \alpha,\mathbf x\rangle}.$$
The Dunkl operators $T_{\xi}$, which were introduced in~\cite{Dunkl}, commute and are skew-symmetric with respect to the $G$-invariant measure $dw$.

Let $e_j$, $j=1,2,...,N$, denote the canonical orthonormal basis in $\mathbb R^N$ and let $T_j=T_{e_j}$.

For fixed $\mathbf y\in\mathbb R^N$ the {\it Dunkl kernel} $E(\mathbf x,\mathbf y)$ is the unique solution of the system
$$ T_\xi f=\langle \xi,\mathbf y\rangle f, \ \ f(0)=1.$$
In particular
\begin{equation}\label{eq:T_j_x} T_{j,\mathbf x} E(\mathbf x,\mathbf y)=y_jE(\mathbf x,\mathbf y),
\end{equation}
where here and subsequently  $T_{j,\mathbf x}$  denotes the action of $T_j$ with respect to the variable $\mathbf x$.
The function $E(\mathbf x ,\mathbf y)$, which generalizes the exponential  function
 $e^{\langle \mathbf x,\mathbf y\rangle}$, has a unique extension to a holomorphic function on $\mathbb C^N\times \mathbb C^N$. We have (see, e.g. \cite{Roesler3}, \cite{Roesler-Voit})
 \begin{itemize}
\item[$\bullet$]
$E(\lambda \mathbf x,\mathbf y)=E(\mathbf x,\lambda \mathbf y)=E(\lambda  \mathbf y,\mathbf x)=E(\lambda \sigma(\mathbf x),\sigma(\mathbf y))$ for all  $\mathbf x,\mathbf y\in \mathbb C^N$, $\sigma \in G$, and $\lambda \in \mathbb C$;
\item[$\bullet$]
$E(\mathbf x,\mathbf y)>0$ for all $\mathbf x,\mathbf y\in\mathbb R^N$;
\item[$\bullet$]
$|E(-i\mathbf x,\mathbf y)|\leq 1 $ for all $\mathbf x,\mathbf y\in\mathbb R^N$;
\item[$\bullet$]
$E(0,\mathbf y)=1$ for all $\mathbf y\in \mathbb C^N$.
\end{itemize}
Let us collect basic properties of the Dunkl transform $\mathcal F$ and the Dunkl translation ${\tau}_{\mathbf x}$ defined in \eqref{eq:DunklTransform} and  \eqref{eq:translation}

\begin{itemize}
\item[$\bullet$]
 $ \mathcal F(T_\zeta f)=i\langle \zeta , \cdot\rangle \mathcal Ff, \ \ \text{\rm and} \ T_\zeta (\mathcal Ff)=-i\mathcal F(\langle \zeta,\cdot\rangle f)$;

\item[$\bullet$]
 the Dunkl transform of a radial function is again a radial function; 

\item[$\bullet$]
 $ \mathcal F(f_\lambda)(\xi)=\mathcal F(\lambda \xi)$, where $f_\lambda (\mathbf x)=\lambda^{-\mathbf N} f(\lambda^{-1} \mathbf x)$, $\lambda>0$; 

\item[$\bullet$]
each translation $\tau_{\mathbf{x}}$ is a continuous linear map of $\mathcal{S}(\mathbb{R}^N)$ into itself, which extends to a contraction on $L^2({dw})$;
\item[$\bullet$]
(\textit{Identity\/})
$\tau_0=I$;
\item[$\bullet$]
(\textit{Symmetry\/})
$\tau_{\mathbf{x}}f(\mathbf{y})
=\tau_{\mathbf{y}}f(\mathbf{x})\text{ for all }\mathbf{x},\mathbf{y}\in\mathbb{R}^N, f\in\mathcal{S}(\mathbb{R}^N)$;
\item[$\bullet$]
(\textit{Scaling\/})
$\tau_{\mathbf{x}}(f_\lambda)=(\tau_{\lambda^{-1}\mathbf{x}}f)_\lambda \text{ for all }\lambda>0\,,\mathbf{x}
\in\mathbb{R}^N,\,f\in\mathcal{S}(\mathbb{R}^N)$;
\item[$\bullet$]
(\textit{Commutativity\/}) $T_\xi(\tau_{\mathbf x} f)=\tau_{\mathbf x} (T_\xi f)$;
\item[$\bullet$]
(\textit{Skew--symmetry\/})
\begin{equation*}\quad
\int_{\mathbb{R}^N}\!\tau_{\mathbf{x}}f(\mathbf{y})\,g(\mathbf{y})\,dw(\mathbf{y})=\int_{\mathbb{R}^N}f(\mathbf{y})\,\tau_{-\mathbf{x}}g(\mathbf{y})\,dw(\mathbf{y})
\text{ for all }\mathbf{x}\in\mathbb{R}^N,\,f,g\in\mathcal{S}(\mathbb{R}^N).
\end{equation*}
\end{itemize}

The latter formula allows us to define the Dunkl translations $\tau_{\mathbf{x}}f$ in the distributional sense for $f\in L^p({dw})$ with $1\leq p\leq \infty$. Further,
\begin{equation*}\quad
\int_{\mathbb{R}^N}\!\tau_{\mathbf{x}}f(\mathbf{y})\,dw(\mathbf{y})=\int_{\mathbb{R}^N}f(\mathbf{y})\,dw(\mathbf{y})
\text{ for all }\mathbf{x}\in\mathbb{R}^N,\,f\in\mathcal{S}(\mathbb{R}^N).
\end{equation*}

The following specific formula was obtained by R\"osler \cite{Roesler2003}: for the Dunkl translations of (reasonable) radial functions $f({\mathbf{x}})=\tilde{f}({\|\mathbf{x}\|})$:
\begin{equation}\label{eq:translation-radial}
\tau_{\mathbf{x}}f(-\mathbf{y})=\int_{\mathbb{R}^N}{(\tilde{f}\circ A)}(\mathbf{x},\mathbf{y},\eta)\,d\mu_{\mathbf{x}}(\eta)\text{ for all }\mathbf{x},\mathbf{y}\in\mathbb{R}^N.
\end{equation}
Here
\begin{equation*}
A(\mathbf{x},\mathbf{y},\eta)=\sqrt{{\|}\mathbf{x}{\|}^2+{\|}\mathbf{y}{\|}^2-2\langle \mathbf{y},\eta\rangle}=\sqrt{{\|}\mathbf{x}{\|}^2-{\|}\eta{\|}^2+{\|}\mathbf{y}-\eta{\|}^2}
\end{equation*}
and $\mu_{\mathbf{x}}$ is a probability measure, which is supported in $\operatorname{conv}\mathcal{O}(\mathbf{x})$. 

It is not hard to see that  $A(\mathbf x,\mathbf y,\eta)\geq d(\mathbf x,\mathbf y)$ for $\mathbf x,\mathbf y\in \mathbb R^N$ and $\eta \in  \operatorname{conv}\mathcal{O}(\mathbf{x})$, where here and subsequently, 
 \begin{equation}\label{eq:orbit_dist} d(\mathbf x,\mathbf y)=\inf_{\sigma \in G}
\| \sigma(\mathbf x)-\mathbf y\|
\end{equation}
denotes the distance of the orbits $\mathcal O(\mathbf x)$ and $\mathcal O(\mathbf y)$. Hence,~\eqref{eq:translation-radial} implies that if $f\in L^2(dw)$ is a radial function supported by $B(0,r)$, then 
\begin{equation}\label{eq:support_radial} 
 \supp \tau_{\mathbf{x}}f(-\, \cdot) \subseteq \mathcal{O} (B(\mathbf x,r)).  
 \end{equation}
We prove first \eqref{eq:support_radial} for radial  $C_c^\infty$-functions and then use a density argument and  continuity of the Dunkl translation on $L^2(dw)$.

{The \textit{Dunkl convolution\/} of two reasonable functions (for instance Schwartz functions) is defined by
$$
(f*g)(\mathbf{x})=c_k\,\mathcal{F}^{-1}[(\mathcal{F}f)(\mathcal{F}g)](\mathbf{x})=\int_{\mathbb{R}^N}(\mathcal{F}f)(\xi)\,(\mathcal{F}g)(\xi)\,E(\mathbf{x},i\xi)\,dw(\xi)\text{ for all }\mathbf{x}\in\mathbb{R}^N,
$$
or, equivalently, by}
$$
{(}f*g{)}(\mathbf{x})
=\int_{\mathbb{R}^N}f(\mathbf{y})\,\tau_{\mathbf{x}}g(-\mathbf{y})\,{dw}(\mathbf{y})
=\int f(\mathbf y) g(\mathbf x,\mathbf y)\, dw(\mathbf y),$$
where here and subsequently, we use the notation 
 \begin{equation}\label{eq:notation} g(\mathbf x,\mathbf y)=\tau_{\mathbf x}g(-\mathbf y)=\tau_{-\mathbf y}g(\mathbf x)
 \end{equation} for a reasonable function $g(\mathbf x)$ on $\mathbb R^N$. The last equality in~\eqref{eq:notation} follows by symmetry of the Dunkl translation. 

Let us collect some well-known formulae for the Dunkl translation and convolution.
\begin{equation}\label{eq:t1}
\|\tau_{\mathbf y} f\|_{L^2(dw)}\leq \| f\|_{L^2(dw)} \ \ \text{for} \ f\in L^2(dw),
\end{equation}
\begin{equation}\label{eq:t2}
\| f*g\|_{L^2(dw)}\leq \| f\|_{L^1(dw)}\| g\|_{L^2(dw)} \ \ \text{for}  \ f\in L^1(dw), \ g\in L^2(dw).
\end{equation}

If $g$ is a radial and continuous compactly supported, then
$\| \tau_{\mathbf y} g\|_{L^1(dw)}=\| g\|_{L^1(dw)}$ (see~\eqref{eq:translation-radial}), in particular the translation $\tau_{\mathbf y}g$ can be uniquely extended to radial $L^1(dw)$-functions. Hence one can give a sense for the convolution $f*g$, where $f\in L^p(dw)$ and $g\in L^1(dw)$ is radial.

The {\it Dunkl Laplacian} associated with $G$ and $k$  is the differential-difference operator $\Delta=\sum_{j=1}^N T_{j}^2$, which  acts on $C^2(\mathbb{R}^N)$ functions by
 $$ \Delta f(\mathbf x)=\Delta_{\rm eucl} f(\mathbf x)+\sum_{\alpha\in R} k(\alpha) \delta_\alpha f(\mathbf x),$$
$$\delta_\alpha f(\mathbf x)=\frac{\partial_\alpha f(\mathbf x)}{\langle \alpha , \mathbf x\rangle} - \frac{\|\alpha\|^2}{2} \frac{f(\mathbf x)-f(\sigma_\alpha \mathbf x)}{\langle \alpha, \mathbf x\rangle^2}.$$
Clearly, $\mathcal F(\Delta f)(\xi)=-\| \xi\|^2\mathcal Ff(\xi)$. The operator $\Delta$ is essentially self-adjoint on $L^2(dw)$  (see for instance \cite[Theorem\;3.1]{AH}) and generates the semigroup $e^{t\Delta}$  of linear self-adjoint contractions on $L^2(dw)$. The semigroup has the form
  \begin{equation}\label{eq:heat_semigroup}
  e^{t\Delta} f(\mathbf x)=\mathcal F^{-1}(e^{-t\|\xi\|^2}\mathcal Ff(\xi))(\mathbf x)=\int_{\mathbb R^N} h_t(\mathbf x,\mathbf y)f(\mathbf y)\, dw(\mathbf y),
  \end{equation}
  where the heat kernel $h_t(\mathbf x,\mathbf y)=\tau_{\mathbf x}h_t(-\mathbf y)$, $h_t(\mathbf x)=\mathcal F^{-1} (e^{-t\|\xi\|^2})(\mathbf x)=c_k^{-1} (2t)^{-\mathbf N\slash 2}e^{-\| \mathbf x\|^2\slash (4t)}$ is $C^\infty$ function of all variables $\mathbf x,\mathbf y \in \mathbb{R}^N$, $t>0$ and satisfies \begin{equation}\label{eq:symmetry} 0<h_t(\mathbf x,\mathbf y)=h_t(\mathbf y,\mathbf x),
  \end{equation}
 \begin{equation}\label{eq:heat_one} \int_{\mathbb R^N} h_t(\mathbf x,\mathbf y)\, dw(\mathbf y)=1.
 \end{equation}
 
\section{Properties of Dunkl translations - proof of Theorem \ref{teo:support}}
\subsection{Properties of translations of the Dunkl heat kernel}
We start this section by the list of further properties of the heat kernel.  
 Set
$$V(\mathbf x,\mathbf y,t)=\max (w(B(\mathbf x,t)),w(B(\mathbf y, t))).$$ 
The following estimates were proved in~\cite[Theorem 4.3]{conjugate}.
\begin{teo}
There are constants $C,c>0$ such that for all $\mathbf{x},\mathbf{y} \in \mathbb{R}^N$ and $t>0$ we have
\begin{equation}\label{eq:Gauss}
\left|h_t(\mathbf{x},\mathbf{y})\right|\leq C\,V(\mathbf{x},\mathbf{y},\!\sqrt{t\,})^{-1}\,e^{-\hspace{.25mm}c\hspace{.5mm}d(\mathbf{x},\mathbf{y})^2\slash t},
\end{equation}
\begin{equation}\label{eq:Holder}
\left|h_t(\mathbf{x},\mathbf{y})-h_t(\mathbf{x},\mathbf{y}')\right|\leq C\,\Bigl(\frac{{\|}\mathbf{y}\!-\!\mathbf{y}'{\|}}{\sqrt{t\,}}\Bigr)\,V(\mathbf{x},\mathbf{y},\!\sqrt{t\,})^{-1}\,e^{-\hspace{.25mm}c\hspace{.5mm}d(\mathbf{x},\mathbf{y})^2\slash t}.
\end{equation}
\end{teo}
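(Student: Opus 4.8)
The plan is to read both estimates off R\"osler's product formula \eqref{eq:translation-radial}, which applies because the function generating the kernel, $h_t(\mathbf x)=c_k^{-1}(2t)^{-\mathbf N/2}e^{-\|\mathbf x\|^2/(4t)}$, is radial. Substituting it into \eqref{eq:translation-radial} yields the representation
$$h_t(\mathbf x,\mathbf y)=c_k^{-1}(2t)^{-\mathbf N/2}\int_{\mathbb R^N}e^{-A(\mathbf x,\mathbf y,\eta)^2/(4t)}\,d\mu_{\mathbf x}(\eta),$$
where $\mu_{\mathbf x}$ is a probability measure carried by $\operatorname{conv}\mathcal O(\mathbf x)$. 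First I would reduce to $t=1$: writing $h_t=(h_1)_{\sqrt t}$ and using the scaling property $\tau_{\mathbf x}(f_\lambda)=(\tau_{\lambda^{-1}\mathbf x}f)_\lambda$ gives $h_t(\mathbf x,\mathbf y)=t^{-\mathbf N/2}h_1(\mathbf x/\sqrt t,\mathbf y/\sqrt t)$, while the homogeneity \eqref{eq:homo} gives $V(\mathbf x,\mathbf y,\sqrt t)=t^{\mathbf N/2}V(\mathbf x/\sqrt t,\mathbf y/\sqrt t,1)$ and $d(\mathbf x,\mathbf y)^2/t=d(\mathbf x/\sqrt t,\mathbf y/\sqrt t)^2$. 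Hence each of \eqref{eq:Gauss}, \eqref{eq:Holder} is scale invariant, and it suffices to treat $t=1$.

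Next I would extract the Gaussian factor. Using $A(\mathbf x,\mathbf y,\eta)\geq d(\mathbf x,\mathbf y)$ (recorded just before \eqref{eq:orbit_dist}) together with the elementary splitting $e^{-A^2/4}\leq e^{-d(\mathbf x,\mathbf y)^2/8}\,e^{-A^2/8}$, the representation gives
$$h_1(\mathbf x,\mathbf y)\leq C\,e^{-d(\mathbf x,\mathbf y)^2/8}\int_{\mathbb R^N}e^{-A(\mathbf x,\mathbf y,\eta)^2/8}\,d\mu_{\mathbf x}(\eta).$$
Thus \eqref{eq:Gauss} at $t=1$ reduces to the volume bound $\int e^{-A^2/8}\,d\mu_{\mathbf x}(\eta)\leq C\,V(\mathbf x,\mathbf y,1)^{-1}$, that is, to showing the integral is dominated both by $w(B(\mathbf x,1))^{-1}$ and by $w(B(\mathbf y,1))^{-1}$.

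This volume estimate is the main obstacle. The R\"osler measure $\mu_{\mathbf x}$ is not explicitly computable in general: all one knows is that it is a probability measure on $\operatorname{conv}\mathcal O(\mathbf x)$, on which $\|\eta\|\leq\|\mathbf x\|$ and hence $A^2=\|\mathbf x\|^2-\|\eta\|^2+\|\mathbf y-\eta\|^2\geq\|\mathbf y-\eta\|^2$. One therefore cannot evaluate the integral directly and must argue geometrically. I would run a case analysis on the relative sizes of $\|\mathbf x\|$, $\|\mathbf y\|$ and $d(\mathbf x,\mathbf y)$, estimate the $\mu_{\mathbf x}$-mass of the annular caps $\{\eta:\lambda\leq A(\mathbf x,\mathbf y,\eta)<\lambda+1\}$, sum $e^{-\lambda^2/8}$ against these masses, and compare the result with $w(B(\mathbf x,1))$ and $w(B(\mathbf y,1))$ through the explicit volume growth \eqref{eq:behavior} and the doubling and growth bounds \eqref{eq:doubling}, \eqref{eq:growth}. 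Producing the sharp reciprocal-volume factor $V^{-1}$, rather than a cruder $L^\infty$-type bound, is exactly the delicate point; it is the content of \cite[Theorem 4.3]{conjugate}.

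Finally, \eqref{eq:Holder} I would deduce from \eqref{eq:Gauss} and a gradient estimate obtained by differentiating the representation in $\mathbf y$. Since $\partial_{y_j}A^2=2(y_j-\eta_j)$ and $\|\mathbf y-\eta\|\leq A$, one finds $|\nabla_{\mathbf y}(e^{-A^2/(4t)})|\leq (2t)^{-1}A\,e^{-A^2/(4t)}\leq C\,t^{-1/2}e^{-A^2/(8t)}$, so the same volume argument yields $|\nabla_{\mathbf y}h_t(\mathbf x,\mathbf y)|\leq C\,t^{-1/2}V(\mathbf x,\mathbf y,\sqrt t)^{-1}e^{-c\,d(\mathbf x,\mathbf y)^2/t}$. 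When $\|\mathbf y-\mathbf y'\|<\sqrt t$ I would integrate this gradient bound along the segment $[\mathbf y,\mathbf y']$, using $|d(\mathbf x,\mathbf y)-d(\mathbf x,\mathbf z)|\leq\|\mathbf y-\mathbf z\|$ and \eqref{eq:doubling} to see that $d$ and $V$ vary only boundedly along the segment; when $\|\mathbf y-\mathbf y'\|\geq\sqrt t$ the prefactor $\|\mathbf y-\mathbf y'\|/\sqrt t$ is at least $1$, and \eqref{eq:Holder} follows by applying \eqref{eq:Gauss} to each of the two terms separately.
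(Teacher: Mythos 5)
The first thing to note is that the paper does not prove this theorem at all: it is quoted from \cite[Theorem 4.3]{conjugate}, so the relevant comparison is with the proof given there. Your skeleton --- R\"osler's formula \eqref{eq:translation-radial} applied to the radial profile of $h_t$, the scaling reduction to $t=1$, and the extraction of the Gaussian factor from $A(\mathbf x,\mathbf y,\eta)\geq d(\mathbf x,\mathbf y)$ --- is indeed how that proof begins. But at the one step that carries all the content you argue circularly: you reduce \eqref{eq:Gauss} to the bound $\int e^{-A^2/8}\,d\mu_{\mathbf x}\leq C\,V(\mathbf x,\mathbf y,1)^{-1}$ and then declare it to be ``the content of \cite[Theorem 4.3]{conjugate}'', i.e.\ of the theorem being proved. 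Moreover, the only properties of $\mu_{\mathbf x}$ you allow yourself (a probability measure supported in $\operatorname{conv}\mathcal O(\mathbf x)$) are provably insufficient for the volume gain: they are consistent with $\mu_{\mathbf x}$ having an atom at a point where $A=d(\mathbf x,\mathbf y)$ (for $k=0$ one has exactly $\mu_{\mathbf x}=\delta_{\mathbf x}$), in which case your integral is comparable to $e^{-d(\mathbf x,\mathbf y)^2/8}$ and one only obtains $h_t(\mathbf x,\mathbf y)\leq C t^{-\mathbf N/2}e^{-c\,d(\mathbf x,\mathbf y)^2/t}$; by \eqref{eq:behavior}, $t^{-\mathbf N/2}$ exceeds $V(\mathbf x,\mathbf y,\sqrt t)^{-1}$ by the unbounded factor $\prod_{\alpha\in R}(1+|\langle\mathbf x,\alpha\rangle|/\sqrt t)^{k(\alpha)}$, up to constants. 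What is missing is a quantitative non-concentration estimate for $\mu_{\mathbf x}$ relative to $dw$ --- in \cite{conjugate} this is the substance of the Section~3 analysis of translations of radial functions, the same estimates behind \cite[Corollary 3.10]{conjugate} which the present paper invokes elsewhere --- and your ``case analysis on annular caps'' names the strategy without supplying any tool for estimating the $\mu_{\mathbf x}$-mass of a cap. So the proposal has a genuine gap exactly where you flagged it.

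For \eqref{eq:Holder}, your gradient-plus-segment argument is sound in the regime $\|\mathbf y-\mathbf y'\|\leq\sqrt t$ (modulo the same missing volume bound): indeed $\nabla_{\mathbf y}A^2=2(\mathbf y-\eta)$ with $\|\mathbf y-\eta\|\leq A$ on $\operatorname{conv}\mathcal O(\mathbf x)$, and $d(\mathbf x,\cdot)$ and $V(\mathbf x,\cdot,\sqrt t)$ vary only boundedly along the segment by doubling. But your disposal of the case $\|\mathbf y-\mathbf y'\|\geq\sqrt t$ fails: applying \eqref{eq:Gauss} to $h_t(\mathbf x,\mathbf y')$ gives $V(\mathbf x,\mathbf y',\sqrt t)^{-1}e^{-c\,d(\mathbf x,\mathbf y')^2/t}$, which is not dominated by the right-hand side of \eqref{eq:Holder} when $\mathbf y'$ lies near $\mathcal O(\mathbf x)$ and $\mathbf y$ is far. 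In fact the unrestricted inequality \eqref{eq:Holder} is false already for $k=0$: take $\mathbf y'=\mathbf x$ and let $\|\mathbf x-\mathbf y\|/\sqrt t\to\infty$; the left-hand side tends to $h_t(\mathbf x,\mathbf x)\simeq t^{-N/2}$ while the right-hand side tends to $0$. The estimate must carry (and in \cite[Theorem 4.3]{conjugate} does carry) the hypothesis $\|\mathbf y-\mathbf y'\|\leq\sqrt t$, which the quotation above silently drops --- harmlessly for the paper, since its only use, in \eqref{eq:Holder_tilde}, is trivial when $\|\mathbf y-\mathbf y'\|\geq1$ because each kernel has $L^1(dw)$-norm one. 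The correct fix is therefore to prove \eqref{eq:Holder} only for $\|\mathbf y-\mathbf y'\|\leq\sqrt t$, where your argument is the right one, rather than to attempt the complementary case.
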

Clearly, $\tau_{\mathbf x}\tau_{\mathbf y} h_t(-\mathbf z)$ is $C^\infty$-function of $\mathbf x,\mathbf y,\mathbf z\in \mathbb R^N$ and $t>0$. The following formula is a direct consequence of \eqref{eq:Gauss}.
\begin{equation}\label{eq:hL^2}
\| h_t(\mathbf x,\mathbf y)\|_{L^2(dw(\mathbf y))}\leq \frac{C}{w(B(\mathbf x, \sqrt{t}))^{1\slash 2}}.
\end{equation}
We have 
\begin{equation}\label{eq:h_infty}
|\tau_{\mathbf x}\tau_{\mathbf y}  h_t(-\mathbf z)|\leq C\ w(B(\mathbf x, \sqrt{t}))^{-1\slash 2}w(B(\mathbf y,\sqrt{t}))^{-1\slash 2},
\end{equation}
\begin{equation}\label{eq:IntegralE}
\int_{B(0, 1\slash t)} |E(i\xi,\mathbf x)|^2\, dw(\xi)\leq \frac{C}{w(B(\mathbf x,t))}.
\end{equation}

\begin{proof}[Proof of~\eqref{eq:h_infty}]
By Cauchy--Schwarz inequality, Plancharel theorem for the Dunkl transform $\mathcal{F}$, and~\eqref{eq:hL^2} we have
\begin{align*}
|\tau_{\mathbf{x}}\tau_{\mathbf{y}}h_{t}(-\mathbf{z})|&=c_k^{-1}\left|\int E(i\xi,\mathbf{x})E(i\xi,\mathbf{y})E(-i\xi,\mathbf{z})e^{-t\|\xi\|^2}\,dw(\xi)\right| \\&\leq c_{k}^{-1}\left(\int |E(i\xi,\mathbf{x})|^2e^{-t\|\xi\|^{2}}\,dw(\xi)\right)^{1/2}\left(\int |E(i\xi,\mathbf{y})|^2e^{-t\|\xi\|^{2}}\,dw(\xi)\right)^{1/2}\\&=\| h_t(\mathbf x,\mathbf z)\|_{L^2(dw(\mathbf z))}\| h_t(\mathbf y,\mathbf z)\|_{L^2(dw(\mathbf z))}\leq C\ w(B(\mathbf x, \sqrt{t}))^{-1\slash 2}w(B(\mathbf y,\sqrt{t}))^{-1\slash 2}.
\end{align*}
\end{proof}
\begin{proof}[Proof of~\eqref{eq:IntegralE}]
By~\eqref{eq:hL^2} and Plancherel theorem for the Dunkl transform $\mathcal F$ we get  
\begin{align*}
\frac{C}{w(B(\mathbf{x},t))^{1/2}} \geq \|h_{t^2}(\mathbf{x},\cdot)\|_{L^2(dw)}
=\|E(i\cdot,\mathbf{x})e^{-t^2\|\cdot\|^2}\|_{L^2(dw)}
\geq e^{-1}\|E(i\cdot,\mathbf{x})\|_{L^2(B(0,t^{-1}),dw)}.
\end{align*}\end{proof}

\begin{propo} If $m$ is a bounded function supported by $B(0,1\slash t)$, then $(\mathcal{F}^{-1}m)(\mathbf x,\mathbf y)$ is a $C^\infty$-function of $\mathbf x,\mathbf y\in\mathbb R^N$ which satisfies
\begin{equation}\label{eq:double_ball}
| (\mathcal{F}^{-1}m)(\mathbf x,\mathbf y)|\leq C \| m\|_{L^\infty} w(B(\mathbf x,t))^{-1\slash 2} w(B(\mathbf y, t))^{-1\slash 2}.
\end{equation}
\end{propo}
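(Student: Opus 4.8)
The plan is to write $(\mathcal{F}^{-1}m)(\mathbf{x},\mathbf{y})$ as an explicit integral against the Dunkl kernel and then split it by the Cauchy--Schwarz inequality so that \eqref{eq:IntegralE} can be applied separately in the variable $\mathbf x$ and in the variable $\mathbf y$. First I would unravel the notation \eqref{eq:notation}: by definition $(\mathcal{F}^{-1}m)(\mathbf{x},\mathbf{y})=\tau_{\mathbf{x}}(\mathcal{F}^{-1}m)(-\mathbf{y})$. Since $m$ is bounded and compactly supported it lies in $L^1(dw)\cap L^2(dw)$, so the defining formula \eqref{eq:translation} for the translation is legitimate and, using $\mathcal{F}(\mathcal{F}^{-1}m)=m$ together with the identity $E(i\xi,-\mathbf{y})=E(-i\xi,\mathbf{y})$ coming from $E(-\mathbf z,\mathbf w)=E(\mathbf z,-\mathbf w)$, gives
\[
(\mathcal{F}^{-1}m)(\mathbf{x},\mathbf{y})=c_k^{-1}\int E(i\xi,\mathbf{x})\,E(-i\xi,\mathbf{y})\,m(\xi)\,dw(\xi).
\]
Because $m$ is supported in $B(0,1/t)$, the integrand is supported in $B(0,1/t)$ and is dominated there by $\|m\|_{L^\infty}$ times a product of bounded kernel factors, so the integral converges absolutely.

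Next I would estimate. Pulling out $\|m\|_{L^\infty}$ and applying the Cauchy--Schwarz inequality in $L^2(B(0,1/t),dw)$ yields
\[
|(\mathcal{F}^{-1}m)(\mathbf{x},\mathbf{y})|\leq c_k^{-1}\|m\|_{L^\infty}\Bigl(\int_{B(0,1/t)}|E(i\xi,\mathbf{x})|^2\,dw(\xi)\Bigr)^{1/2}\Bigl(\int_{B(0,1/t)}|E(-i\xi,\mathbf{y})|^2\,dw(\xi)\Bigr)^{1/2}.
\]
Since $E(-i\xi,\mathbf{y})=\overline{E(i\xi,\mathbf{y})}$ for real $\xi,\mathbf{y}$ (the Taylor coefficients of the entire kernel are real), one has $|E(-i\xi,\mathbf{y})|=|E(i\xi,\mathbf{y})|$, so the second factor equals the corresponding $L^2$-integral of $|E(i\xi,\mathbf{y})|^2$. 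Now \eqref{eq:IntegralE}, applied with the radius $t$, bounds the two integrals by $C\,w(B(\mathbf{x},t))^{-1}$ and $C\,w(B(\mathbf{y},t))^{-1}$ respectively; multiplying the square roots produces exactly the bound \eqref{eq:double_ball}.

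Finally, for smoothness I would differentiate under the integral sign. As $E(\mathbf z,\mathbf w)$ extends to a holomorphic function on $\mathbb C^N\times\mathbb C^N$, the maps $\mathbf x\mapsto E(i\xi,\mathbf x)$ and $\mathbf y\mapsto E(-i\xi,\mathbf y)$ are $C^\infty$ with all partial derivatives continuous and, for $\xi$ ranging over the compact set $B(0,1/t)$, bounded uniformly in $\xi$ (locally uniformly in $\mathbf x,\mathbf y$). Hence every partial derivative of the integrand is dominated by a constant (depending on the compact parameter range) times $|m(\xi)|\mathbf 1_{B(0,1/t)}(\xi)\in L^1(dw)$, justifying differentiation under the integral to any order and giving that $(\mathcal{F}^{-1}m)(\mathbf{x},\mathbf{y})$ is $C^\infty$ in $(\mathbf{x},\mathbf{y})$. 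I expect no serious obstacle here; the only points needing care are the sign/conjugation bookkeeping for the Dunkl kernel (so that the $\mathbf y$-integral matches the form of \eqref{eq:IntegralE}) and checking that the compactness of $\supp m$ together with the local boundedness of the derivatives of $E$ legitimizes differentiation under the integral.
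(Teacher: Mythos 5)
Your proof is correct and follows essentially the same route as the paper: the same integral representation of $(\mathcal{F}^{-1}m)(\mathbf{x},\mathbf{y})$ against the Dunkl kernel, the same Cauchy--Schwarz splitting over $B(0,1/t)$, and the same application of \eqref{eq:IntegralE} in each variable, with your conjugation bookkeeping $|E(-i\xi,\mathbf{y})|=|E(i\xi,\mathbf{y})|$ being the implicit identification the paper also makes. Your extra paragraph justifying the $C^\infty$ claim by differentiation under the integral (using compactness of $\supp m$ and local boundedness of derivatives of the holomorphic kernel) is a sound filling-in of a detail the paper leaves unproved.
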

\begin{proof}
By the Cauchy--Schwarz inequality and~\eqref{eq:IntegralE},
\begin{align*}
\begin{split}
\Big|(\mathcal{F}^{-1}m)(\mathbf x,\mathbf y)\Big|
&=c_k^{-1}\Big|\int_{B(0,1/t)} m(\xi)E(-i\xi,\mathbf{x})E(i\xi,\mathbf{y})\,dw(\xi)\Big|\\
&\leq c_k^{-1} \|m\|_{L^{\infty}}\|E(i\cdot,\mathbf{x})\|_{L^2(B(0,1/t),dw)}\|E(i\cdot,\mathbf{y})\|_{L^2(B(0,1/t),dw)}\\&\leq C\|m\|_{L^{\infty}}w(B(\mathbf{x},t))^{-1/2}w(B(\mathbf{y},t))^{-1/2}.
\end{split}
\end{align*}
\end{proof}

\subsection{Support of translations of compactly supported functions}
 Suppose that $f,g \in C^1(\mathbb{R}^N)$ and $g$ is radial. The following Leibniz rule can be confirmed by a direct calculation:
\begin{equation}\label{eq:Leibniz}
T_{j}(f g)=f(T_j g)+g(T_j f) \text{ for all }1 \leq j \leq N.
\end{equation}
Let us denote the set of all polynomials of degree $d \geq 0$ by $\mathbb{P}_d$. 
\begin{propo}[{\cite[Lemma 2.6]{Roesler-Voit}}]\label{propo:polynomial_degree}
Let $p \in \mathbb{P}_d$ and $1 \leq j \leq N$. Then $T_jp \in \mathbb{P}_{d-1}$.
\end{propo}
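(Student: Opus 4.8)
The plan is to split the Dunkl operator $T_j=T_{e_j}$ into its differential part $\partial_j$ and its reflection (difference) part, and to check that each sends $\mathbb{P}_d$ into $\mathbb{P}_{d-1}$. For the differential part there is nothing to do: differentiating a polynomial of degree at most $d$ in the $j$-th direction yields a polynomial of degree at most $d-1$, so $\partial_j p\in\mathbb{P}_{d-1}$. All the content is in the difference quotients
$$q_\alpha(\mathbf x)=\frac{p(\mathbf x)-p(\sigma_\alpha\mathbf x)}{\langle\alpha,\mathbf x\rangle},\qquad \alpha\in R.$$

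First I would note that, since $\sigma_\alpha$ is an orthogonal linear map, the composition $p\circ\sigma_\alpha$ is again a polynomial of degree at most $d$; hence the numerator $n_\alpha(\mathbf x)=p(\mathbf x)-p(\sigma_\alpha\mathbf x)$ lies in $\mathbb{P}_d$. The decisive observation is that $n_\alpha$ vanishes on the reflecting hyperplane $\alpha^\perp=\{\mathbf x:\langle\alpha,\mathbf x\rangle=0\}$, because $\sigma_\alpha\mathbf x=\mathbf x$ there. From this I would deduce that the linear form $\langle\alpha,\cdot\rangle$ divides $n_\alpha$ in the polynomial ring, so that $q_\alpha$ is a genuine polynomial and, being the quotient of a polynomial of degree at most $d$ by a degree-one form, satisfies $q_\alpha\in\mathbb{P}_{d-1}$.

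To make the divisibility precise I would perform a linear change of variables: pick an orthogonal transformation $O$ carrying $\alpha/\|\alpha\|$ to $e_1$ and set $\mathbf u=O\mathbf x$, so that $\langle\alpha,\mathbf x\rangle=\sqrt 2\,u_1$. In the $\mathbf u$-coordinates $n_\alpha$ becomes a polynomial vanishing identically on $\{u_1=0\}$; writing it as a polynomial in $u_1$ with coefficients depending on $u_2,\dots,u_N$ and setting $u_1=0$ shows that the term independent of $u_1$ is zero, so $u_1$ factors out. This is just the one-variable factor theorem applied in $u_1$ with the remaining variables as parameters, and it returns $n_\alpha/\langle\alpha,\cdot\rangle$ as a polynomial of degree at most $d-1$.

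Finally I would assemble the pieces. By the definition of the Dunkl operator,
$$T_j p(\mathbf x)=\partial_j p(\mathbf x)+\sum_{\alpha\in R}\frac{k(\alpha)}{2}\,\langle\alpha,e_j\rangle\,q_\alpha(\mathbf x),$$
a finite linear combination of elements of $\mathbb{P}_{d-1}$, whence $T_j p\in\mathbb{P}_{d-1}$. The only genuine subtlety, and the step I would be most careful about, is the \emph{exactness} of the division by $\langle\alpha,\cdot\rangle$: one must rule out a pole of the quotient along $\alpha^\perp$ and confirm that it is a true polynomial rather than a rational function. The vanishing-on-the-hyperplane property together with the reduction to the one-dimensional factor theorem settles this cleanly, and everything else is degree bookkeeping.
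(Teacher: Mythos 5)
Your proof is correct and complete. Note that the paper itself does not prove this proposition --- it is quoted from R\"osler--Voit \cite[Lemma 2.6]{Roesler-Voit} --- so there is no internal proof to compare against; your argument is a self-contained version of the standard one. The decomposition into $\partial_j$ plus the difference terms, the observation that $n_\alpha=p-p\circ\sigma_\alpha$ vanishes on $\alpha^{\perp}$, and the exact division by the linear form $\langle\alpha,\cdot\rangle$ via an orthogonal change of variables and the one-variable factor theorem (valid since $\mathbb{R}$ is infinite, so the coefficient of $u_1^0$ vanishes identically) are all sound, and the degree bookkeeping is right: a polynomial of degree at most $d$ divided exactly by a degree-one form has degree at most $d-1$. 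Two small points worth making explicit: the claim requires reading $\mathbb{P}_d$ as polynomials of degree \emph{at most} $d$ (with $\mathbb{P}_{-1}=\{0\}$, covering constants), which is the convention you implicitly and correctly adopt; and the pointwise quotient $q_\alpha$, a priori defined only off $\alpha^{\perp}$, agrees there with the polynomial quotient, so the two coincide as the continuous extension --- exactly the pole-free issue you flag and resolve.
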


Let $L$ be a positive integer. Let us denote
\begin{align*}
g_{L}(\mathbf{x})=\max\{0,(1-\|\mathbf{x}\|^2)\}^{L}.
\end{align*}
The function $g_{L}$ is radial, belongs to $C^{L-1}(\mathbb{R}^N)$, and $\supp g_L\subseteq B(0,1)$. 

For $\mathbf{\alpha}=(\alpha_1,\alpha_2,\ldots,\alpha_N) \in \mathbb N_0^N=(\mathbb{N} \cup \{0\})^{N} $ we define
\begin{align*}
T_j^{0}=I, \ \ T^{\alpha}:=T_{1}^{\alpha_1} \circ T_2^{\alpha_2} \circ \ldots \circ T_{N}^{\alpha_N}.
\end{align*}
Clearly, $\text{supp}\, T^\alpha g_L\subseteq B(0,1)$ for $\| \alpha\|<L$, where $\| \alpha \|=\sum_{j=1}^N \alpha_j$. 
\begin{lema}\label{lem:representation}
Let $L \in \mathbb{N}$ and $p$ be a polynomial of degree $d$. Then $pg_{L}$ can be written in the form
\begin{equation}\label{eg:polynomial_form}
p(\mathbf{x})g_{L}(\mathbf{x})=\sum_{\ell=0}^{d}\sum_{\|\alpha\| \leq \ell}c_{\ell,\alpha}T^{\alpha}(g_{L+\ell})(\mathbf{x})
\end{equation}
for some $c_{\ell,\alpha} \in \mathbb{C}$.
\end{lema}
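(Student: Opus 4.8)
The plan is to argue by induction on the degree $d$ of $p$, keeping the statement quantified over all $L\in\mathbb{N}$ simultaneously, so that at each step the inductive hypothesis is available with $L+1$ in place of $L$. The base case $d=0$ is trivial: a constant $p\equiv c$ gives $p\,g_L=c\,T^{0}(g_L)$, which is \eqref{eg:polynomial_form} with $\ell=0$.

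The computational engine is the identity $T_j g_{L+1}=-2(L+1)\,x_j\,g_L$, valid for $1\le j\le N$ and $L\ge 1$. Indeed, $g_{L+1}$ is radial, so the difference part of $T_j$ annihilates it and $T_j g_{L+1}=\partial_j g_{L+1}$; differentiating $\max\{0,(1-\|\mathbf{x}\|^2)\}^{L+1}$ on $\{\|\mathbf{x}\|<1\}$ produces the claimed multiple of $x_j g_L$, while both sides vanish where $\|\mathbf{x}\|>1$. The regularity $g_{L+1}\in C^{L}$ with $L\ge 1$ makes all of the differentiations below legitimate.

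For the inductive step I would first write $p(\mathbf{x})=p(0)+\sum_{j=1}^N x_j p_j(\mathbf{x})$ with each $p_j$ a polynomial of degree $\le d-1$; the constant term contributes $p(0)g_L$, an $\ell=0$ term already of the required shape. For each remaining summand I apply the Leibniz rule \eqref{eq:Leibniz} to $p_j g_{L+1}$ and insert the engine identity to obtain
\begin{equation*}
x_j p_j g_L=-\frac{1}{2(L+1)}\Bigl(T_j(p_j g_{L+1})-g_{L+1}\,T_j p_j\Bigr).
\end{equation*}
Now I invoke the inductive hypothesis at level $L+1$: the factor $p_j$ has degree $\le d-1$, while by Proposition~\ref{propo:polynomial_degree} the factor $T_j p_j$ has degree $\le d-2$, so both $p_j g_{L+1}$ and $g_{L+1}\,T_j p_j$ are governed by a strictly smaller degree and expand as combinations of $T^{\beta}(g_{(L+1)+\ell'})$ with $\|\beta\|\le\ell'$. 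In the first term the outer $T_j$ turns $T^{\beta}(g_{(L+1)+\ell'})$ into $T^{\beta+e_j}(g_{(L+1)+\ell'})$, where I use that the Dunkl operators commute so that $T_j\circ T^{\beta}=T^{\beta+e_j}$ in the canonical order; setting $\ell=\ell'+1$ gives $g_{(L+1)+\ell'}=g_{L+\ell}$ and $\|\beta+e_j\|=\|\beta\|+1\le\ell'+1=\ell\le d$, exactly the constraint in \eqref{eg:polynomial_form}. The second term carries no outer operator, and the same substitution $\ell=\ell'+1$ places it in the admissible range $\|\beta\|\le\ell'=\ell-1\le\ell$ with $\ell\le d-1$. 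Collecting all contributions by linearity produces the coefficients $c_{\ell,\alpha}$.

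The explicit derivative in the engine identity and the index bookkeeping are routine. The only conceptual point, and the step I expect to require the most care, is to set up the induction so that each passage from exponent $L$ to $L+1$ is absorbed by a hypothesis that is uniform in $L$, and to deploy the commutativity of the $T_j$ so that the iterated operators stay in the canonical order $T^{\alpha}=T_1^{\alpha_1}\circ\cdots\circ T_N^{\alpha_N}$ throughout the reduction.
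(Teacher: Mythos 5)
Your proof is correct and takes essentially the same route as the paper's: induction on $\deg p$ with the hypothesis kept uniform in $L$, the Leibniz rule \eqref{eq:Leibniz} combined with the identity $T_j g_{L+1}=-2(L+1)x_j g_L$ to trade $x_j p\, g_L$ for $T_j(p\,g_{L+1})$ and $g_{L+1}T_jp$, and commutativity of the Dunkl operators to absorb the outer $T_j$ into $T^{\alpha+e_j}$. (Your sign in the engine identity is the correct one; the paper's display \eqref{eq:Leibniz_application} carries an inconsequential sign typo, as the constants are absorbed into the $c_{\ell,\alpha}$ anyway.)
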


\begin{proof}
The proof is by induction on $d$. The claim for $d=0$ is obvious. Let us assume that for any polynomial  $p(\mathbf x)$ of degree at most $d$ and any positive integer $L$ the function $p(\mathbf x)g_L(\mathbf x)$ can be written in the form~\eqref{eg:polynomial_form}. We will prove the claim for any polynomial $q(\mathbf x)$ of degree $d+1$ and any $L \in \mathbb{N}$. By linearity, it is enough to prove the claim for $q(\mathbf{x})=x_jp(\mathbf{x})$  with $1 \leq j \leq N$ and $p \in \mathbb{P}_d$. Since $g_{L+1} \in C^1(\mathbb{R}^N)$ is radial and $p \in C^1(\mathbb{R}^N)$, by~\eqref{eq:Leibniz} we have
\begin{equation}\label{eq:Leibniz_application}
\begin{split}
T_{j}(pg_{L+1})(\mathbf{x})&=p(\mathbf{x})T_{j}g_{L+1}(\mathbf{x})+g_{L+1}(\mathbf{x})T_{j}p(\mathbf{x})\\&=2(L+1)x_jp(\mathbf{x})g_{L}(\mathbf{x})+g_{L+1}(\mathbf{x})T_{j}p(\mathbf{x}).
\end{split}
\end{equation}
By Proposition~\ref{propo:polynomial_degree} we have $T_{j}p \in \mathbb{P}_{d-1}$, so, by the induction hypothesis,
\begin{align*}
g_{L+1}(\mathbf{x})T_{j}p(\mathbf{x})&=\sum_{\ell=0}^{d-1}\sum_{\|\alpha\| \leq \ell}c_{\ell,\alpha}T^{\alpha}(g_{L+1+\ell})(\mathbf{x})=\sum_{\ell=0}^{d+1}\sum_{\|\alpha\| \leq \ell}c_{\ell,\alpha}'T^{\alpha}(g_{L+\ell})(\mathbf{x}).
\end{align*}
Therefore, by~\eqref{eq:Leibniz_application}, it is enough to check that $T_{j}(pg_{L+1})$ can be written in the form~\eqref{eg:polynomial_form}.  Since $p \in \mathbb{P}_d$, by the induction hypothesis
\begin{align*}
p(\mathbf{x})g_{L+1}(\mathbf{x})=\sum_{\ell=0}^{d}\sum_{\|\alpha\| \leq \ell}d_{\ell,\alpha}T^{\alpha}(g_{L+1+\ell})(\mathbf{x}),
\end{align*}
therefore
\begin{align*}
T_j(pg_{L+1})(\mathbf{x})&=\sum_{\ell=0}^{d}\sum_{\|\alpha\| \leq \ell}d_{\ell,\alpha}T_j \circ T^{\alpha}(g_{L+1+\ell})(\mathbf{x})\\&=\sum_{\ell=0}^{d}\sum_{\|\alpha\| \leq \ell}d_{\ell,\alpha}T^{\alpha+e_j}(g_{L+1+\ell})(\mathbf{x})=\sum_{\ell=0}^{d+1}\sum_{\|\alpha\| \leq \ell}d_{\ell,\alpha}'T^{\alpha}(g_{L+\ell})(\mathbf{x}).
\end{align*}
\end{proof}

\begin{lema}\label{lem:approx}
The set $\bigcup_{d \in \mathbb{N}_0}\bigcup_{p \in \mathbb{P}_d}\{p(\cdot)g_1(\cdot)\}$ is dense in $L^2(B(0,1),dw)$.
\end{lema}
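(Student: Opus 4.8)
The plan is to argue by duality in the Hilbert space $L^2(B(0,1),dw)$, reducing the assertion to two ingredients: the density of ordinary polynomials, and the fact that $g_1$ is strictly positive $dw$-almost everywhere on the ball. Note first that the set $\mathcal V=\bigcup_{d\in\mathbb N_0}\bigcup_{p\in\mathbb P_d}\{p(\cdot)g_1(\cdot)\}$ is a linear subspace, so its closure is all of $L^2(B(0,1),dw)$ precisely when its orthogonal complement is trivial.

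First I would record the elementary geometry of $g_1$. On $B(0,1)$ we have $g_1(\mathbf x)=1-\|\mathbf x\|^2$, hence $0\leq g_1\leq 1$ there, and the zero set $\{g_1=0\}\cap B(0,1)$ is exactly the sphere $\{\|\mathbf x\|=1\}$, which has Lebesgue measure zero and therefore $dw$-measure zero, since $dw$ is absolutely continuous with respect to $d\mathbf x$. In particular $g_1>0$ for $dw$-almost every $\mathbf x\in B(0,1)$, and multiplication by the bounded factor $g_1$ maps $L^2(B(0,1),dw)$ into itself.

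Next I would establish the auxiliary claim that polynomials are dense in $L^2(B(0,1),dw)$. The weight $\prod_{\alpha\in R}|\langle\mathbf x,\alpha\rangle|^{k(\alpha)}$ is continuous, hence bounded on the compact ball $\overline{B(0,1)}$, so $dw$ restricted to $B(0,1)$ is a finite Borel measure on a compact metric space; consequently $C(\overline{B(0,1)})$ is dense in $L^2(B(0,1),dw)$. The polynomials form a subalgebra of $C(\overline{B(0,1)})$ that contains the constants and separates points, so by the Stone--Weierstrass theorem they are uniformly dense in $C(\overline{B(0,1)})$; since $dw$ is finite, uniform convergence implies $L^2(dw)$-convergence, and the auxiliary claim follows.

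Finally I would combine these. Let $h\in L^2(B(0,1),dw)$ be orthogonal to every element of $\mathcal V$, and set $H=h\,g_1$, which lies in $L^2(B(0,1),dw)$ by the second paragraph. Since $g_1$ is real, for every polynomial $p$ we have $\int_{B(0,1)}H\,\overline{p}\,dw=\int_{B(0,1)}h\,\overline{p\,g_1}\,dw=0$, so $H$ is orthogonal to all polynomials; by the auxiliary density this forces $H=0$ in $L^2(dw)$, i.e. $h\,g_1=0$ for $dw$-a.e. $\mathbf x$. As $g_1>0$ $dw$-a.e. on $B(0,1)$, we conclude $h=0$, proving that $\mathcal V^\perp=\{0\}$ and hence that $\mathcal V$ is dense. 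I expect the only genuine content to be the auxiliary polynomial density; after that the passage to $\{p\,g_1\}$ is a one-line orthogonality argument, and the only points requiring care are that $g_1$ is bounded (keeping us in $L^2(dw)$) and that its zero set is $dw$-negligible, both immediate from the explicit form of $g_1$ and the absolute continuity of $dw$.
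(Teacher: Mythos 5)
Your proof is correct, but it takes a genuinely different route from the paper. The paper argues constructively: it truncates $f$ to $B(0,1-\delta)$, mollifies by a radial $C_c^\infty$ approximate identity $\Phi_t$ in the Dunkl convolution, uses the support result for translations of radial functions to keep $\supp \Phi_t*(f\chi_{B(0,1-\delta)})$ inside $B(0,1-\delta/2)$, divides by $g_1$ (which is bounded below there), and only then invokes Stone--Weierstrass to approximate the quotient $h$, so that $pg_1$ approximates $f$ in three explicit steps. You instead run a Hilbert-space duality argument: since $\mathcal V=\{pg_1\}$ is a linear subspace, density is equivalent to $\mathcal V^\perp=\{0\}$, which you reduce to (i) density of polynomials in $L^2(B(0,1),dw)$ (finite Borel measure on a compact set, continuous functions dense, Stone--Weierstrass) and (ii) $g_1>0$ except on the sphere, a $dw$-null set by absolute continuity of $dw$. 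Your argument is shorter and uses none of the Dunkl machinery --- no approximate identity, no support theorem for radial translations --- so it exposes the lemma as a purely weighted-$L^2$ fact valid for any finite absolutely continuous measure; the paper's proof buys explicit approximants (and stays within the convolution toolkit it has just developed) at the cost of more moving parts. The only point worth making explicit in your write-up is that Stone--Weierstrass in the complex-scalar setting requires the algebra to be conjugation-closed, which holds here because polynomials in real variables with complex coefficients are closed under conjugation; with that remark added, your proof is complete.
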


\begin{proof}
Take any $f \in L^2(B(0,1),dw)$ and fix $\varepsilon>0$. There is $\delta >0$ such that
\begin{align*}
\|f-f\chi_{B(0,1-\delta)}\|_{L^2(dw)}<\varepsilon.
\end{align*}
Let $\Phi \in C^{\infty}_{c}(B(0,1))$ be a radial function such that $\int \Phi(\mathbf{x})\,dw(\mathbf{x})=1$. Let us denote $\Phi_t(\mathbf{x})=t^{-\mathbf{N}}\Phi(t^{-1}\mathbf{x})$. Since $\Phi_t$ is an approximate of the identity on $L^2(dw)$, there is $t>0$ such that
\begin{align*}
\|\Phi_t*(f\chi_{B(0,1-\delta)})-f\chi_{B(0,1-\delta)}\|_{L^2(dw)}<\varepsilon
\end{align*}
and $\supp \Phi_t*(f \chi_{B(0,1-\delta)}) \subseteq B(0,1-\delta/2)$. Since $f\chi_{B(0,1-\delta)} \in L^2(dw)$, the function $\Phi_t*(f\chi_{B(0,1-\delta)})$ is continuous. Moreover, there is $\eta>0$ such that $g_1(\mathbf{x})>\eta$ for  $\mathbf{x} \in B(0,1-\delta/2)$. This implies that $(\Phi_t*(f \chi_{B(0,1-\delta)}))g_1^{-1}$ is a continuous function on $B(0,1-\delta/2)$, which extends to a continuous $h$ function on $B(0,1)$ (by putting  $0$ on $B(0,1) \setminus B(0,1-\delta/2)$). Therefore, by the Stone--Weierstrass theorem,  there is a polynomial $p$ such that
\begin{equation}\label{eq:Phi_approx}
\|h-p\|_{L^{\infty}(B(0,1))}<\varepsilon.
\end{equation}
Finally, by~\eqref{eq:Phi_approx},
\begin{align*}
\|\Phi_t*(f\chi_{B(0,1-\delta)})-pg_1\|_{L^2(dw)}& = \|hg_1-pg_1\|_{L^2(dw)} \\
&\leq w(B(0,1))^{1\slash 2} \|hg_1-pg_1\|_{L^\infty(B(0,1))}\\
&\leq w(B(0,1))^{1\slash 2}\|g_1\|_{L^{\infty}(B(0,1))}\|h-p\|_{L^\infty(B(0,1))}\\
&\leq w(B(0,1))^{1\slash 2}\varepsilon.
\end{align*}
\end{proof}

\begin{proof}[Proof of Theorem \ref{teo:support}]
It suffices to consider $r=1$. Let $f \in L^2(B(0,1),dw)$. Fix $\varepsilon>0$. By Lemma~\ref{lem:approx} there is a polynomial $p$ such that
\begin{align*}
\|f-pg_1\|_{L^2(dw)}<\varepsilon.
\end{align*}
Since the Dunkl translation is bounded on $L^2(dw)$ and its norm is $1$, we have
\begin{equation}\label{eq:epsilon}
\|\tau_{\mathbf{x}}(f-(pg_1))\|_{L^2(dw)}=\|\tau_{\mathbf{x}}f-\tau_{\mathbf{x}}(pg_1)\|_{L^2(dw)}<\varepsilon.
\end{equation}
By Lemma~\ref{lem:representation} and the fact that the Dunkl translations commute with the Dunkl operators, the function $\tau_{\mathbf{x}}(pg_1)$ can be written in the form
\begin{align*}
\tau_{\mathbf{x}}(pg_{1})(-\mathbf{y})&=\tau_{\mathbf{x}}\left(\sum_{\ell=0}^{d}\sum_{\|\alpha\| \leq \ell }c_{\ell,\alpha}T^{\alpha}(g_{1+\ell})(-\mathbf{y})\right)=\sum_{\ell=0}^{d}\sum_{\|\alpha\| \leq \ell }c_{\ell,\alpha}T^{\alpha}\tau_{\mathbf{x}}(g_{1+\ell})(-\mathbf{y}),
\end{align*}
where $d$ is the degree of $p$. Since the functions $g_{1+\ell}$ are radial and supported by $B(0,1)$, by~\eqref{eq:support_radial} we have that $\supp \tau_{\mathbf x} g_{\ell+1}(-\, \cdot )  \subseteq \mathcal{O}(B(\mathbf{x},1))$. This implies that $\supp ( T ^{\alpha} \tau_{\mathbf x} g_{\ell+1})(-\, \cdot) \subseteq \mathcal{O}(B(\mathbf{x},1))$ and, finally,
\begin{equation}\label{eq:support}
\supp \tau_{\mathbf x}(pg_1)(-\, \cdot)  \subseteq \mathcal{O}(B(\mathbf{x},1)).
\end{equation}
Since $\varepsilon>0$ is taken arbitrarily,~\eqref{eq:epsilon} and~\eqref{eq:support} imply the claim.
\end{proof}

\section{Consequences of Theorem~\ref{teo:support}}

\begin{coro}
Suppose that $\mathbf{x},\mathbf{y} \in \mathbb{R}^N$ satisfy $\|\mathbf{x}\|+\|\mathbf{y}\| < 1$. If $f$ is a continuous compactly supported function such that $f(\mathbf{z})=0$ for all $\mathbf{z} \in B(0,1)$, then $\tau_{\mathbf{x}}f(\mathbf{y})=0$.
\end{coro}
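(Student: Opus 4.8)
The plan is to deduce the statement from Theorem~\ref{teo:support} by a duality argument that transfers the translation off $f$ onto a test function concentrated near $\mathbf y$. Fix $\mathbf x,\mathbf y$ with $\|\mathbf x\|+\|\mathbf y\|<1$ and choose $\varepsilon>0$ so small that $\|\mathbf x\|+\|\mathbf y\|+\varepsilon<1$. It suffices to show that the $L^2(dw)$-function $\tau_{\mathbf x}f$ vanishes almost everywhere on $B(\mathbf y,\varepsilon)$; the value at the single point $\mathbf y$ then follows from continuity. To this end I would test $\tau_{\mathbf x}f$ against an arbitrary $g\in C_c(\mathbb R^N)$ with $\supp g\subseteq B(\mathbf y,\varepsilon)$ and prove that the pairing $\int \tau_{\mathbf x}f(\mathbf z)\,g(\mathbf z)\,dw(\mathbf z)$ vanishes.

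First I would invoke the skew-symmetry of the Dunkl translation, which is stated for Schwartz functions and extends to $f,g\in L^2(dw)$ by density since $\tau_{\pm\mathbf x}$ are $L^2(dw)$-contractions, to write
\begin{equation*}
\int \tau_{\mathbf x}f(\mathbf z)\,g(\mathbf z)\,dw(\mathbf z)=\int f(\mathbf z)\,\tau_{-\mathbf x}g(\mathbf z)\,dw(\mathbf z).
\end{equation*}
The crucial step is to locate the support of $\tau_{-\mathbf x}g$. Since $B(\mathbf y,\varepsilon)\subseteq B(0,\|\mathbf y\|+\varepsilon)$, the function $g$ is supported in the \emph{centered} ball $B(0,\|\mathbf y\|+\varepsilon)$, so Theorem~\ref{teo:support}, applied with center $-\mathbf x$ and radius $r=\|\mathbf y\|+\varepsilon$, gives
\begin{equation*}
\supp\big(\mathbf w\mapsto\tau_{-\mathbf x}g(-\mathbf w)\big)\subseteq\mathcal O\big(B(-\mathbf x,\|\mathbf y\|+\varepsilon)\big).
\end{equation*}

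Now I would unwind the geometry. If $\tau_{-\mathbf x}g(\mathbf z)\neq0$, then $-\mathbf z\in\mathcal O(B(-\mathbf x,\|\mathbf y\|+\varepsilon))$, that is $\|\sigma(\mathbf x)-\mathbf z\|\leq\|\mathbf y\|+\varepsilon$ for some $\sigma\in G$ (using $\sigma(-\mathbf x)=-\sigma(\mathbf x)$); since every $\sigma\in G$ is orthogonal, the triangle inequality yields $\|\mathbf z\|\leq\|\mathbf x\|+\|\mathbf y\|+\varepsilon<1$. Hence $\tau_{-\mathbf x}g$ is supported in $B(0,1)$, where $f\equiv0$, so the integrand $f\,\tau_{-\mathbf x}g$ vanishes almost everywhere and the pairing is zero. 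As $g$ ranges over $C_c(B(\mathbf y,\varepsilon))$ this forces $\tau_{\mathbf x}f=0$ a.e.\ on $B(\mathbf y,\varepsilon)$, and continuity of $\tau_{\mathbf x}f$ at $\mathbf y$ gives $\tau_{\mathbf x}f(\mathbf y)=0$.

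The geometric core is an immediate reading of Theorem~\ref{teo:support} together with the orthogonality of the reflections, so I expect the main obstacle to be the two analytic points surrounding it: justifying the skew-symmetry transfer for merely continuous, compactly supported $f,g$ (handled by $L^2(dw)$-density of $\mathcal S(\mathbb R^N)$ and the $L^2$-contractivity of $\tau_{\pm\mathbf x}$), and passing from a.e.\ vanishing of $\tau_{\mathbf x}f$ to vanishing at the single point $\mathbf y$. The latter needs continuity of $\tau_{\mathbf x}f$; should this not be directly available for continuous compactly supported $f$, I would first regularize $f$ by a Dunkl convolution with a radial approximate identity supported in a tiny ball $B(0,t)$, which by the support theorem still vanishes on $B(0,1-t)$, apply the argument with $t<1-\|\mathbf x\|-\|\mathbf y\|$ to the now-smooth (hence pointwise-defined) translate, and control the limit.
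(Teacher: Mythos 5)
Your proof is correct and follows essentially the same route as the paper: skew-symmetry of the Dunkl translation transfers the pairing to $\int f\,\tau_{-\mathbf x}g\,dw$, Theorem~\ref{teo:support} applied to $g$ supported in a centered ball $B(0,\|\mathbf y\|+\varepsilon)$ locates $\supp\tau_{-\mathbf x}g$ inside $\mathcal O(B(\mathbf x,\|\mathbf y\|+\varepsilon))\subseteq B(0,1)$ via orthogonality of the reflections, and the pairing vanishes. Your extra care about extending skew-symmetry to $L^2(dw)$ by density and about passing from a.e.\ vanishing near $\mathbf y$ to the pointwise value (via regularization if needed) only makes explicit steps the paper's proof glosses over.
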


\begin{proof} The corollary follows from~\cite[Theorem 5.1]{AAS-Colloq}. We present here an alternative proof.

Take $\varepsilon>0$ such that $\|\mathbf{x}\|+\|\mathbf{y}\|+\varepsilon<1$.  Let $g\in L^2(dw)$, $\text{\rm supp}\, g\subseteq B(0, \| \mathbf y\|+\varepsilon)$. We have
\begin{equation}\label{eq:translation_duality}
\int \tau_{\mathbf{x}}f(\mathbf{z}) g(\mathbf{z})\,dw(\mathbf{z})=\int f(\mathbf{z})\tau_{-\mathbf{x}} g(\mathbf{z})\,dw(\mathbf{z}).
\end{equation}
 By Theorem~\ref{teo:support}, 
\begin{align*}
\supp \tau_{-\mathbf{x}} g \subseteq \mathcal{O}(B(\mathbf{x},\|\mathbf{y}\|+\varepsilon))\subseteq B(0,\|\mathbf{x}\|+\|\mathbf{y}\|+\varepsilon) \subseteq B(0,1).
\end{align*}
By our assumption $f(\mathbf{z})=0$ for all $\mathbf{z} \in B(0,1)\supseteq \supp \tau_{-\mathbf{x}}g$, so the second integral in~\eqref{eq:translation_duality} is zero. Thus $\tau_{\mathbf x}f \equiv 0$ on $B(0,\| y\|+\varepsilon)$. In particular, $\tau_{\mathbf{x}}f(\mathbf{y})=0$.
\end{proof}
\begin{lema}\label{lem:L2_better}
There is a constant $C>0$ such that for any $r>0$, $\mathbf{x} \in \mathbb{R}^N$, and any radial function  $\phi\in C_c (B(0,r))$ we have
\begin{align*}
\|\tau_{\mathbf{x}}\phi(-\, \cdot)\|_{L^2(dw)} \leq C \frac{r^{\mathbf N} \| \phi\|_{L^\infty}}{w(B(\mathbf{x},r))^{1/2}}.
\end{align*}
\end{lema}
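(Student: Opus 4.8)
The plan is to reduce everything to a pointwise (amplitude) bound on $\tau_{\mathbf x}\phi(-\cdot)$ and then cash it in against the support localization of Theorem~\ref{teo:support}. First I would record that, by $G$-invariance of $dw$, the orbit in Theorem~\ref{teo:support} satisfies $w(\mathcal O(B(\mathbf x,r)))\le\sum_{\sigma\in G}w(B(\sigma(\mathbf x),r))=|G|\,w(B(\mathbf x,r))$. Thus it suffices to prove the amplitude estimate
$$\|\tau_{\mathbf x}\phi(-\cdot)\|_{L^\infty}\le \frac{C\,r^{\mathbf N}\|\phi\|_{L^\infty}}{w(B(\mathbf x,r))},$$
since combining it with Theorem~\ref{teo:support} yields
$$\|\tau_{\mathbf x}\phi(-\cdot)\|_{L^2(dw)}^2\le\|\tau_{\mathbf x}\phi(-\cdot)\|_{L^\infty}^2\,w\big(\mathcal O(B(\mathbf x,r))\big)\le\frac{C^2|G|\,r^{2\mathbf N}\|\phi\|_{L^\infty}^2}{w(B(\mathbf x,r))},$$
which is the claim. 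I would emphasize here the \emph{main obstacle}: the naive Plancherel estimate $\|\tau_{\mathbf x}\phi(-\cdot)\|_{L^2(dw)}=\|E(i\cdot,\mathbf x)\mathcal F\phi\|_{L^2(dw)}\le\|\phi\|_{L^2(dw)}\lesssim r^{\mathbf N/2}\|\phi\|_{L^\infty}$ is sharp only when $w(B(\mathbf x,r))\sim r^{\mathbf N}$ and fails to produce the decisive gain $w(B(\mathbf x,r))^{-1/2}$ once $\mathbf x$ lies far from the origin; since $\phi$ is merely continuous, its Dunkl transform carries no frequency decay that is controllable by $\|\phi\|_{L^\infty}$ alone, so a frequency-splitting argument is unavailable. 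The gain must instead be extracted from the growth of $dw$ near $\mathbf x$.

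The amplitude estimate is where the real work lies, and my approach is to read it off R\"osler's radial formula~\eqref{eq:translation-radial} by testing the underlying measure against a Gaussian. Writing $\phi(\mathbf z)=\tilde\phi(\|\mathbf z\|)$ with $\tilde\phi$ supported in $[0,r]$, and using that $\mu_{\mathbf x}$ is a probability measure,~\eqref{eq:translation-radial} gives
$$|\tau_{\mathbf x}\phi(-\mathbf y)|\le\|\phi\|_{L^\infty}\,\mu_{\mathbf x}\big(\{\eta:A(\mathbf x,\mathbf y,\eta)\le r\}\big),$$
so it remains to show $\mu_{\mathbf x}(\{A\le r\})\le C\,r^{\mathbf N}w(B(\mathbf x,r))^{-1}$. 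The crucial point is that $\mu_{\mathbf x}$ need not be understood explicitly: applying~\eqref{eq:translation-radial} to the radial heat kernel $h_{r^2}$, whose profile is $s\mapsto c_k^{-1}(2r^2)^{-\mathbf N/2}e^{-s^2/(4r^2)}$, yields
$$\int e^{-A(\mathbf x,\mathbf y,\eta)^2/(4r^2)}\,d\mu_{\mathbf x}(\eta)=c_k\,(2r^2)^{\mathbf N/2}\,h_{r^2}(\mathbf x,\mathbf y).$$
On the set $\{A\le r\}$ the integrand is at least $e^{-1/4}$, whence $\mu_{\mathbf x}(\{A\le r\})\le e^{1/4}c_k(2r^2)^{\mathbf N/2}h_{r^2}(\mathbf x,\mathbf y)$. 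Finally the Gaussian bound~\eqref{eq:Gauss}, together with the trivial inequality $V(\mathbf x,\mathbf y,r)=\max(w(B(\mathbf x,r)),w(B(\mathbf y,r)))\ge w(B(\mathbf x,r))$, gives $h_{r^2}(\mathbf x,\mathbf y)\le C\,w(B(\mathbf x,r))^{-1}$ for every $\mathbf y$; since $(2r^2)^{\mathbf N/2}\sim r^{\mathbf N}$, this is precisely the required bound on $\mu_{\mathbf x}$, and hence the amplitude estimate.

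I expect the comparison with the heat kernel to be the heart of the matter: it converts the purely qualitative support statement into the quantitative amplitude decay $w(B(\mathbf x,r))^{-1}$ while retaining the dependence on $\phi$ solely through $\|\phi\|_{L^\infty}$. Two bookkeeping remarks would close the argument. First, the inequality $V(\mathbf x,\mathbf y,r)\ge w(B(\mathbf x,r))$ holds for all $\mathbf y$, so the pointwise estimate is global; outside $\mathcal O(B(\mathbf x,r))$ both sides vanish by Theorem~\ref{teo:support} anyway. Second, one could equally reduce to $r=1$ at the outset by means of the scaling property $\tau_{\mathbf x}(\phi_\lambda)=(\tau_{\lambda^{-1}\mathbf x}\phi)_\lambda$ and the homogeneity~\eqref{eq:homo} of $w$, and then run the above at unit scale; carrying the parameter $r$ through directly, as here, makes this reduction optional.
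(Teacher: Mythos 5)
Your proof is correct, and its skeleton coincides with the paper's: both arguments localize the support of $\tau_{\mathbf x}\phi(-\,\cdot)$ inside $\mathcal O(B(\mathbf x,r))$, whose measure is at most $|G|\,w(B(\mathbf x,r))$ by $G$-invariance of $dw$, and thereby reduce the $L^2$ bound to the amplitude estimate $\|\tau_{\mathbf x}\phi(-\,\cdot)\|_{L^\infty}\le C\,r^{\mathbf N}\|\phi\|_{L^\infty}\,w(B(\mathbf x,r))^{-1}$. The difference is what happens at that point: the paper simply cites \cite[Corollary 3.10]{conjugate} for the sup-norm bound, whereas you prove it from scratch, bounding $|\tau_{\mathbf x}\phi(-\mathbf y)|\le\|\phi\|_{L^\infty}\,\mu_{\mathbf x}\big(\{\eta: A(\mathbf x,\mathbf y,\eta)\le r\}\big)$ via R\"osler's formula \eqref{eq:translation-radial} and then estimating the $\mu_{\mathbf x}$-mass of $\{A\le r\}$ by testing $\mu_{\mathbf x}$ against the Gaussian profile of $h_{r^2}$, i.e.\ the identity $\int e^{-A^2/(4r^2)}\,d\mu_{\mathbf x}=c_k(2r^2)^{\mathbf N/2}h_{r^2}(\mathbf x,\mathbf y)$ combined with the Chebyshev-type observation that the integrand is at least $e^{-1/4}$ on $\{A\le r\}$, the upper Gaussian bound \eqref{eq:Gauss}, and $V(\mathbf x,\mathbf y,r)\ge w(B(\mathbf x,r))$. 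All steps check out, including the bookkeeping $(2r^2)^{\mathbf N/2}\sim r^{\mathbf N}$; and there is no circularity in your invoking Theorem~\ref{teo:support}, since Lemma~\ref{lem:L2_better} plays no role in its proof — though for radial $\phi$ the earlier fact \eqref{eq:support_radial} already suffices, and that is all the paper actually uses. As for what each route buys: the paper's citation keeps the lemma to three lines, while your heat-kernel comparison makes it self-contained within tools displayed in the paper (\eqref{eq:translation-radial} and \eqref{eq:Gauss}), in effect rederiving the imported estimate, and it makes transparent where the decisive gain $w(B(\mathbf x,r))^{-1}$ comes from — the growth of $w$ near $\mathbf x$ encoded in the on-diagonal heat bound — which your remark on the failure of the naive Plancherel estimate correctly identifies as the heart of the matter.
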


\begin{proof}
By~\eqref{eq:support_radial}  $\supp \tau_{\mathbf x}\phi(-\, \cdot) \subseteq \mathcal{O}(B(\mathbf{x},r))$, so
\begin{align*}
\|\tau_\mathbf{x}\phi\|_{L^2(dw)} \leq |G|^{1/2}w(B(\mathbf{x},r))^{1/2}\|\tau_\mathbf{x}\phi\|_{L^{\infty}}.
\end{align*}
Furthermore, by~\cite[Corollary 3.10]{conjugate}, there is a constant $C>0$ such that
\begin{align*}
 \|\tau_\mathbf{x}\phi\|_{L^{\infty}} \leq C\frac{r^{\mathbf N}\| \phi\|_{L^\infty} }{w(B(\mathbf{x},r))},
\end{align*}
so the lemma follows.
\end{proof}

\begin{propo}\label{propo:compact_supports}
There is a constant $C>0$ such that for any  $r_1,r_2>0$, any $f\in L^1(dw)$ such that $\supp f \subseteq B(0,r_2)$, any radial function $\phi\in C_c(B(0,r_1))$, and for all $\mathbf{y} \in \mathbb{R}^N$ we have
\begin{align*}
\|\tau_{\mathbf{y}}(f * \phi)\|_{L^1(dw)} \leq C (r_1(r_1+r_2))^{\frac{\mathbf{N}}{2}}
\| \phi\|_{L^{\infty}}\|f\|_{L^1(dw)}.
\end{align*}
\end{propo}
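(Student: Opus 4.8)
The plan is to reduce the $L^1(dw)$ bound to a product of a measure factor coming from the localization of the support and an $L^2(dw)$ estimate that decays in $\mathbf y$, and then to check that the two $\mathbf y$-dependent factors cancel via the growth estimate \eqref{eq:growth}. Throughout write $g=f*\phi$. Since $f\in L^1(dw)$ and $\phi\in L^2(dw)$, \eqref{eq:t2} gives $g\in L^2(dw)$, so $\tau_{\mathbf y}g$ is well defined in $L^2(dw)$ and, comparing Dunkl transforms ($\mathcal F(\tau_{\mathbf y}g)=E(i\cdot,\mathbf y)\mathcal Fg$ on one hand, and $\mathcal F(f*\tau_{\mathbf y}\phi)=c_k\mathcal Ff\,E(i\cdot,\mathbf y)\mathcal F\phi=E(i\cdot,\mathbf y)\mathcal Fg$ on the other), one obtains the identity
\begin{equation*}
\tau_{\mathbf y}(f*\phi)=f*(\tau_{\mathbf y}\phi).
\end{equation*}
This rewriting is the crucial step, since it is what will let Lemma~\ref{lem:L2_better} supply the required decay in $\mathbf y$.

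First I would pin down the support of $g$. Because $\phi$ is radial with $\supp\phi\subseteq B(0,r_1)$, \eqref{eq:support_radial} gives $\supp\tau_{\mathbf x}\phi(-\,\cdot)\subseteq\mathcal O(B(\mathbf x,r_1))$, so in $g(\mathbf x)=\int f(\mathbf z)\,\tau_{\mathbf x}\phi(-\mathbf z)\,dw(\mathbf z)$ the integrand vanishes unless there is $\mathbf z$ with $\|\mathbf z\|\leq r_2$ and $d(\mathbf x,\mathbf z)\leq r_1$, where $d$ is the orbit distance \eqref{eq:orbit_dist}. The triangle inequality for $d$ together with $d(\mathbf x,0)=\|\mathbf x\|$ then forces $\|\mathbf x\|\leq r_1+r_2$, i.e. $\supp g\subseteq B(0,r_1+r_2)$. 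Applying Theorem~\ref{teo:support} to $g$ shows that $\tau_{\mathbf y}g(-\,\cdot)$ is supported in $\mathcal O(B(\mathbf y,r_1+r_2))$, a set of measure at most $|G|\,w(B(\mathbf y,r_1+r_2))$ by $G$-invariance of $w$; and since $dw$ is invariant under $\mathbf x\mapsto-\mathbf x$, the $L^1(dw)$- and $L^2(dw)$-norms of $\tau_{\mathbf y}g$ agree with those of $\tau_{\mathbf y}g(-\,\cdot)$.

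Next, Cauchy--Schwarz applied on this support gives
\begin{equation*}
\|\tau_{\mathbf y}g\|_{L^1(dw)}=\|\tau_{\mathbf y}g(-\,\cdot)\|_{L^1(dw)}\leq \bigl(|G|\,w(B(\mathbf y,r_1+r_2))\bigr)^{1/2}\,\|\tau_{\mathbf y}g\|_{L^2(dw)}.
\end{equation*}
For the $L^2(dw)$ factor I use the identity above together with \eqref{eq:t2} and Lemma~\ref{lem:L2_better} (with $r=r_1$, $\mathbf x=\mathbf y$), recalling $\|\tau_{\mathbf y}\phi\|_{L^2(dw)}=\|\tau_{\mathbf y}\phi(-\,\cdot)\|_{L^2(dw)}$:
\begin{equation*}
\|\tau_{\mathbf y}g\|_{L^2(dw)}=\|f*(\tau_{\mathbf y}\phi)\|_{L^2(dw)}\leq\|f\|_{L^1(dw)}\,\|\tau_{\mathbf y}\phi\|_{L^2(dw)}\leq C\,\|f\|_{L^1(dw)}\,\frac{r_1^{\mathbf N}\|\phi\|_{L^\infty}}{w(B(\mathbf y,r_1))^{1/2}}.
\end{equation*}
Multiplying the two displays, the $\mathbf y$-dependence collapses: by \eqref{eq:growth} applied with $0<r_1<r_1+r_2$,
\begin{equation*}
\left(\frac{w(B(\mathbf y,r_1+r_2))}{w(B(\mathbf y,r_1))}\right)^{1/2}\leq C\left(\frac{r_1+r_2}{r_1}\right)^{\mathbf N/2},
\end{equation*}
and therefore $\|\tau_{\mathbf y}g\|_{L^1(dw)}\leq C\,\|f\|_{L^1(dw)}\|\phi\|_{L^\infty}\,r_1^{\mathbf N}\bigl((r_1+r_2)/r_1\bigr)^{\mathbf N/2}=C\,(r_1(r_1+r_2))^{\mathbf N/2}\|\phi\|_{L^\infty}\|f\|_{L^1(dw)}$, as claimed.

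The main obstacle is precisely this $\mathbf y$-uniformity. Estimating $\|\tau_{\mathbf y}g\|_{L^2(dw)}$ by the bare contraction bound $\|g\|_{L^2(dw)}\leq\|f\|_{L^1(dw)}\|\phi\|_{L^2(dw)}\sim\|f\|_{L^1(dw)}\|\phi\|_{L^\infty}r_1^{\mathbf N/2}$ carries no decay in $\mathbf y$ and cannot absorb the growing measure factor $w(B(\mathbf y,r_1+r_2))^{1/2}$ produced by the support localization. It is exactly the passage to $f*(\tau_{\mathbf y}\phi)$ and the sharp $w(B(\mathbf y,r_1))^{-1/2}$ decay furnished by Lemma~\ref{lem:L2_better} that compensates the support measure, leaving only the harmless ratio controlled by \eqref{eq:growth}.
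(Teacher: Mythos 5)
Your proof is correct and follows essentially the same route as the paper's: localize the support of $\tau_{\mathbf y}(f*\phi)(-\,\cdot)$ via Theorem~\ref{teo:support}, apply Cauchy--Schwarz, bound the $L^2(dw)$-norm through the identity $\tau_{\mathbf y}(f*\phi)=f*(\tau_{\mathbf y}\phi)$ and Lemma~\ref{lem:L2_better}, and cancel the $\mathbf y$-dependence with \eqref{eq:growth}. The only (harmless) deviations are cosmetic: you invoke \eqref{eq:t2} directly where the paper spells out the same estimate via Minkowski's integral inequality and the $L^2(dw)$-contractivity of $\tau_{-\mathbf z}$, and you make explicit two points the paper leaves implicit, namely $\supp(f*\phi)\subseteq B(0,r_1+r_2)$ and the invariance of $dw$ under $\mathbf x\mapsto-\mathbf x$.
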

\begin{proof}
By Theorem~\ref{teo:support}, $\supp \tau_{\mathbf{y}}(f * \phi)(-\, \cdot) \subseteq \mathcal{O}(B(\mathbf{y},r_1+r_2))$. Therefore
\begin{equation}\label{eq:L1_by_L2}
\|\tau_{\mathbf{y}}(f * \phi)\|_{L^1(dw)} \leq |G|^{1/2}w(B(\mathbf{y},r_1+r_2))^{1/2} \|\tau_{\mathbf{y}}(f * \phi)\|_{L^2(dw)}.
\end{equation}
Since $\tau_{\mathbf{y}}(f * \phi)=f * (\tau_{\mathbf{y}}\phi)$, we have
\begin{align*}
\|\tau_{\mathbf{y}}(f * \phi)\|_{L^2(dw)}&=\left(\int|(f*\tau_{\mathbf{y}}\phi)(\mathbf{x})|^2\,dw(\mathbf{x})\right)^{1/2}\\
&=\left(\int \left|\int \tau_{\mathbf{x}}(\tau_{\mathbf{y}}\phi)
(-\mathbf{z})f(\mathbf{z})\,dw(\mathbf{z})\right|^2\,dw(\mathbf{x})\right)^{1/2}\\
&=\left(\int \left|\int \tau_{-\mathbf{z}}(\tau_{\mathbf{y}}\phi)(\mathbf{x})f(\mathbf{z})\,dw(\mathbf{z})\right|^2\,dw(\mathbf{x})\right)^{1/2}.
\end{align*}
By Minkowski integral inequality
\begin{equation}\label{eq:Minkowski}
\left(\int \left|\int \tau_{-\mathbf{z}}(\tau_{\mathbf{y}}\phi)(\mathbf{x})f(\mathbf{z})\,dw(\mathbf{z})\right|^2\,dw(\mathbf{x})\right)^{1/2}
\leq \int |f(\mathbf{z})| \|\tau_{-\mathbf{z}}\tau_{\mathbf{y}}\phi\|_{L^2(dw)} \,dw(\mathbf{z}).
\end{equation}
Since $g \mapsto \tau_{-\mathbf{z}}g$ is a contraction on $L^2(dw)$ for all $\mathbf{z} \in \mathbb{R}^N$, by Lemma~\ref{lem:L2_better} we have
\begin{equation}\label{eq:double_translate}
\|\tau_{-\mathbf{z}}\tau_{\mathbf{y}}\phi\|_{L^2(dw)} \leq \|\tau_{\mathbf{y}}\phi\|_{L^2(dw)} \leq C\frac{r_1^{\mathbf N} \| \phi\|_{L^\infty}}{w(B(\mathbf{y},r_1))^{1/2}}.
\end{equation}
Therefore, by~\eqref{eq:Minkowski} and~\eqref{eq:double_translate},
\begin{align*}
\|\tau_{\mathbf{y}}(f * \phi)\|_{L^2(dw)}
\leq C\frac{r_1^{\mathbf N} \| \phi\|_{L^\infty} }{w(B(\mathbf{y},r_1))^{1/2}}
\|f\|_{L^1(dw)}.
\end{align*}
Finally, by~\eqref{eq:L1_by_L2},
\begin{align*}
\|\tau_{\mathbf{y}}(f * \phi)\|_{L^1(dw)} & \leq C |G|^{1/2} w(B(\mathbf{y},r_1+r_2))^{1/2}
\frac{r_1^{\mathbf N} \|\phi\|_{L^\infty}}{w(B(\mathbf{y},r_1))^{1\slash 2}} \|f\|_{L^1(dw)} \\
&\leq C' \left(r_1(r_1+r_2)\right)^{\mathbf{N}/2}\| \phi\|_{L^\infty} \|f\|_{L^1(dw)}.
\end{align*}
\end{proof}

Let $\Psi_0 \in C^{\infty}((-1,1))$ and $\Psi \in C^{\infty}(\frac{1}{4},4)$ be such that 
\begin{align}\label{eq:reproduce}
1=\Psi_{0}(\|\mathbf{x}\|)+\sum_{n=1}^{\infty}\Psi(2^{-n}\|\mathbf{x}\|)=\sum_{n=0}^{\infty}\Psi_{n}(\|\mathbf{x}\|) \text{ for all }\mathbf{x}\in \mathbb{R}^N.
\end{align}

\begin{propo}\label{propo:weights} Fix  $\delta\geq 0$.  Assume that $\phi$ is a continuous radial function such that
\begin{equation}\label{eq:A1}
\sum_{n=0}^\infty 2^{n(\mathbf N+\delta)} \| \phi(\cdot)\Psi_n(\|\cdot\|)\|_{L^\infty}=A<\infty
\end{equation}
and $f$ is a measurable function on $\mathbb R^N$ such that
\begin{equation}\label{eq:B1}
\sum_{j=0}^\infty 2^{j(\mathbf N\slash 2+\delta)}\| f(\cdot)\Psi_n(\|\cdot\|)\|_{L^1(dw)} =B<\infty.
\end{equation}
Then $f*\phi\in L^2(dw)\cap L^1(dw)$ and there is a constant $C>0$ such that for every $\mathbf y\in\mathbb R^N$ we have
\begin{equation}
\int |\tau_{\mathbf y}(f*\phi)(-\mathbf x)|(1+d(\mathbf x,\mathbf y))^\delta \, dw(\mathbf x)
\leq C A B.
\end{equation}
\end{propo}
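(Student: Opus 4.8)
The plan is to decompose both $\phi$ and $f$ according to the dyadic partition of unity \eqref{eq:reproduce} and then control each resulting piece by Proposition~\ref{propo:compact_supports}, using Theorem~\ref{teo:support} to turn the weight $(1+d(\mathbf x,\mathbf y))^\delta$ into a harmless power of $2$.

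First I would set $\phi_n(\mathbf x)=\phi(\mathbf x)\Psi_n(\|\mathbf x\|)$ and $f_j(\mathbf x)=f(\mathbf x)\Psi_j(\|\mathbf x\|)$, so that $\phi=\sum_{n\geq 0}\phi_n$ and $f=\sum_{j\geq 0}f_j$. Each $\phi_n$ is a continuous radial function and each $f_j\in L^1(dw)$, and by the support properties of the $\Psi_n$ we have $\supp\phi_n,\,\supp f_n\subseteq B(0,c2^{n})$ for a universal constant $c$. Since $w(B(0,c2^{n}))\sim 2^{n\mathbf N}$ by \eqref{eq:behavior}, assumption \eqref{eq:A1} gives $\|\phi\|_{L^1(dw)}\leq\sum_n\|\phi_n\|_{L^\infty}w(B(0,c2^{n}))\lesssim\sum_n 2^{n\mathbf N}\|\phi_n\|_{L^\infty}\leq A$, while \eqref{eq:B1} gives $\|f\|_{L^1(dw)}\leq\sum_j\|f_j\|_{L^1(dw)}\leq B$. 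Thus $f\in L^1(dw)$ and $\phi\in L^1(dw)$ is radial, so $f*\phi$ is well defined and $f*\phi=\sum_{j,n}f_j*\phi_n$.

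Next, to each summand I would apply Proposition~\ref{propo:compact_supports} with $r_1=c2^{n}$ (the support radius of $\phi_n$) and $r_2=c2^{j}$ (that of $f_j$), obtaining
\begin{equation*}
\|\tau_{\mathbf y}(f_j*\phi_n)\|_{L^1(dw)}\leq C\bigl(2^{n}(2^{n}+2^{j})\bigr)^{\mathbf N/2}\|\phi_n\|_{L^\infty}\|f_j\|_{L^1(dw)}.
\end{equation*}
By Theorem~\ref{teo:support} we also have $\supp\tau_{\mathbf y}(f_j*\phi_n)(-\,\cdot)\subseteq\mathcal O(B(\mathbf y,c2^{n}+c2^{j}))$, so on this set $d(\mathbf x,\mathbf y)\leq c(2^{n}+2^{j})\lesssim 2^{\max(n,j)}$ and hence $(1+d(\mathbf x,\mathbf y))^\delta\leq C\,2^{\delta\max(n,j)}$. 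Combining the two facts yields the per-piece estimate
\begin{equation*}
\int|\tau_{\mathbf y}(f_j*\phi_n)(-\mathbf x)|(1+d(\mathbf x,\mathbf y))^\delta\,dw(\mathbf x)\leq C\,2^{\delta\max(n,j)}\bigl(2^{n}(2^{n}+2^{j})\bigr)^{\mathbf N/2}\|\phi_n\|_{L^\infty}\|f_j\|_{L^1(dw)}.
\end{equation*}

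Finally I would sum over $n,j\geq 0$, splitting into the ranges $n\geq j$ and $j>n$ and using $(2^{n}+2^{j})^{\mathbf N/2}\sim 2^{(\mathbf N/2)\max(n,j)}$. In the range $n\geq j$ the right-hand side collapses to $C\,2^{n(\mathbf N+\delta)}\|\phi_n\|_{L^\infty}\cdot\|f_j\|_{L^1(dw)}$, whose sum is at most $C\,A\sum_j\|f_j\|_{L^1(dw)}\leq C\,AB$; in the range $j>n$ it collapses to $C\,2^{n\mathbf N/2}\|\phi_n\|_{L^\infty}\cdot 2^{j(\mathbf N/2+\delta)}\|f_j\|_{L^1(dw)}$, whose sum is $\leq C\,A\,B$ because $2^{n\mathbf N/2}\|\phi_n\|_{L^\infty}\leq 2^{n(\mathbf N+\delta)}\|\phi_n\|_{L^\infty}$ sums to at most $A$. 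This double summation is the main computational point, and it is exactly here that the asymmetric exponents $\mathbf N+\delta$ in \eqref{eq:A1} and $\mathbf N/2+\delta$ in \eqref{eq:B1} are needed: the factor $(r_1(r_1+r_2))^{\mathbf N/2}$ of Proposition~\ref{propo:compact_supports} charges the full power $\mathbf N$ to $\phi$ when $r_1\gtrsim r_2$ but only $\mathbf N/2$ to $f$, while the sharp localization in Theorem~\ref{teo:support} is what supplies the $2^{\delta\max(n,j)}$ that the $\delta$-weights on both sides absorb. The membership $f*\phi\in L^1(dw)$ follows from the case $\delta=0$, $\mathbf y=0$, and $f*\phi\in L^2(dw)$ follows from the same decomposition together with the bound $\|f_j*\phi_n\|_{L^2(dw)}\lesssim 2^{n\mathbf N/2}\|\phi_n\|_{L^\infty}\|f_j\|_{L^1(dw)}$, obtained by setting $\mathbf y=0$ in the intermediate $L^2$ estimate from the proof of Proposition~\ref{propo:compact_supports} and using $w(B(0,c2^{n}))\sim 2^{n\mathbf N}$; this is summable against $A$ and $B$.
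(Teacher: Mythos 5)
Your proof is correct and follows essentially the same route as the paper: the same dyadic decomposition $f=\sum_j f_j$, $\phi=\sum_n\phi_n$ via \eqref{eq:reproduce}, the same application of Proposition~\ref{propo:compact_supports} with $r_1\sim 2^n$, $r_2\sim 2^j$, the same use of Theorem~\ref{teo:support} to convert the weight into $(2^j+2^n)^\delta$, and the same split of the double sum at $n\geq j$ versus $j>n$. The only point the paper makes more explicit is that the interchange $\tau_{-\mathbf y}(f*\phi)=\sum_{j,n}\tau_{-\mathbf y}(f_j*\phi_n)$ is licensed by the absolute convergence of the double series in $L^2(dw)$ (via \eqref{eq:t1} and \eqref{eq:t2}) together with the $L^2$-contractivity of the translation, and your final $L^2$ bound $\|f_j*\phi_n\|_{L^2(dw)}\lesssim 2^{n\mathbf N/2}\|\phi_n\|_{L^\infty}\|f_j\|_{L^1(dw)}$ supplies exactly this justification.
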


\begin{proof} In the proof we will use the formula
$$\tau_{\mathbf x}(f*\phi)(-\mathbf y)=\tau_{-\mathbf y}(f*\phi) (\mathbf x)=(f*\phi)(\mathbf x,\mathbf y).$$
Let $f_j(\mathbf{x})=f(\mathbf{x})\Psi_j(\|\mathbf{x}\|) $, $\phi_n(\mathbf{x})=\phi(\mathbf{x})\Psi_n(\|\mathbf{x}\|)$. Observe that $f=\sum_{j=0}^\infty f_j$ and the series converges in $L^1(dw)$. Moreover, $\| \phi_n \|_{L^2(dw)} \leq C2^{n\mathbf N/2}\| \phi_n\|_{L^\infty}$, hence $\phi=\sum_{n=0}^\infty \phi_n$  and the  convergence is in $L^2(dw)$. So, by \eqref{eq:t2},  $f*\phi\in L^2(dw)$. Further, the double series
   $$ f*\phi =\sum_{j,n \in \mathbb{N}_0} f_j*\phi_n$$
  is absolutely convergent in $L^2(dw)$, because
   $$ \sum_{j,n \in \mathbb{N}_0} \| f_j*\phi_n\|_{L^2(dw)} \leq \sum_{j,n \in \mathbb{N}_0} \| f_j\|_{L^1(dw)}\| \phi_n\|_{L^2(dw)}\leq C\sum_{j,n \in \mathbb{N}_0} \| f_j\|_{L^1(dw)} 2^{n\mathbf N/2}\| \phi_n\|_{L^\infty}\leq CAB.$$
 Using \eqref{eq:t1} and \eqref{eq:t2} we have
\begin{equation}\label{eq:conv1}
\tau_{-\mathbf y}(f*\phi)=\sum_{j,n \in \mathbb{N}_0} \tau_{-\mathbf y} (f_j*\phi_n)=\sum_{j,n \in \mathbb{N}_0} f_j*(\tau_{-\mathbf y}\phi_n)
\end{equation}and the convergence (absolute) is in $L^2(dw)$. Clearly, $\supp\, f_j*\phi_n \subseteq B(0,2^{j}+2^{n}) $. Theorem~\ref{teo:support} implies $\tau_{-\mathbf y} (f_j*\phi_n)(\mathbf x)=0 $ for $d(\mathbf x,\mathbf y)>2^{j}+2^{n}$. Using  Proposition~\ref{propo:compact_supports}  we obtain
\begin{equation*}\begin{split}
\sum_{j,n \in \mathbb{N}_0} \int |(f_j * \phi_n)(\mathbf x,\mathbf y) & |(1+d(\mathbf x,\mathbf y))^\delta\, dw(\mathbf x) \\
&\leq C \sum_{j,n \in \mathbb{N}_0} 2^{\mathbf{N}n/2}(2^j+2^n)^{\delta+\mathbf N\slash 2} \|f_j\|_{L^1(dw)}\|\phi_n\|_{L^\infty} \leq CAB.
\end{split}\end{equation*}
Thus, the double series~\eqref{eq:conv1} converges in the $L^1((1+d(\mathbf x,\mathbf y))^{\delta}\, dw(\mathbf x))$-norm as well. The proof of the proposition is complete.
\end{proof} 

\begin{coro}\label{coro:weight}
Assume that there is $\delta>0$ such that  $f(\mathbf x)(1+\| \mathbf x\|)^{\mathbf N\slash 2+\delta} \in L^1(dw)$ and $ \phi(\mathbf x)(1+\| \mathbf x\|)^{\mathbf N+\delta}\in L^\infty(dw)$. Then for every $0<\delta'<\delta$ there is a constant $C=C_{\delta,\delta'}$ such that
\begin{equation}\begin{split} \| \tau_{-\mathbf  y}(f*\phi)(\mathbf x)
&(1+ d(\mathbf x,\mathbf y))^{\delta'}\|_{L^1 dw(\mathbf x)}\\
&\leq C \|f(\cdot)(1+\|\cdot\|)^{\mathbf{N}/2+\delta}\|_{L^1(dw)}\|\phi(\cdot)(1+\|\cdot\|)^{\mathbf{N}+\delta}\|_{L^{\infty}}.
\end{split}\end{equation}
In particular,
\begin{equation}\label{eq:weights_app}
\begin{split}
\int_{\mathcal{O}(B(\mathbf{y},r))^{c}} & |\tau_{-\mathbf y}(f*\phi)(\mathbf x)| \, dw(\mathbf x)
 \\&\leq Cr^{-\delta'}\|f(\cdot)(1+\|\cdot\|)^{\mathbf{N}/2+\delta}\|_{L^1(dw)}\|\phi(\cdot)(1+\|\cdot\|)^{\mathbf{N}+\delta}\|_{L^{\infty}}.
\end{split}
\end{equation}
\end{coro}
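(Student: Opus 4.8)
The plan is to obtain both assertions from Proposition~\ref{propo:weights} applied with the parameter $\delta'$ playing the role of the smoothness exponent $\delta$ there; the whole point is to check that the pointwise weighted hypotheses of the corollary force the dyadic summability conditions \eqref{eq:A1} and \eqref{eq:B1} (written with $\delta'$), and to control the resulting constants $A$ and $B$ by $\|\phi(\cdot)(1+\|\cdot\|)^{\mathbf N+\delta}\|_{L^\infty}$ and $\|f(\cdot)(1+\|\cdot\|)^{\mathbf N/2+\delta}\|_{L^1(dw)}$ respectively. The extra power $\delta-\delta'>0$ kept in each weight is exactly what makes the relevant series converge, and is the reason for the restriction $\delta'<\delta$.

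First I would note that, by the construction \eqref{eq:reproduce} of the partition of unity, on the support of $\Psi_n$ one has $1+\|\mathbf x\|\sim 2^{n}$ (with both sides comparable to $1$ when $n=0$). Hence
\[
\|\phi(\cdot)\Psi_n(\|\cdot\|)\|_{L^\infty}\le C\,2^{-n(\mathbf N+\delta)}\,\|\phi(\cdot)(1+\|\cdot\|)^{\mathbf N+\delta}\|_{L^\infty},
\]
so the $n$-th term of $A$, taken with the weight $2^{n(\mathbf N+\delta')}$, is at most $C\,2^{-n(\delta-\delta')}\|\phi(\cdot)(1+\|\cdot\|)^{\mathbf N+\delta}\|_{L^\infty}$; summing the geometric series (convergent precisely because $\delta'<\delta$) bounds $A$ by a constant multiple of $\|\phi(\cdot)(1+\|\cdot\|)^{\mathbf N+\delta}\|_{L^\infty}$. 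For $B$ the same comparison gives
\[
2^{n(\mathbf N/2+\delta')}\|f(\cdot)\Psi_n(\|\cdot\|)\|_{L^1(dw)}\le C\,2^{-n(\delta-\delta')}\int |f(\mathbf x)|\,\Psi_n(\|\mathbf x\|)\,(1+\|\mathbf x\|)^{\mathbf N/2+\delta}\,dw(\mathbf x),
\]
and since $2^{-n(\delta-\delta')}\le1$ while the $\Psi_n$ form a partition of unity with uniformly bounded overlap, summing in $n$ bounds $B$ by a constant multiple of $\|f(\cdot)(1+\|\cdot\|)^{\mathbf N/2+\delta}\|_{L^1(dw)}$.

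With $A$ and $B$ so estimated, Proposition~\ref{propo:weights} (whose proof in fact bounds exactly the integral of $\tau_{-\mathbf y}(f*\phi)(\mathbf x)=(f*\phi)(\mathbf x,\mathbf y)$ against $(1+d(\mathbf x,\mathbf y))^{\delta'}$) immediately yields the first displayed inequality, with a constant $C=C_{\delta,\delta'}$ absorbing the two geometric factors. For the ``in particular'' part I would use that, by the definition \eqref{eq:orbit_dist} of $d$ and of the orbit ball, the condition $\mathbf x\notin\mathcal O(B(\mathbf y,r))$ forces $d(\mathbf x,\mathbf y)>r$, so that $1\le r^{-\delta'}(1+d(\mathbf x,\mathbf y))^{\delta'}$ on that set; inserting this factor into $\int_{\mathcal O(B(\mathbf y,r))^c}|\tau_{-\mathbf y}(f*\phi)(\mathbf x)|\,dw(\mathbf x)$ and invoking the weighted bound just proved gives \eqref{eq:weights_app}.

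The only genuine point in the argument is the passage from the pointwise weights $(1+\|\cdot\|)^{\mathbf N+\delta}$ and $(1+\|\cdot\|)^{\mathbf N/2+\delta}$ to the dyadically summed conditions \eqref{eq:A1} and \eqref{eq:B1}; everything else is bookkeeping. I expect no real obstacle beyond tracking the exponents carefully, and the strict inequality $\delta'<\delta$ enters exactly because one needs a genuinely summable geometric series in the estimate for $A$.
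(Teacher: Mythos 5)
Your proposal is correct and follows exactly the route the paper intends: the corollary is stated there as an immediate consequence of Proposition~\ref{propo:weights}, and your verification that the pointwise weights $(1+\|\cdot\|)^{\mathbf N+\delta}$ and $(1+\|\cdot\|)^{\mathbf N/2+\delta}$ imply \eqref{eq:A1} and \eqref{eq:B1} with exponent $\delta'$ (via $1+\|\mathbf x\|\sim 2^n$ on $\supp\Psi_n$ and the geometric factor $2^{-n(\delta-\delta')}$), together with the observation that $\mathbf x\notin\mathcal O(B(\mathbf y,r))$ forces $d(\mathbf x,\mathbf y)>r$ for \eqref{eq:weights_app}, is precisely the intended argument. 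Your parenthetical remark that the proposition's proof actually controls $\int|(f*\phi)(\mathbf x,\mathbf y)|(1+d(\mathbf x,\mathbf y))^{\delta'}\,dw(\mathbf x)$, i.e.\ the quantity appearing in the corollary, correctly resolves the only notational wrinkle.
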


 \begin{coro}\label{coro:weight2}
Let  $s>\mathbf N$ be a positive integer and  $\varepsilon >0$.  Assume that a  function $F\in C^{2s}(\mathbb R^N)\cap L^1(dw)$ satisfies: 
$$ B_1=\| (I-\Delta)^s F(\mathbf x)(1+\| x\|)^{\mathbf N\slash 2+\varepsilon}\|_{L^1(dw(\mathbf x))}<\infty.$$
Then there is a constant $C_{s,\varepsilon}>0$ such that
$$ \sup_{\mathbf y \in \mathbb R^N}\| \tau_{\mathbf y}F\|_{L^1(dw)}\leq C_{s,\varepsilon} B_1. $$
In particular, for every $1\leq p< \infty$ we have  $\| g*F\|_{L^p(dw)}\leq C_{s,\varepsilon} B_1\| g\|_{L^p(dw)}$.
\end{coro}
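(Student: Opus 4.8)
The plan is to invert the operator $(I-\Delta)^s$ by a Dunkl convolution with a Bessel-type potential and then to reduce everything to Corollary~\ref{coro:weight}. Write $G=(I-\Delta)^sF$, so that the hypothesis reads $\|G(\cdot)(1+\|\cdot\|)^{\mathbf N/2+\varepsilon}\|_{L^1(dw)}=B_1<\infty$; in particular $G\in L^1(dw)$. Let $\phi$ be the function determined by $\mathcal F\phi(\xi)=c_k^{-1}(1+\|\xi\|^2)^{-s}$. Using the subordination identity
$$(1+\|\xi\|^2)^{-s}=\frac{1}{\Gamma(s)}\int_0^\infty t^{s-1}e^{-t}e^{-t\|\xi\|^2}\,dt$$
together with $\mathcal F^{-1}(e^{-t\|\xi\|^2})(\mathbf x)=h_t(\mathbf x)=c_k^{-1}(2t)^{-\mathbf N/2}e^{-\|\mathbf x\|^2/(4t)}$, one obtains the radial representation
$$\phi(\mathbf x)=\frac{c_k^{-1}}{\Gamma(s)}\int_0^\infty t^{s-1}e^{-t}h_t(\mathbf x)\,dt.$$
I would first record that $F=G*\phi$: since $F\in C^{2s}(\mathbb R^N)\cap L^1(dw)$, the intertwining relation $\mathcal F((I-\Delta)^sF)(\xi)=(1+\|\xi\|^2)^s\mathcal FF(\xi)$ gives $\mathcal FF=(1+\|\xi\|^2)^{-s}\mathcal FG=c_k\,\mathcal F\phi\,\mathcal FG$, and the Dunkl convolution theorem then yields $F=G*\phi$.

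The next step is to verify that $\phi$ satisfies the hypothesis of Corollary~\ref{coro:weight}, namely $\phi(\cdot)(1+\|\cdot\|)^{\mathbf N+\varepsilon}\in L^\infty$. In the integral defining $\phi$ the integrand is a constant times $t^{s-1-\mathbf N/2}e^{-t-\|\mathbf x\|^2/(4t)}$. For $\mathbf x=\mathbf 0$ the $t$-integral converges at $t=0$ precisely because $s>\mathbf N/2$, so $\phi$ is bounded; for large $\|\mathbf x\|$ the elementary bound $t+\|\mathbf x\|^2/(4t)\geq\tfrac12\bigl(t+\|\mathbf x\|^2/(4t)\bigr)+\tfrac12\|\mathbf x\|$ produces decay of $\phi$ faster than any power of $\|\mathbf x\|$. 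Hence $\|\phi(\cdot)(1+\|\cdot\|)^{\mathbf N+\varepsilon}\|_{L^\infty}\leq C_{s,\varepsilon}<\infty$ (the hypothesis $s>\mathbf N$ is more than enough here).

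With these two facts I would apply Corollary~\ref{coro:weight} with $f=G$, $\delta=\varepsilon$ and any fixed $0<\delta'<\varepsilon$. Since $F=G*\phi$ and $(1+d(\mathbf x,\mathbf y))^{\delta'}\geq 1$, this gives
$$\|\tau_{-\mathbf y}F\|_{L^1(dw)}\leq\int|\tau_{-\mathbf y}F(\mathbf x)|(1+d(\mathbf x,\mathbf y))^{\delta'}\,dw(\mathbf x)\leq CB_1\,\|\phi(\cdot)(1+\|\cdot\|)^{\mathbf N+\varepsilon}\|_{L^\infty}\leq C_{s,\varepsilon}B_1.$$
Taking the supremum over $\mathbf y\in\mathbb R^N$ and relabelling $\mathbf y\mapsto-\mathbf y$ yields $\sup_{\mathbf y}\|\tau_{\mathbf y}F\|_{L^1(dw)}\leq C_{s,\varepsilon}B_1$.

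Finally, the $L^p$ bound is a Young-type inequality based on this uniform estimate. Writing $(g*F)(\mathbf x)=\int g(\mathbf z)\,\tau_{-\mathbf z}F(\mathbf x)\,dw(\mathbf z)$, Minkowski's integral inequality gives $\|g*F\|_{L^1}\leq\sup_{\mathbf z}\|\tau_{-\mathbf z}F\|_{L^1}\,\|g\|_{L^1}$, while $|(g*F)(\mathbf x)|\leq\|g\|_{L^\infty}\|\tau_{\mathbf x}F\|_{L^1}$ gives $\|g*F\|_{L^\infty}\leq\sup_{\mathbf x}\|\tau_{\mathbf x}F\|_{L^1}\,\|g\|_{L^\infty}$; interpolation between these endpoints then produces the bound on $L^p(dw)$ for every $1\leq p<\infty$ with constant $C_{s,\varepsilon}B_1$. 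The main obstacle I anticipate is the rigorous justification of the identity $F=G*\phi$: one must establish the transform identity $\mathcal F((I-\Delta)^sF)=(1+\|\xi\|^2)^s\mathcal FF$ and the applicability of the Dunkl convolution theorem under the sole assumptions $F\in C^{2s}(\mathbb R^N)\cap L^1(dw)$ and $G\cdot(1+\|\cdot\|)^{\mathbf N/2+\varepsilon}\in L^1(dw)$. A convenient way around this is to note that $\mathcal FF=(1+\|\xi\|^2)^{-s}\mathcal FG$ with $\mathcal FG\in L^\infty$ and $(1+\|\xi\|^2)^{-s}\in L^2(dw)$ (since $s>\mathbf N/4$), whence $F\in L^2(dw)$ and the convolution theorem is available; care is still needed to see that the intermediate Dunkl derivatives are integrable enough for the intertwining to hold rather than merely formally.
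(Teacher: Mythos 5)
Your proposal is correct and follows essentially the same route as the paper: the paper's proof likewise sets $f=(I-\Delta)^sF$ and $g=c_k^{-1}\mathcal F^{-1}\{(1+\|\xi\|^2)^{-s}\}$, observes that $g$ is radial with faster-than-polynomial decay, writes $F=f*g$, and invokes Proposition~\ref{propo:weights}, of which the Corollary~\ref{coro:weight} you apply is the packaged consequence. The extra details you supply --- the subordination formula establishing the decay of the Bessel-type kernel, the justification of the identity $F=G*\phi$, and the endpoint estimates with interpolation for the $L^p$ bound --- are correct elaborations of steps the paper compresses into ``Clearly, $F=f*g$'' and ``In particular''.
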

\begin{proof}
Set $f=(I-\Delta)^{s}F$, $g(x)=c_k^{-1}\mathcal F^{-1}\{ (1+\| \xi\|^2)^{-s}\}(x)$. Then $g$ is a radial function satisfying $|g(\mathbf x)|\leq C_L (1+\| \mathbf x\|)^{-L}$  for every $L>0$. Clearly, $F=f*g$. Thus the corollary is a direct consequence of Proposition \ref{propo:weights}.
\end{proof}
\begin{coro}\label{coro:Schart_transl}
For every $F \in \mathcal S(\mathbb R^N)$ there is a constant $C>0$ such that
$$\sup_{\mathbf y\in \mathbb R^N} \| \tau_{\mathbf y}F\|_{L^1(dw)}\leq C.$$
\end{coro}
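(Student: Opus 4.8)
The plan is to deduce this immediately from Corollary~\ref{coro:weight2}. Fix once and for all a positive integer $s>\mathbf N$ and set, say, $\varepsilon=1$. It then suffices to verify that every $F\in\mathcal S(\mathbb R^N)$ satisfies the single hypothesis
$$B_1=\|(I-\Delta)^sF(\mathbf x)(1+\|\mathbf x\|)^{\mathbf N/2+\varepsilon}\|_{L^1(dw(\mathbf x))}<\infty,$$
because the constant $C_{s,\varepsilon}$ produced by Corollary~\ref{coro:weight2} is independent of $\mathbf y$, and its conclusion $\sup_{\mathbf y}\|\tau_{\mathbf y}F\|_{L^1(dw)}\le C_{s,\varepsilon}B_1$ is exactly what we want (with the final constant $C=C_{s,\varepsilon}B_1$ allowed to depend on $F$).

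First I would argue that $(I-\Delta)^sF$ is again a Schwartz function. On the Dunkl transform side the operator $I-\Delta$ acts as multiplication by the polynomial $1+\|\xi\|^2$, since $\mathcal F(\Delta f)(\xi)=-\|\xi\|^2\mathcal Ff(\xi)$. Because $\mathcal F$ preserves $\mathcal S(\mathbb R^N)$ (as recorded in the Preliminaries) and multiplication by a polynomial preserves $\mathcal S(\mathbb R^N)$, the function
$$(I-\Delta)^sF=\mathcal F^{-1}\bigl((1+\|\xi\|^2)^s\,\mathcal FF\bigr)$$
lies in $\mathcal S(\mathbb R^N)$. In particular $F\in C^{2s}(\mathbb R^N)\cap L^1(dw)$ as required, and $h:=(I-\Delta)^sF$ enjoys the rapid decay $|h(\mathbf x)|\le C_L(1+\|\mathbf x\|)^{-L}$ for every $L>0$.

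Next I would estimate the weighted integral using this decay together with the polynomial growth of $dw$. By homogeneity~\eqref{eq:homo} we have $w(B(0,r))\sim r^{\mathbf N}$, so the $dw$-measure of the dyadic annulus $\{2^k\le\|\mathbf x\|<2^{k+1}\}$ is $\lesssim 2^{k\mathbf N}$. On that annulus $|h(\mathbf x)|(1+\|\mathbf x\|)^{\mathbf N/2+\varepsilon}\lesssim 2^{-kL}2^{k(\mathbf N/2+\varepsilon)}$, whence the contribution of the annulus to $B_1$ is $\lesssim 2^{k(3\mathbf N/2+\varepsilon-L)}$. Choosing $L>3\mathbf N/2+\varepsilon$ makes the geometric series over $k\ge0$ convergent, while the bounded region $\{\|\mathbf x\|\le1\}$ contributes only a finite amount; hence $B_1<\infty$.

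There is no real obstacle here: the entire analytic content has already been packaged into Corollary~\ref{coro:weight2}, and what remains is the routine verification that a Schwartz function survives the weight $(1+\|\mathbf x\|)^{\mathbf N/2+\varepsilon}$ against the measure $dw$ of homogeneous dimension $\mathbf N$. The only point worth flagging is the passage through the Dunkl transform to see that $(I-\Delta)^sF$ remains Schwartz; this rests on the preservation of $\mathcal S(\mathbb R^N)$ by $\mathcal F$ and by multiplication by $(1+\|\xi\|^2)^s$.
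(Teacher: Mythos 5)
Your proposal is correct and matches the paper's intended argument: the corollary is stated there without proof precisely because it is the immediate specialization of Corollary~\ref{coro:weight2} to Schwartz functions, which is exactly your route. Your two verifications --- that $(I-\Delta)^sF=\mathcal F^{-1}\bigl((1+\|\xi\|^2)^s\,\mathcal FF\bigr)$ remains Schwartz, and that rapid decay beats the weight $(1+\|\mathbf x\|)^{\mathbf N/2+\varepsilon}$ against the measure $dw$ of homogeneous dimension $\mathbf N$ via dyadic annuli --- are sound and supply precisely the routine details the paper leaves implicit.
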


\begin{teo}
Let $\Phi \in \mathcal{S}(\mathbb{R}^N)$. Then the maximal function 
\begin{align*}
\mathcal{M}_{\Phi}f(\mathbf{x})=\sup_{t>0}|\Phi_t * f(\mathbf{x})|=\sup_{t>0}\left|\int \Phi_t(\mathbf{x},\mathbf{y})f(\mathbf{y})\,dw(\mathbf{y})\right|,
\end{align*}
where $\Phi_t(\mathbf{x})=t^{-\mathbf{N}}\Phi(t^{-1}\mathbf{x})$, is of weak type $(1,1)$ and bounded on $L^p(dw)$ for $1<p\leq\infty$.
\end{teo}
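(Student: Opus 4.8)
The plan is to establish, in order, the trivial $L^\infty$ bound, then $L^2$-boundedness, then weak type $(1,1)$, and finally to deduce strong type $(p,p)$ for $1<p<\infty$ by interpolating weak $(1,1)$ against $L^\infty$ (Marcinkiewicz applies, $\mathcal M_\Phi$ being sublinear). The $L^\infty$ bound is immediate: by the scaling property of the Dunkl translation, $\|\Phi_t(\mathbf x,\cdot)\|_{L^1(dw)}=\|\tau_{t^{-1}\mathbf x}\Phi(-\,\cdot)\|_{L^1(dw)}$, which is bounded uniformly in $\mathbf x,t$ by Corollary~\ref{coro:Schart_transl}; hence $\|\mathcal M_\Phi f\|_{L^\infty}\le C\|f\|_{L^\infty}$, settling $p=\infty$. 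To isolate the low-frequency obstruction I would write $\Phi=\Phi_0+a\,h_1$ with $a=\int\Phi\,dw$, so that $\int\Phi_0\,dw=0$, i.e. $\mathcal F\Phi_0(0)=0$. Since $(h_1)_t=h_{t^2}$, the heat piece is $\mathcal M_{h_1}f=\sup_{t>0}|e^{t^2\Delta}f|$, and the Gaussian bound \eqref{eq:Gauss}, whose decay is governed by the orbit distance $d$, together with $\mathcal O(B(\mathbf x,r))=\bigcup_{\sigma\in G}B(\sigma\mathbf x,r)$ and the $G$-invariance of $dw$, dominates it pointwise by $C\sum_{\sigma\in G}M_{\mathrm{HL}}f(\sigma\mathbf x)$, where $M_{\mathrm{HL}}$ is the Hardy--Littlewood maximal operator on the doubling space $(\mathbb R^N,\|\cdot\|,dw)$; the Coifman--Weiss theory and $G$-invariance then yield weak $(1,1)$ and $L^p$-boundedness of $\mathcal M_{h_1}$.

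For the mean-zero part I would prove $L^2$-boundedness by a square-function estimate. Using $\Phi_{0,t}*f\to0$ as $t\to0$ and the fundamental theorem of calculus in $t$,
\begin{equation*}
\sup_{t>0}|\Phi_{0,t}*f|^2\le 2\Big(\int_0^\infty|\Phi_{0,s}*f|^2\,\tfrac{ds}{s}\Big)^{1/2}\Big(\int_0^\infty|\Psi_s*f|^2\,\tfrac{ds}{s}\Big)^{1/2},\qquad \Psi_s=s\partial_s\Phi_{0,s},
\end{equation*}
and by Plancherel for $\mathcal F$ each factor is controlled by $\sup_{\xi\ne0}\int_0^\infty|\mathcal F\Phi_0(s\xi)|^2\,\tfrac{ds}{s}$ and its $\Psi$-analogue. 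The vanishing $\mathcal F\Phi_0(0)=0$ removes the low-frequency divergence and makes both integrals finite and uniformly bounded, so $\|\mathcal M_{\Phi_0}f\|_{L^2}\le C\|f\|_{L^2}$. Combined with the heat part this gives $L^2$-boundedness of $\mathcal M_\Phi$.

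The main work is weak $(1,1)$, for which I would run a Calderón--Zygmund decomposition $f=g+\sum_j b_j$ at height $\lambda$ on the space of homogeneous type, with $b_j$ supported in $B_j=B(\mathbf y_j,r_j)$, $\int b_j\,dw=0$, $\|b_j\|_{L^1(dw)}\le C\lambda w(B_j)$, and $\sum_j w(B_j)\le C\lambda^{-1}\|f\|_{L^1(dw)}$. The good part is handled by the $L^2$ bound above. Setting $B_j^*=\mathcal O(B(\mathbf y_j,Cr_j))$, the exceptional set $\bigcup_j B_j^*$ has measure $\le C\lambda^{-1}\|f\|_{L^1(dw)}$ by doubling and $G$-invariance, so it remains to bound $\sum_j\int_{(B_j^*)^c}\mathcal M_\Phi b_j\,dw$ by $C\|f\|_{L^1(dw)}$. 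Here I would use the factorization $\Phi_t=f'_t*g'_t$ with $g'$ radial and rapidly decreasing and $f'=(I-\Delta)^s\Phi$, as in Corollary~\ref{coro:weight2}, so that the scale-sensitive weighted tail estimate of Proposition~\ref{propo:weights} (equivalently Corollary~\ref{coro:weight}) applies to the origin-centered kernel $\tau_{-\mathbf y}\Phi_t=f'_t*g'_t$; by the triangle inequality for $d$ one has $(B_j^*)^c\subseteq\mathcal O(B(\mathbf y,cr_j))^c$ for $\mathbf y\in B_j$, and more generally one works on the orbit-annuli $\{2^kr_j<d(\mathbf x,\mathbf y_j)\le 2^{k+1}r_j\}$ to extract decay in the index $k$.

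The hard part will be controlling the supremum over $t$ uniformly. On each orbit-annulus I would split the range of $t$ at the scale $2^kr_j$, passing from $\sup_t$ to $\int(|\Phi_t*b_j|+|t\partial_t\Phi_t*b_j|)\,\tfrac{dt}{t}$ over dyadic $t$-blocks via the fundamental theorem of calculus, and apply the tail estimate to the Schwartz kernels $\Phi$ and $t\partial_t\Phi$ separately: for $t\lesssim 2^kr_j$ the relation $d(\mathbf x,\mathbf y)\gg t$ makes the kernel's mass outside the orbit-ball of radius $2^kr_j$ small and summable over the dyadic blocks, while for $t\gtrsim 2^kr_j$ I would invoke the cancellation $\int b_j\,dw=0$ against the smoothness in $\mathbf y$ of the radial factor $g'$ (a Hölder bound of the type \eqref{eq:Holder}), gaining a factor $r_j/t$. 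The delicate point is matching the scale $t$ against the annulus radius $2^kr_j$ so that the two regimes combine into a series geometric in $k$; summing over $k$ and over $j$ then closes the bad-part bound, and Marcinkiewicz interpolation with the $L^\infty$ estimate completes the range $1<p<\infty$.
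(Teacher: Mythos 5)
Your route is genuinely different from the paper's, and most of it is sound: the $L^\infty$ bound via Corollary~\ref{coro:Schart_transl} and scaling is correct; the splitting $\Phi=\Phi_0+a\,h_1$, the domination of the heat maximal function by $\sum_{\sigma\in G}\mathcal M_{\rm HL}f(\sigma(\cdot))$ via \eqref{eq:Gauss}, and the square-function $L^2$ bound (which works because the Dunkl transform is a Plancherel isometry and $\mathcal F\Phi_{0,s}(\xi)=\mathcal F\Phi_0(s\xi)$, with $|\mathcal F\Phi_0(\eta)|\lesssim\min(\|\eta\|,\|\eta\|^{-1})$) all go through, modulo the standard caveat that the fundamental-theorem-of-calculus step should be run on a dense class where $\Phi_{0,s}*f(\mathbf x)\to 0$ pointwise as $s\to 0$, to avoid circularity with a.e.\ convergence. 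For contrast, the paper avoids all of this: it proves the single pointwise estimate $|\Phi_t(\mathbf x,\mathbf y)|\leq C\,w(B(\mathbf x,t))^{-1}(1+d(\mathbf x,\mathbf y)/t)^{-\mathbf N-\delta}$ by factoring $\Phi_t=\Phi^{\{1\}}_t*g_t*g_t$ with $g=c_k\mathcal F^{-1}((1+\|\cdot\|^2)^{-\mathbf N})$, splitting the $\mathbf z$-integral according to $d(\mathbf x,\mathbf y)\leq 2d(\mathbf x,\mathbf z)$ or $d(\mathbf x,\mathbf y)\leq 2d(\mathbf y,\mathbf z)$, and invoking Corollary~\ref{coro:weight}; pointwise majorization by the Hardy--Littlewood operator then yields weak $(1,1)$ and all $L^p$, $1<p\leq\infty$, at once, with no Calder\'on--Zygmund decomposition, no mean-zero reduction, and no supremum-in-$t$ gymnastics.

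The genuine gap in your plan is the cancellation regime $t\gtrsim 2^kr_j$ of the bad-part estimate. There you need an integrated H\"older bound of the form $\int|\Phi_t(\mathbf x,\mathbf y)-\Phi_t(\mathbf x,\mathbf y_j)|\,dw(\mathbf x)\lesssim \|\mathbf y-\mathbf y_j\|/t$, and you propose to get it from ``a H\"older bound of the type \eqref{eq:Holder}'' for the radial factor $g'=\mathcal F^{-1}((1+\|\cdot\|^2)^{-s})$. But \eqref{eq:Holder} is specific to the heat kernel, and the mechanism by which Section~\ref{sec:multipliers} transfers it to other kernels --- writing $\mathcal F^{-1}m_\ell=(\mathcal F^{-1}m_{\ell,1})*h_1$ with $m_{\ell,1}=m_\ell e^{\|\xi\|^2}$ --- works only because the pieces $m_\ell$ are compactly supported in frequency; for your $g'$ (or for $\Phi$ itself) the function $e^{\|\xi\|^2}(1+\|\xi\|^2)^{-s}$ blows up, so no such factorization through $h_1$ exists, and the paper provides no regularity-in-$\mathbf y$ estimate for translates of general radial kernels (Dunkl derivatives $T_j$ commute with $\tau_{\mathbf x}$, but they do not control ordinary difference quotients near the reflection hyperplanes). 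To close this you would have to either prove a gradient bound for $\tau_{\mathbf x}g'(-\mathbf y)$ directly from R\"osler's formula \eqref{eq:translation-radial} (differentiating $A(\mathbf x,\mathbf y,\eta)$ under $d\mu_{\mathbf x}$, using $|\nabla_{\mathbf y}A|\leq 1$), or decompose $\mathcal F\Phi$ into dyadic frequency pieces and re-run the Section~\ref{sec:multipliers} estimates with summable constants --- or simply adopt the paper's pointwise-majorization argument, which renders the entire Calder\'on--Zygmund apparatus unnecessary for this theorem.
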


\begin{proof}
It is enough to prove that there is a constant $C=C_{\Phi}>0$ such that $$\mathcal{M}_{\Phi}f(\mathbf{x}) \leq C\sum_{\sigma\in G}\mathcal{M}_{\rm HL}f(\sigma(\mathbf{x})),$$ where $\mathcal{M}_{\rm HL}$ is Hardy--Littlewood maximal function on the space of homogeneous type $(\mathbb R^N, \|\mathbf x-\mathbf y\|, dw)$. To this end it suffices to prove that there are constants $C,\delta>0$ such that for all $\mathbf{x},\mathbf{y} \in \mathbb{R}^N$ we have
\begin{equation}\label{eq:convolution_goal}
|\Phi_t(\mathbf{x},\mathbf{y})| \leq Cw(B(\mathbf{x},t))^{-1}\Big(1+\frac{d(\mathbf{x},\mathbf{y})}{t}\Big)^{-\mathbf{N}-\delta}.
\end{equation}
Let $
g(\mathbf{x})=c_k\mathcal{F}^{-1}((1+\|\cdot\|^2)^{-\mathbf{N}}) 
$. 
The function $g$ is a radial and  satisfies  $|g(\mathbf{x})| \leq C_{1}(1+\|\mathbf{x}\|)^{-2\mathbf{N}-\delta}$, so by~\cite[Corollary 3.10]{conjugate}, we have
\begin{equation}\label{eq:Corollary310_app}
|g_t(\mathbf{x},\mathbf{z})| \leq C_2w(B(\mathbf{x},t))^{-1}\Big(1+\frac{d(\mathbf{x},\mathbf{z})}{t}\Big)^{-s} \text{ for }s\in\{0,\mathbf{N}+\delta\}.
\end{equation}
Set $\Phi^{\{1\}}=\mathcal{F}^{-1}((\mathcal{F}\Phi)(1+\|\cdot\|^2)^{2\mathbf{N}})$.
Then $\Phi^{\{1\}}\in\mathcal S(\mathbb R^N)$, $\Phi_t=\Phi^{\{1\}}_t*g_t*g_t$, and
\begin{align*}
|\Phi_{t}(\mathbf{x},\mathbf{y})| \leq \int \Big|g_t(\mathbf{x},\mathbf{z})(\Phi^{\{1\}}_t*g_t)(\mathbf{z},\mathbf{y})\Big|\,dw(\mathbf{z}) \leq \int_{d(\mathbf{x},\mathbf{y}) \leq 2d(\mathbf{x},\mathbf{z})}+\int_{d(\mathbf{x},\mathbf{y}) \leq 2d(\mathbf{y},\mathbf{z})}=I_1+I_2.
\end{align*}
Now,~\eqref{eq:Corollary310_app} with $s=\mathbf{N}+\delta$ and Corollary~\ref{coro:weight} with $\delta'=0$ lead  to
\begin{align*}
I_1 &\leq C_2w(B(\mathbf{x},t))^{-1}\int_{d(\mathbf{x},\mathbf{y}) \leq 2d(\mathbf{x},\mathbf{z})} \Big(1+\frac{d(\mathbf{x},\mathbf{z})}{t}\Big)^{-\mathbf{N}-\delta}|(\Phi^{\{1\}}_t*g_t)(\mathbf{z},\mathbf{y})|\,dw(\mathbf{z})
\\ &\leq C_3w(B(\mathbf{x},t))^{-1}\Big(1+\frac{d(\mathbf{x},\mathbf{y})}{t}\Big)^{-\mathbf{N}-\delta}\int |(\Phi^{\{1\}}_t*g_t)(\mathbf{z},\mathbf{y})|\,dw(\mathbf{z}) \\& \leq C_4 w(B(\mathbf{x},t))^{-1}\Big(1+\frac{d(\mathbf{x},\mathbf{y})}{t}\Big)^{-\mathbf{N}-\delta}.
\end{align*}
Further,
\begin{align*}
I_2 \leq  C_5\Big(1+\frac{d(\mathbf{x},\mathbf{y})}{t}\Big)^{-\mathbf{N}-\delta}\int|g_t(\mathbf{x},\mathbf{z})(\Phi^{\{1\}}_t*g_t)(\mathbf{z},\mathbf{y})|\Big(1+\frac{d(\mathbf{y},\mathbf{z})}{t}\Big)^{\mathbf{N}+\delta}\,dw(\mathbf{z}),
\end{align*}
so by~\eqref{eq:Corollary310_app} with $s=0$ and Corollary~\ref{coro:weight} with $\delta'=\mathbf{N}+\delta$, we obtain~\eqref{eq:convolution_goal}.
\end{proof}

\section{Multipliers}\label{sec:multipliers}
Let $m$ be a function defined on $\mathbb R^N$. In this section we
assume that there exists $s>\mathbf N$ such that the multiplier $m$
satisfies~\eqref{eq:assumption}. 

Fix  $\phi$ a radial $C^\infty$ function on $\mathbb R^N$ supported in the annulus $\{\xi\in\mathbb R^N: 1\slash 2\leq \| \xi\|\leq 2  \}$ such that 
$1=\sum_{\ell \in \mathbb{Z}}\phi(2^{-\ell}\xi)$. We define $m_\ell(\xi)$, $m_{\ell,1}(\xi)$, and $m_{\ell, 2}(\xi)$ as follows: 
\begin{equation}\label{eq:function_m_l} m_{\ell}(\xi)=m(2^{\ell}\xi)\phi(\xi)=m_{\ell, 1}(\xi)e^{-\|\xi\|^2}=m_{\ell,2}(\xi)e^{-\|\xi\|^2}e^{-\|\xi\|^2}.
\end{equation}
{
By assumption~\eqref{eq:assumption} there is $C>0$ such that 
$$\sup_{\ell\in \mathbb Z}  \| m_\ell\|_{W^s_2}\leq CM. $$
Proposition 5.3 of \cite{ABDH} asserts that for any real numbers $\alpha >\beta >0$ there is a constant $C=C_{\alpha,\beta}$ such that 
\begin{equation}\label{eq:insertion}
\| \mathcal Fm_{\ell} (\mathbf x)(1+\| \mathbf x\|)^\beta \|_{L^2(dw(\mathbf x))} \leq C \| m_{\ell} \|_{W^\alpha_2} = C \| \widehat m_{\ell}(\mathbf x) (1+\| \mathbf x\|)^{\alpha} \|_{L^2(d\mathbf x)},\end{equation}
where $\widehat m_{\ell}$ denotes the classical Fourier transform of $m_{\ell}$.

Set
\begin{equation}\label{eq:tilde_K}
\tilde K_\ell (\mathbf x,\mathbf y)=\tau_{-\mathbf y} (\mathcal F^{-1} m_\ell )(\mathbf x),
\end{equation}
\begin{equation}\label{eq:just_K}
K_\ell (\mathbf{x},\mathbf{y})=\tau_{-\mathbf{y}}\mathcal{F}^{-1}(m(\cdot)\phi(2^{-\ell} \cdot))(\mathbf{x}).
\end{equation}
By homogeneity, 
\begin{equation}\label{eq:KK} K_\ell (\mathbf x,\mathbf  y)=2^{\mathbf N\ell}\tilde K_\ell (2^\ell \mathbf x, 2^{\ell} \mathbf y).
\end{equation}
Obviously, $\tilde K_\ell (\mathbf x,\mathbf y) $ and $K_\ell(\mathbf x,\mathbf y)$ are $C^\infty$ functions of $\mathbf x$, $\mathbf y$, since $m_\ell$ is, by assumption~\eqref{eq:assumption}, a bounded compactly supported function.  

Let $\mathcal{T}_m$ and $\mathcal{T}_{\ell}$ denote the Dunkl multiplier operators associated with $m$ and $m_\ell(2^{-\ell}\cdot)=\phi(2^{-\ell} \cdot)m(\cdot )$ respectively. 
Obviously, for $f\in L^2(dw)$ one has
\begin{equation}\label{eq:kernel_for_cutoff}
\mathcal{T}_\ell f(\mathbf x)=\int K_\ell (\mathbf x,\mathbf y) f(\mathbf y)\, dw(\mathbf y).
\end{equation}
Clearly, $\| m_{\ell, 1}\|_{W^s_2}\leq CM$. Using \eqref{eq:insertion} and the Cauchy--Schwarz inequality  together with \eqref{eq:homo},  we deduce that for every  $\delta'\geq 0$ such that  $\mathbf N+\delta'< s $ we have 
$$ \| \mathcal F^{-1}m_{\ell, 1}(\mathbf x)(1+\| \mathbf x\|)^{\mathbf N\slash 2+\delta'} \|_{L^1(dw(\mathbf x))}\leq C_{\delta', \alpha}\| m_{\ell,1} \|_{W^s_2}\leq CM.$$
Recall that
\begin{equation}\label{eq:cnv} \mathcal F^{-1} m_\ell (\mathbf x)= (\mathcal F^{-1} m_{\ell, 1})* h_1(\mathbf x),
\end{equation}
where $h_1(\mathbf x)=2^{-\mathbf{N}/2}c_k^{-1}\exp (-\| \mathbf x\|^2\slash 4)$ (see~\eqref{eq:heat_semigroup}).
Therefore, by Corollary~\ref{coro:weight} together with~\eqref{eq:insertion} we have
\begin{equation}\label{eq:m_ell} \begin{split}\int |\tilde K_\ell (\mathbf x,\mathbf y)|(1+d(\mathbf x,\mathbf y))^{\delta'}\,dw(\mathbf x) & =
\int |\tau_{-\mathbf y} (\mathcal{F}^{-1}m_\ell )(\mathbf x)|(1+ d(\mathbf x,\mathbf y))^{\delta'}\, dw(\mathbf x)\\& \leq C_{\delta', s} \| m_{\ell, 1}\|_{W^s_2}\leq CM.
\end{split}\end{equation}
By the same arguments with $\delta'=0$ we obtain 
\begin{equation}\label{eq:m_ell_1}
\int |\tau_{-\mathbf y}(\mathcal F^{-1}m_{\ell,1}) (\mathbf x)|\, dw(\mathbf x)\\
\leq C_{s,\delta'} \| m_{\ell, 2}\|_{W^s_2}\leq CM.
\end{equation}
From~\eqref{eq:KK} and~\eqref{eq:m_ell} we conclude
\begin{equation}\label{eq:K_ell_delta}
\int |K_\ell (\mathbf x,\mathbf y)|(1+d(\mathbf x,\mathbf y))^{\delta'} \, dw(\mathbf x)\leq CM 2^{-\delta'\ell}.
\end{equation}
On the other hand, using \eqref{eq:cnv}  together with \eqref{eq:m_ell_1} we get 
\begin{equation}\begin{split}\label{eq:Holder_tilde}
\int |\tilde K_\ell(\mathbf x,\mathbf y)& -\tilde K_{\ell}(\mathbf x,\mathbf y')|\, dw(\mathbf x)\\
&
= \int \Big|\int \tau_{-\mathbf z}(\mathcal F^{-1} m_{\ell ,1})(\mathbf x)\Big(h_1(\mathbf z,\mathbf y)-
h_1(\mathbf z,\mathbf y')\Big)\, dw(\mathbf z)\Big| \, dw(\mathbf x)\\
& \leq C\| m_{\ell , 2}\|_{W^s_2} \int \Big| h_1(\mathbf x,\mathbf y)-h_1(\mathbf z,\mathbf y')\Big|\, dw(\mathbf z)\\
&\leq C' M \| \mathbf y-\mathbf y'\|,
\end{split}
\end{equation}
where in the last inequality we have used~\eqref{eq:Holder}.
From \eqref{eq:KK} and \eqref{eq:Holder_tilde} we easily deduce
\begin{equation}\begin{split}\label{eq:Holder_int}
\int |K_\ell(\mathbf x,\mathbf y)& - K_{\ell}(\mathbf x,\mathbf y')|\, dw(\mathbf x)
\leq C M 2^\ell \| \mathbf y-\mathbf y'\|.
\end{split}
\end{equation}
For a cube $Q\subset \mathbb R^N$ let $c_Q$ be its center and  $\diam (Q)$ be the length of its diameter.  Let $Q^{*}$ denote the cube with center $c_Q$ such that $\diam(Q^{*})=2\diam(Q)$.  The following proposition is a direct consequence of   \eqref{eq:K_ell_delta} and \eqref{eq:Holder_int}.
\begin{propo}\label{propo:integral} There are constants $C, \delta'>0$ such that for every cube $Q\subset\mathbb R^N$ and $\mathbf y,\mathbf y'\in Q$ we have
\begin{equation*}
\int_{\mathbb R^N\setminus \mathcal O (Q^{*})} |K_\ell(\mathbf x,\mathbf y) - K_{\ell}(\mathbf x,\mathbf y')|\, dw(\mathbf x)
\leq C M \min \Big((2^{\ell} \text{\rm diam}(Q))^{-\delta'} , 2^\ell \text{\rm diam}(Q)\Big).
\end{equation*}
\end{propo}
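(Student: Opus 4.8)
The plan is to prove Proposition~\ref{propo:integral} by combining the two inequalities that precede it, namely the weighted kernel estimate~\eqref{eq:K_ell_delta} and the H\"older-type estimate~\eqref{eq:Holder_int}, using each where it gives the sharper bound. The statement itself is a minimum of two quantities, so the natural strategy is to establish \emph{both} bounds separately over the set $\mathbb{R}^N \setminus \mathcal{O}(Q^*)$ and then take whichever is smaller for a given value of $2^\ell \diam(Q)$.

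First I would extract the decay bound. Fix a cube $Q$ with center $c_Q$ and note that for $\mathbf{y} \in Q$ and $\mathbf{x} \notin \mathcal{O}(Q^*)$ the orbit distance $d(\mathbf{x},\mathbf{y})$ is comparable to $d(\mathbf{x},c_Q)$ and is bounded below by a constant multiple of $\diam(Q)$; this is exactly where passing to the orbit $\mathcal{O}(Q^*)$ of the doubled cube matters, since it guarantees that $\mathbf{x}$ is separated from \emph{all} points of the orbit $\mathcal{O}(\mathbf{y})$, not merely from $\mathbf{y}$. Consequently, on this region one has $(1 + d(\mathbf{x},\mathbf{y}))^{\delta'} \geq c\,(2^\ell \diam(Q))^{\delta'} \cdot 2^{-\delta'\ell}$ up to constants, so that integrating the triangle-inequality bound $|K_\ell(\mathbf{x},\mathbf{y})| + |K_\ell(\mathbf{x},\mathbf{y}')|$ against this weight and invoking~\eqref{eq:K_ell_delta} for both $\mathbf{y}$ and $\mathbf{y}'$ yields
\begin{equation*}
\int_{\mathbb{R}^N \setminus \mathcal{O}(Q^*)} |K_\ell(\mathbf{x},\mathbf{y}) - K_\ell(\mathbf{x},\mathbf{y}')|\, dw(\mathbf{x}) \leq C M\, (2^\ell \diam(Q))^{-\delta'}.
\end{equation*}
This handles the first term in the minimum and governs the regime $2^\ell \diam(Q) \geq 1$, i.e.\ when the frequency scale $2^\ell$ is large relative to the cube.

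For the complementary regime, the H\"older estimate~\eqref{eq:Holder_int} applies directly: since $\mathbf{y},\mathbf{y}' \in Q$ we have $\|\mathbf{y}-\mathbf{y}'\| \leq \diam(Q)$, and extending the integral from $\mathbb{R}^N \setminus \mathcal{O}(Q^*)$ to all of $\mathbb{R}^N$ only enlarges it, so~\eqref{eq:Holder_int} immediately gives the bound $CM\,2^\ell \diam(Q)$. Combining the two displays produces the stated minimum, and I would finish by remarking that the choice of which bound to use is dictated purely by comparing $2^\ell\diam(Q)$ to $1$. I expect the only genuinely nontrivial point to be the first step's geometric comparison of distances: one must verify that removing the orbit $\mathcal{O}(Q^*)$ (rather than just $Q^*$) is what makes $d(\mathbf{x},\mathbf{y})\gtrsim \diam(Q)$ hold uniformly for all $\mathbf{y}\in Q$, since $d$ measures distance between orbits and a point outside $Q^*$ could still be close to a reflected copy $\sigma(\mathbf{y})$ unless $\mathbf{x}$ avoids the entire orbit $\mathcal{O}(Q^*)$. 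Everything else is a direct substitution into the two already-established kernel inequalities.
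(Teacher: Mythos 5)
Your proposal is correct and follows essentially the same route as the paper, which presents Proposition~\ref{propo:integral} precisely as a direct consequence of the weighted bound~\eqref{eq:K_ell_delta} (combined with the triangle inequality and the observation that $\mathbf{x}\notin\mathcal{O}(Q^{*})$ forces $d(\mathbf{x},\mathbf{y})\gtrsim\diam(Q)$ for all $\mathbf{y}\in Q$, which you rightly flag as the reason the orbit of $Q^*$ must be removed) and of the H\"older estimate~\eqref{eq:Holder_int} with $\|\mathbf{y}-\mathbf{y}'\|\leq\diam(Q)$. Taking the minimum of the two resulting bounds, as you do, is exactly the intended argument.
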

}

\section{Proof of Theorem\texorpdfstring{ ~\ref{teo:main_teo}~\eqref{numitem:weak_type} and~\eqref{numitem:strong_type}}{1.1 (A)and (B)}}\label{sec:section_proof}
\begin{proof} Having Proposition \ref{propo:integral} already established, the proof of weak type $(1,1)$ of the multiplier operator $\mathcal T_m$ follows the standard pattern. 
Clearly, there is a constant $C_1>1$, which depends on the doubling constant and $N$, such that
$w(Q)\leq C_1 w(Q')$, where $Q'$ is any sub-cube of $Q$, $\ell (Q')=\ell (Q)\slash 2$, where $\ell (Q)$ denote the side length of $Q$.
\\
Let $f\in L^1(dw)\cap L^2(dw)$. Fix $\lambda>0$. Denote by $\mathcal Q_\lambda$ the collection of all maximal (disjoint)  dyadic cubes $Q_j$ in $\mathbb R^N$ satisfying
 $$ \lambda< \frac{1}{w(Q_j)}\int_{Q_j}|f(\mathbf{x})|\,dw(\mathbf{x}).$$
 Then 
$$ \frac{1}{w(Q_j)} \int_{Q_j}|f(\mathbf{x})|\,dw(\mathbf{x})\leq C_1\lambda.$$
   Set $\Omega=\bigcup_{Q_j\in\mathcal Q_\lambda} Q_j$. Then $w(\Omega)\leq \lambda^{-1}\| f\|_{L^1(dw)}$.  Form the corresponding Calder\'on--Zygmund decomposition of $f$, namely, $f=g+b$, where

\begin{equation*}\label{eq:good}
g(\mathbf{x})=f\chi_{\Omega^c}(\mathbf{x})+\sum_{j}w(Q_j)^{-1}\Big(\int_{Q_j}f(\mathbf{y})\,dw(\mathbf{y})\Big)\chi_{Q_j}(\mathbf{x}),
\end{equation*}
\begin{equation*}\label{eq:bad}
b(\mathbf{x})=\sum_{j}b_j(\mathbf{x}), \text{ where }b_j(\mathbf{x})=\left(f(\mathbf{x})-w(Q_j)^{-1}\int_{Q_j}f(\mathbf{y})\,dw(\mathbf{y})\right)\chi_{Q_j}(\mathbf{x}).
\end{equation*}
Clearly, $g,b\in L^1(dw)\cap L^2(dw)$, $|g(\mathbf x)|\leq C_1\lambda$, $\| g\|_{L^2(dw)}^2 \leq C\lambda \| f\|_{L^1}$, $\sum_j \| b_j\|_{L^1(dw)}\leq C\| f\|_{L^1(dw)}$.  
Further,
\begin{align*}
w(\{\mathbf{x} \in \mathbb{R}^N\,:\,|\mathcal{T}_{m}f(\mathbf{x})|> \lambda) &\leq w(\{\mathbf{x} \in \mathbb{R}^N\,:\,|\mathcal{T}_{m}g(\mathbf{x})|>\lambda/2\})\\&+w(\{\mathbf{x} \in \mathbb{R}^N\,:\,|\mathcal{T}_{m}b(\mathbf{x})|>\lambda/2\}).
\end{align*}
Since $\mathcal{T}_m$ is bounded on $L^2(dw)$,  we obtain
\begin{align*}
w(\{\mathbf{x} \in \mathbb{R}^N\,:\,|\mathcal{T}_{m}g(\mathbf{x})|>\lambda/2\}) \leq \frac{4}{\lambda^{2}}\|m\|_{L^{\infty}}\|g\|_{L^2(dw)}^2\leq \frac{4C_1}{\lambda}\|m\|_{L^{\infty}}\|f\|_{L^1(dw)}.
\end{align*}
Let $Q_j^{*}$ be the cube with the same center $c_{Q_j}$ as $Q_j$ and two times larger side-length. Define $\Omega^{*}=\mathcal{O}\Big(\bigcup_{j}Q_j^{*}\Big)$. There is a constant $C_2>1$, which depends on the Weyl group, doubling constant, and $N$ such that
\begin{align*}
w(\Omega^{*}) \leq C_2w(\Omega) \leq C_2\lambda^{-1}\|f\|_{L^1(dw)}.
\end{align*}
Thus it suffices to estimate $\mathcal T_m b(\mathbf x)$ on $\mathbb R^N\setminus \Omega^*$. Since $\sum_{j}b_j$ converges to $b$ in $L^2(dw)$ and $\mathcal{T}_{m}b=\sum_{\ell \in \mathbb{Z}}\mathcal{T}_\ell b$ in the $L^2(dw)$-norm, we have
\begin{align*}
|\mathcal{T}_{m}b(\mathbf{x})| \leq \sum_{j}\sum_{\ell \in \mathbb{Z}}|\mathcal{T}_{\ell}b_j(\mathbf{x})|.
\end{align*}
By~\eqref{eq:kernel_for_cutoff} and the fact that $\supp b_j \subseteq Q_j$ and $\int b_j(\mathbf{y})\,dw(\mathbf{y})=0$,  we have
\begin{equation}\label{eq:int_ell_j}\begin{split}
&\int_{\mathbb{R}^N \setminus \Omega^{*}}|\mathcal{T}_{\ell}b_j(\mathbf{x})|\,dw(\mathbf{x})
 =\int_{\mathbb{R}^N \setminus \Omega^{*}}\left|\int_{Q_j} K_{\ell}(\mathbf{x},\mathbf{y})b_{j}(\mathbf{y})\,dw(\mathbf{y})\right|\,dw(\mathbf{x})\\
&\leq \int_{\mathbb{R}^N \setminus \mathcal O(Q_j^{*})}\left|\int_{Q_j}
 \Big(K_{\ell}(\mathbf{x},\mathbf{y})-K_\ell (\mathbf x,c_{Q_j})\Big)b_{j}(\mathbf{y})\,dw(\mathbf{y})\right|\,dw(\mathbf{x})\\
& \leq C M \min \Big((2^{\ell} \text{\rm diam} (Q_j))^{-\delta} , 2^\ell \text{\rm diam}(Q_j)\Big)\| b_j\|_{L^1(dw)}, 
\end{split}\end{equation}
where  in the last inequality we have used Proposition \ref{propo:integral}. Summing the inequalities~\eqref{eq:int_ell_j} over $j$ and $\ell$ we end up with 
$$\int_{\mathbb R^N\setminus \Omega^*} | \mathcal T_mb(\mathbf x)|dw(\mathbf x)\leq CM \sum_{j} \| b_j\|_{L^1}\leq CM \| f\|_{L^1(dw)},  $$
which, by the Chebyshev inequality, completes the proof of weak type $(1,1)$ of the operator $\mathcal T_m$. 

The strong type $(p,p)$ of $\mathcal T_m$ follows from the Marcinkiewicz interpolation theorem and a duality argument. \end{proof}

\section{Proof of Theorem\texorpdfstring{ \ref{teo:main_teo}~\eqref{numitem:hardy}}{1.1 (C)}}
\label{sec:Hardy}

Hardy spaces $H^1_{\Delta}$ in the Dunkl setting were studied in~\cite{conjugate},~\cite{Dziubanski18}, and for product systems of roots in~\cite{ABDH}. They are extensions of the classical Hardy spaces on  $\mathbb R^N$ introduced and developed in~\cite{SW},~\cite{FS}  (see also~\cite{St2}). 

We start this section by presenting three equivalent characterizations of the Hardy space $H^1_{\Delta}$ associated with the Dunkl theory. Then we shall prove Theorem~\ref{teo:main_teo}~\eqref{numitem:hardy}.

\begin{defn}\label{def:C-W-atoms} \normalfont
A function $a(\mathbf x)$ is an atom ($(1,\infty)$-atom) if there is a Euclidean ball $B$ such that
\begin{enumerate}[(A)]
 \item{$\supp a\subseteq B$;}\label{numitem:support}

 \item{$\| a\|_{L^\infty}\leq w(B)^{-1}$;}\label{numitem:size}

 \item{$\int a(\mathbf x)\, dw(\mathbf x)=0$.}\label{numitem:cancellations}
 \end{enumerate}
 \end{defn}

\begin{defn}\label{def:space_C-W} \normalfont A function $f$ belongs to $H^1_{\rm{atom}}$ if there are $\lambda_j\in \mathbb C$ and $(1,\infty)$-atoms $a_j$ such that $f=\sum_{j=1}^{\infty} \lambda_ja_j$ and $\sum_{j=1}^{\infty}|\lambda_j|<\infty$. Then
$$ \| f\|_{H^1_{\rm{atom}}} =\inf \Big\{ \sum_{j=1}^{\infty} |\lambda_j|\Big\}, $$
where the infimum is taken over all representations of $f$ as above.
\end{defn}

\begin{defn} \normalfont
We say that a function $f$ belongs to the real Hardy space $H^1_{\Delta}$  if the nontangential maximal function
\begin{align*}
 \mathcal M f(\mathbf x)=\sup_{\| \mathbf x-\mathbf y\|<t} |\exp(t^2\Delta )f(\mathbf x)|
\end{align*}
belongs to $L^1(dw)$. The space $H^1_{\Delta}$ is a Banach space with the norm
\begin{align*}
\| f\|_{H^1_{{\rm max}, H}}=\| \mathcal M f\|_{L^1(dw)}.
\end{align*}
\end{defn}

The following theorem was proved in~\cite[Theorem 1.6]{Dziubanski18}.
\begin{teo}\label{teo:main1} The spaces $H^1_{\Delta}$ and $H^1_{\rm{atom}}$ coincide and the corresponding norms are equivalent, that is, there is a constant $C>0$ such that
\begin{align}\label{eq:main}
C^{-1} \| f\|_{H^1_{\rm{atom}}}\leq \| f\|_{H^1_{{\rm max}, H}}\leq C \| f\|_{H^1_{\rm{atom}}}.
\end{align}
\end{teo}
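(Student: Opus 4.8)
The plan is to establish the two inequalities in~\eqref{eq:main} separately, i.e. the continuous inclusions $H^1_{\rm atom}\hookrightarrow H^1_\Delta$ and $H^1_\Delta\hookrightarrow H^1_{\rm atom}$. I would first dispose of the easy inclusion $\|\mathcal Mf\|_{L^1(dw)}\le C\|f\|_{H^1_{\rm atom}}$. By the subadditivity of $\mathcal M$ and Definition~\ref{def:space_C-W} it suffices to find a constant $C$, independent of the atom, such that $\|\mathcal Ma\|_{L^1(dw)}\le C$ for every $(1,\infty)$-atom $a$ associated with a ball $B=B(\mathbf x_0,r)$. Writing $e^{t^2\Delta}a=h_{t^2}*a$ and observing that $h_{t^2}=\Phi_t$ for the Gaussian $\Phi(\mathbf x)=c_k^{-1}2^{-\mathbf N/2}e^{-\|\mathbf x\|^2/4}\in\mathcal S(\mathbb R^N)$, the Gaussian bound~\eqref{eq:Gauss} gives, in the same way as in the proof of the maximal theorem for $\mathcal M_\Phi$, the domination $\mathcal Mf(\mathbf x)\le C\sum_{\sigma\in G}\mathcal M_{\rm HL}f(\sigma(\mathbf x))$; in particular $\mathcal M$ is bounded on $L^2(dw)$. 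On the orbit $\mathcal O(2B)$ I would combine this $L^2$ bound with $\|a\|_{L^2(dw)}\le w(B)^{-1/2}$ and the Cauchy--Schwarz inequality, using $w(\mathcal O(2B))\le |G|\,w(2B)\le Cw(B)$. Off the orbit I would exploit the cancellation $\int a\,dw=0$ together with the H\"older estimate~\eqref{eq:Holder}: subtracting the value $h_{t^2}(\cdot,\mathbf x_0)$ of the kernel at the centre produces a factor $r/t$, and integrating it against the Gaussian decay in the orbit distance $d(\mathbf x,\mathbf x_0)$ yields an integrable tail. Summing the two regions gives the uniform bound.

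The reverse inclusion $\|f\|_{H^1_{\rm atom}}\le C\|\mathcal Mf\|_{L^1(dw)}$ is the substantial direction, and here I would follow the Fefferman--Stein scheme adapted to the space of homogeneous type $(\mathbb R^N,\|\mathbf x-\mathbf y\|,dw)$. Since $\mathcal Mf\ge|f|$ almost everywhere, the hypothesis forces $f\in L^1(dw)$, so the Calder\'on reproducing identity $f=c\int_0^\infty t^2\Delta\,e^{t^2\Delta}f\,\frac{dt}{t}$ is available (the spectral multiplier on the right is a constant). I would then form the level sets $\Omega_k=\{\mathbf x:\mathcal Mf(\mathbf x)>2^k\}$, take a Whitney decomposition $\Omega_k=\bigcup_j Q_{k,j}$ with a subordinate partition of unity, and assemble the restrictions of the reproducing integral to the tents over the Whitney cubes into multiples of building blocks. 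The size of each block is controlled on its tent by~\eqref{eq:Gauss} together with the pointwise bound $|e^{t^2\Delta}f|\le\mathcal Mf\le 2^{k}$ at the nearby base points outside $\Omega_k$, the cancellation is automatic because every block has the form $\Delta G$ and $\int_{\mathbb R^N}\Delta G\,dw=0$ by the skew--symmetry of the Dunkl operators, and the $\ell^1$ control of the coefficients reduces to the layer--cake identity $\sum_k 2^k w(\Omega_k)\sim\|\mathcal Mf\|_{L^1(dw)}$.

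The main obstacle is that the Dunkl heat semigroup has no finite propagation speed: the kernel $h_{t^2}(\mathbf x,\mathbf y)$ is spread over the whole $G$-orbit and only decays in the orbit distance $d(\mathbf x,\mathbf y)$. Consequently the blocks produced by the reproducing formula are not compactly supported, and the natural output of the decomposition is a family of \emph{molecules} adapted to the orbit neighbourhoods $\mathcal O(Q_{k,j})$ rather than genuine $(1,\infty)$-atoms supported in single Euclidean balls. To close the argument I would invoke the standard fact, valid on any space of homogeneous type, that a molecule with the correct size, decay and vanishing integral is a sum of $(1,\infty)$-atoms with comparable total norm; the Gaussian and H\"older bounds~\eqref{eq:Gauss}--\eqref{eq:Holder} supply far more orbit-distance decay than this conversion requires. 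Finally, since each orbit neighbourhood splits as $\mathcal O(Q_{k,j})=\bigcup_{\sigma\in G}\sigma(Q_{k,j})$ into $|G|$ Euclidean pieces, one redistributes the vanishing moment among these components to pass from orbit-molecules to Coifman--Weiss atoms, which yields~\eqref{eq:main}.
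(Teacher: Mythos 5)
First, a point of comparison: the paper itself contains no proof of Theorem~\ref{teo:main1} --- it is quoted from \cite[Theorem 1.6]{Dziubanski18} --- so your argument has to stand entirely on its own. Your first inclusion ($H^1_{\rm atom}\hookrightarrow H^1_\Delta$) is essentially correct and uses only tools present in the paper: the domination of the nontangential maximal function by $\sum_{\sigma\in G}\mathcal M_{\rm HL}f\circ\sigma$ via~\eqref{eq:Gauss}, Cauchy--Schwarz on $\mathcal O(2B)$, and the cancellation-plus-H\"older estimate~\eqref{eq:Holder} off the orbit; the tail integral over dyadic orbit-annuli converges since $w(\mathcal O(B(\mathbf x_0,2^k r)))\leq |G|\,w(B(\mathbf x_0,2^k r))$. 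Modulo routine bookkeeping, this half is fine.

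The converse inclusion, however, has a genuine gap exactly where you wave at ``the standard fact, valid on any space of homogeneous type, that a molecule \dots is a sum of $(1,\infty)$-atoms.'' That fact requires decay of the molecule in the \emph{metric of the space}, here the Euclidean distance $\|\mathbf x-\mathbf y\|$; your blocks decay only in the orbit distance $d(\mathbf x,\mathbf y)$, which is not a metric on $\mathbb R^N$ (only on the orbit space $\mathbb R^N/G$), so they are not molecules for $(\mathbb R^N,\|\mathbf x-\mathbf y\|,dw)$ and the conversion theorem does not apply. Your proposed repair --- redistributing the vanishing moment among the pieces $\sigma(Q_{k,j})$ --- carries a quantitative cost you have not controlled: transporting a mean $\mu_\sigma$ from $\sigma(Q)$ to $Q$ across Euclidean distance $D$ by the usual telescoping chain of doubling balls uses about $\log(D/\ell(Q))$ atoms, each with coefficient $\sim|\mu_\sigma|$, so the atomic norm of the correction is $\sim|\mu_\sigma|\log\bigl(\|c_Q\|/\ell(Q)\bigr)$, and this logarithm is unbounded over the Whitney cubes. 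To absorb it you would need the masses $\mu_\sigma$ deposited on distant orbit pieces to be correspondingly small, but the only bound available in this paper, the Gaussian estimate~\eqref{eq:Gauss}, does not decay as $\mathbf x\to\sigma(\mathbf y)$ --- it is blind to the Euclidean separation between $Q$ and $\sigma(Q)$ --- so no such smallness follows. This passage from orbit-supported cancellation to genuine Coifman--Weiss atoms on Euclidean balls is precisely the nontrivial content of \cite{Dziubanski18}, which is why the present paper cites that reference rather than reproducing an argument; as sketched, your proof does not close.
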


\begin{defn}\normalfont
The Riesz transforms are defined in the Dunkl setting by
$$
R_jf=T_j(-{\Delta})^{-1\slash 2}f \text{ for }1 \leq j \leq N.
$$
\end{defn}
The Riesz transforms are bounded operators on $L^p({dw})$, for every $1<p<\infty$ (see \cite{AS},~\cite{ThangaveluXu2}).  In the limit case $p=1$, they turn out to be bounded operators from $H^1_{\Delta}$ into $H^1_{\Delta}\subset L^1({dw})$. This leads to consider the space
$$
H^1_{\rm Riesz}=\{f\in L^1({dw})\,{|}\,\| R_j f\|_{L^1(w)}<\infty\;{\forall\;1\le j\le N}\}.
$$
 The following theorem was proved in~\cite[Theorem 2.11]{conjugate}.
\begin{teo}\label{teo:main4}
The spaces $H^1_{\Delta}$ and $H^1_{\rm Riesz}$ coincide and the corresponding norms $\| f\|_{H^1_{{\rm max}, H}}$ and
$$
\|f\|_{H^1_{\rm Riesz}}:=\|f\|_{L^1({dw})}+\sum\nolimits_{j=1}^N\|R_jf\|_{L^1({dw})}.
$$
are equivalent.
\end{teo}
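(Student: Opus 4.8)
The plan is to prove the two inequalities underlying the claimed norm equivalence separately, namely $\|f\|_{H^1_{\rm Riesz}}\le C\,\|f\|_{H^1_{{\rm max},H}}$ and $\|f\|_{H^1_{{\rm max},H}}\le C\,\|f\|_{H^1_{\rm Riesz}}$. The first (``$H^1_\Delta$ controls the Riesz transforms'') is the softer direction, and I would reduce it, via the atomic description $H^1_\Delta=H^1_{\rm atom}$ of Theorem~\ref{teo:main1}, to a uniform estimate on atoms. The second (``the Riesz data recover $H^1_\Delta$'') is the analytically hard direction, for which I would invoke the theory of conjugate harmonic functions attached to the Dunkl--Poisson semigroup.

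For the first inequality it suffices, by Theorem~\ref{teo:main1} and the $(1,\infty)$-atomic decomposition, to show that $\|R_j a\|_{L^1(dw)}\le C$ with $C$ independent of the atom $a$ (summing over a decomposition $f=\sum_k\lambda_k a_k$ and using the trivial bound $\|f\|_{L^1(dw)}\le C\|f\|_{H^1_{{\rm max},H}}$ then closes this direction). Fix an atom $a$ supported in a ball $B=B(\mathbf x_0,r)$ and split $\int_{\mathbb R^N}|R_ja|\,dw$ into the orbit $\mathcal O(B^{*})$ of a fixed dilate $B^{*}$ of $B$ and its complement. On $\mathcal O(B^{*})$ I would use the $L^2(dw)$-boundedness of $R_j$ (\cite{AS},~\cite{ThangaveluXu2}) together with the Cauchy--Schwarz inequality and doubling; on the complement I would use the cancellation $\int a\,dw=0$ together with the Hölder (Calderón--Zygmund) estimate of the Riesz kernel $R_j(\mathbf x,\mathbf y)$ with respect to the orbit distance $d(\mathbf x,\mathbf y)$ established in~\cite{conjugate}. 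This is exactly the standard Calderón--Zygmund mechanism on the space of homogeneous type $(\mathbb R^N,\|\mathbf x-\mathbf y\|,dw)$.

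For the reverse inequality I would adapt the method of systems of conjugate harmonic functions. Introduce the Dunkl--Poisson semigroup $P_t=e^{-t\sqrt{-\Delta}}$, subordinated to $e^{t\Delta}$, and for $f\in H^1_{\rm Riesz}$ form the vector-valued function $F(\mathbf x,t)=(u_0,u_1,\dots,u_N)$ on the half-space, with $u_0=P_tf$ and $u_j=P_t(R_jf)$. One checks that $F$ satisfies a generalized Cauchy--Riemann system built from the operators $T_{1},\dots,T_{N}$ and $\partial_t$, and that, as a consequence, $|F|^q$ is subharmonic for the operator $\partial_t^2+\Delta$ for every $q\ge q_0$, where the critical exponent is expected to be $q_0=(\mathbf N-1)/\mathbf N<1$. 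Since $f$ and each $R_jf$ lie in $L^1(dw)$ and $P_t$ is uniformly bounded on $L^1(dw)$, the functions $|F(\cdot,t)|^q$ admit a harmonic majorant whose boundary data lie in $L^{1/q}(dw)$ with norm controlled by $\|f\|_{H^1_{\rm Riesz}}$. This yields an $L^1(dw)$ bound for the nontangential maximal function of $u_0=P_tf$, and, after the standard comparison between the Poisson and heat nontangential maximal functions via subordination, the desired estimate $\|\mathcal Mf\|_{L^1(dw)}\le C\|f\|_{H^1_{\rm Riesz}}$.

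The hard part will be the analytic core of the second inequality: establishing the generalized Cauchy--Riemann identities for $(u_0,\dots,u_N)$ and deducing the subharmonicity of $|F|^q$ below the exponent $1$. In the Dunkl setting this is genuinely more delicate than in the Euclidean case, because the $T_j$ are nonlocal difference--differential operators and the measure $dw$ degenerates on the root hyperplanes, so the usual pointwise computation with $\nabla|F|$ must be replaced by one that tracks the reflection terms and pins down the correct critical exponent $q_0$ in terms of the homogeneous dimension $\mathbf N$; constructing the harmonic majorant with the right $L^{1/q}(dw)$ boundary control is where most of the effort goes. By contrast, the comparison between the Poisson and heat maximal functions and the passage from the majorant to an $L^1(dw)$ bound for $\mathcal M f$ are routine applications of the maximal-function theory on $(\mathbb R^N,\|\mathbf x-\mathbf y\|,dw)$.
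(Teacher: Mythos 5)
The paper does not prove this theorem at all: it is imported verbatim from \cite[Theorem 2.11]{conjugate} (``The following theorem was proved in\dots''), so the only meaningful comparison is with that reference. Your outline reconstructs its strategy faithfully. The soft direction via Theorem~\ref{teo:main1}, uniform $L^1(dw)$ bounds on $(1,\infty)$-atoms, the $L^2(dw)$-boundedness of $R_j$ from \cite{AS,ThangaveluXu2}, and Calder\'on--Zygmund-type H\"older estimates for the Riesz kernels relative to the orbit distance $d(\mathbf x,\mathbf y)$, with the error integrated over $(\mathcal O(B^*))^c$, is exactly how this is run in the Dunkl setting (compare the treatment of $\mathcal T_m$ on atoms in Section~\ref{sec:Hardy} of the present paper). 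The hard direction via conjugate systems $(P_tf,P_tR_1f,\dots,P_tR_Nf)$, the generalized Cauchy--Riemann equations, subharmonicity of $|F|^q$ down to the critical exponent $q_0=(\mathbf N-1)/\mathbf N$ with the Euclidean dimension replaced by the homogeneous dimension, and harmonic majorants with $L^{1/q}(dw)$ boundary data is precisely the analytic core (and the title subject) of \cite{conjugate}; you correctly identify the subharmonicity estimate, with the nonlocal difference terms of $\Delta$, as where the real work lies.

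One step is under-billed. You call the comparison between the Poisson and heat nontangential maximal functions a ``routine application'' of subordination, but subordination only gives the easy inequality: it expresses $P_t$ as an average of heat operators and hence bounds the Poisson maximal function by (a grand/nontangential variant of) the heat one. Your argument produces an $L^1(dw)$ bound for the \emph{Poisson} nontangential maximal function, whereas $H^1_\Delta$ is defined here through the \emph{heat} maximal function, and that converse passage is not a subordination one-liner: it requires the equivalence of the Poisson-maximal and heat-maximal (equivalently, atomic) characterizations of $H^1$, which is itself one of the nontrivial theorems of \cite{conjugate} and \cite{Dziubanski18}, proved by running an atomic decomposition for the Poisson-maximal space. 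So your sketch is correct in architecture and matches the cited proof, but to close it you must either cite that equivalence or reprove it; as written, the last step silently assumes a result of the same depth as the one being established.
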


\begin{proof}[Proof of Theorem~\ref{teo:main_teo}~\eqref{numitem:hardy}]
Let us check first that there is $C>0$ such that $\|\mathcal{T}_ma\|_{L^1(dw)} \leq C$ for any atom $a(\cdot)$. Without loosing of generality we can assume that $a(\cdot)$ is associated with a cube $Q$. We have
\begin{align*}
\|\mathcal{T}_ma\|_{L^1(dw)} \leq \|\mathcal{T}_ma\|_{L^1(\mathcal{O}(Q^{*}),dw)}+\|\mathcal{T}_ma\|_{L^1((\mathcal{O}(Q^{*}))^{c},dw)},
\end{align*}
where $Q^{*}$ is the cube with the same center as $Q$ and two times larger side-length. By the Cauchy--Schwarz inequality and property~\eqref{numitem:size} of atom $a(\cdot)$ we have
\begin{align*}
\|\mathcal{T}_ma\|_{L^1(\mathcal{O}(Q^{*}),dw)} \leq w(\mathcal{O}(Q^{*}))^{1/2}\|\mathcal{T}_ma\|_{L^2(\mathcal{O}(Q^{*}),dw)} \leq C_1.
\end{align*}
Thanks to properties~\eqref{numitem:support} and~\eqref{numitem:cancellations} of $a(\cdot)$ we can use Proposition~\ref{propo:integral} and repeat argument presented in~\eqref{eq:int_ell_j} with $a(\cdot)$ instead of $b_j$. This leads us to $\|\mathcal{T}_ma\|_{L^1((\mathcal{O}(Q^{*}))^{c},dw)} \leq C_2$. 

We turn now to complete the proof of part~\eqref{numitem:hardy} of Theorem \ref{teo:main_teo}.  Since $\mathcal T_m$ maps continuously $L^1(dw)$ into $\mathcal S'(\mathbb R^N)$, it suffices
(by Theorems~\ref{teo:main1} and~\ref{teo:main4}) to check that there is a constant $C>0$ such that $\|R_j\mathcal{T}_{m}a\|_{L^1(dw)}\leq CM$ for any atom $a(\cdot)$ of $H^1_{\rm{atom}}$ and $j=1,2,\ldots, N$. For this purpose note that the operator $R_j\mathcal{T}_m$ is associated with the multiplier $n(\xi)=-i\frac{\xi_j}{\|\xi\|}m(\xi)$ and there is a constant $C_3>0$ such that
\begin{align*}
\sup_{t>0}\|\psi(\cdot)n(t\cdot)\|_{W^{s}_{2}} \leq C_3M.
\end{align*}
Therefore, we can repeat the argument presented above to the operator associated with $n$ in place of $\mathcal{T}_m$.
\end{proof}

\section{Case of \texorpdfstring{$L^1(dw)$}{L1(dw)} bounded translations}\label{sec:boundedL1}
\begin{teo}\label{teo:Thm3}  Assume that  for a root system $R$ and a multiplicity function $k\geq 0$ the translations $\tau_{\mathbf y}$ are uniformly bounded operators on $L^1(dw)$, that is, there is a constant $C>0$ such that for any $f\in L^1(dw)$ we have \begin{equation}\label{uniform_L1} 
\sup_{\mathbf y\in\mathbb R^N}\| \tau_{\mathbf y} f\|_{L^1(dw)} \leq C\| f\|_{L^1(dw)}. 
\end{equation}
 If $m$ is a bounded function on $\mathbb R^N$ such that ~\eqref{eq:assumption} is satisfied for  some $s>\mathbf N\slash 2$, then 
the multiplier operator $\mathcal T_m$ is  of weak-type $(1,1)$, bounded on $L^p(dw)$ for $1<p<\infty$, and bounded on the Hardy space $H^1_{\Delta}$. 
\end{teo}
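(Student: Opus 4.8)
The plan is to observe that in the proof of Theorem~\ref{teo:main_teo} the full strength $s>\mathbf N$ was needed only to produce Proposition~\ref{propo:integral}, and that the extra factor $\mathbf N\slash 2$ (beyond the classical $\mathbf N\slash 2$) entered exclusively through Proposition~\ref{propo:compact_supports} and Corollary~\ref{coro:weight}: there, passing from an $L^1(dw)$ bound for $\tau_{\mathbf y}(f*\phi)$ to an $L^2(dw)$ bound forced, via Cauchy--Schwarz over the orbit $\mathcal O(B(\mathbf y,r_1+r_2))$ of its support, a loss of $w(B(\mathbf y,r_1+r_2))^{1\slash 2}\sim(r_1+r_2)^{\mathbf N\slash 2}$. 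Under the hypothesis~\eqref{uniform_L1} this detour through $L^2(dw)$ is unnecessary, so I would reprove the two kernel estimates~\eqref{eq:K_ell_delta} and~\eqref{eq:Holder_int} directly in $L^1(dw)$, now requiring only $s>\mathbf N\slash 2$. Once Proposition~\ref{propo:integral} is available under this weaker smoothness, Sections~\ref{sec:section_proof} and~\ref{sec:Hardy} apply verbatim, since they use only Proposition~\ref{propo:integral} together with the $L^2(dw)$-boundedness of $\mathcal T_m$, which holds for any bounded $m$.

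For the size estimate I would set $F=\mathcal F^{-1}m_\ell$ and decompose it into annular pieces $F_j=F\,\Psi_j(\|\cdot\|)$ using the partition~\eqref{eq:reproduce}, so that $\supp F_j\subseteq B(0,2^{j+1})$. Since each $F_j\in L^2(dw)$ is compactly supported, Theorem~\ref{teo:support} gives $\tau_{-\mathbf y}F_j(\mathbf x)=0$ whenever $d(\mathbf x,\mathbf y)>2^{j+1}$, so on the support of $\tau_{-\mathbf y}F_j$ the weight $(1+d(\mathbf x,\mathbf y))^{\delta'}$ is $\lesssim 2^{j\delta'}$. Combining this localization with~\eqref{uniform_L1} yields
\begin{equation*}
\int |\tau_{-\mathbf y}F_j(\mathbf x)|(1+d(\mathbf x,\mathbf y))^{\delta'}\, dw(\mathbf x)\leq C\,2^{j\delta'}\|\tau_{-\mathbf y}F_j\|_{L^1(dw)}\leq C'\,2^{j\delta'}\|F_j\|_{L^1(dw)},
\end{equation*}
and summing over $j$ gives $\int|\tau_{-\mathbf y}F(\mathbf x)|(1+d(\mathbf x,\mathbf y))^{\delta'}\,dw(\mathbf x)\leq C\int|F(\mathbf x)|(1+\|\mathbf x\|)^{\delta'}\,dw(\mathbf x)$. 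The right-hand side is at most $CM$ for every $0\le\delta'<s-\mathbf N\slash 2$: one writes $\int|F|(1+\|\cdot\|)^{\delta'}\,dw\le \|F(1+\|\cdot\|)^{\beta}\|_{L^2(dw)}\,\|(1+\|\cdot\|)^{\delta'-\beta}\|_{L^2(dw)}$ and invokes~\eqref{eq:insertion}, which now only demands $\mathbf N\slash 2+\delta'<\beta<s$ instead of $\mathbf N+\delta'<\beta<s$ (the second factor converges precisely when $\beta>\mathbf N\slash 2+\delta'$). This furnishes the analogue of~\eqref{eq:m_ell} with a strictly positive $\delta'$, whence~\eqref{eq:K_ell_delta} via the scaling~\eqref{eq:KK}.

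For the regularity estimate I would keep the factorization $\mathcal F^{-1}m_\ell=(\mathcal F^{-1}m_{\ell,1})*h_1$ of~\eqref{eq:cnv}, exactly as in~\eqref{eq:Holder_tilde}. Writing $\tilde K_\ell(\mathbf x,\mathbf y)=\int \tau_{-\mathbf z}(\mathcal F^{-1}m_{\ell,1})(\mathbf x)\,h_1(\mathbf z,\mathbf y)\,dw(\mathbf z)$, taking differences in $\mathbf y$, and applying Fubini gives
\begin{equation*}
\int |\tilde K_\ell(\mathbf x,\mathbf y)-\tilde K_\ell(\mathbf x,\mathbf y')|\,dw(\mathbf x)\leq \Big(\sup_{\mathbf z}\|\tau_{-\mathbf z}(\mathcal F^{-1}m_{\ell,1})\|_{L^1(dw)}\Big)\int |h_1(\mathbf z,\mathbf y)-h_1(\mathbf z,\mathbf y')|\,dw(\mathbf z).
\end{equation*}
By~\eqref{uniform_L1} the supremum is at most $C\|\mathcal F^{-1}m_{\ell,1}\|_{L^1(dw)}\le CM$, which by Cauchy--Schwarz and~\eqref{eq:insertion} is finite as soon as $s>\mathbf N\slash 2$, while the heat-kernel H\"older bound~\eqref{eq:Holder} gives $\int|h_1(\mathbf z,\mathbf y)-h_1(\mathbf z,\mathbf y')|\,dw(\mathbf z)\le C\|\mathbf y-\mathbf y'\|$. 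After rescaling by $2^\ell$ this is~\eqref{eq:Holder_int}. The two estimates~\eqref{eq:K_ell_delta} and~\eqref{eq:Holder_int} together yield Proposition~\ref{propo:integral} under the sole assumption $s>\mathbf N\slash 2$.

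The main obstacle, and the only genuinely new point, is the improved size estimate of the second paragraph: it is essential that the support-localization of Theorem~\ref{teo:support} be combined with the hypothesis~\eqref{uniform_L1}, for this is exactly what permits weighing $\tau_{-\mathbf y}F_j$ by $(1+d(\mathbf x,\mathbf y))^{\delta'}\sim 2^{j\delta'}$ on its support and then bounding its $L^1(dw)$-mass by that of $F_j$ \emph{without} passing through $L^2(dw)$. With Proposition~\ref{propo:integral} in hand, the Calder\'on--Zygmund argument of Section~\ref{sec:section_proof} gives weak type $(1,1)$ and, by interpolation and duality, strong type $(p,p)$ for $1<p<\infty$, and the atomic/Riesz-transform argument of Section~\ref{sec:Hardy} gives boundedness on $H^1_\Delta$.
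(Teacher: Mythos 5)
Your proposal is correct and takes essentially the same route as the paper's own proof: the paper likewise combines the annular decomposition \eqref{eq:reproduce} with Theorem~\ref{teo:support} and the hypothesis \eqref{uniform_L1} to obtain the weighted transference bound $\| f(\mathbf x,\mathbf y)(1+d(\mathbf x,\mathbf y))^\delta\|_{L^1(dw(\mathbf x))}\leq C\| f(\mathbf x)(1+\|\mathbf x\|)^\delta\|_{L^1(dw(\mathbf x))}$ (its \eqref{eq:weightL1}), then uses Cauchy--Schwarz with \eqref{eq:insertion} at the threshold $s>\mathbf N\slash 2$ and the factorization \eqref{eq:cnv} with the heat-kernel bound \eqref{eq:Holder} to recover \eqref{eq:K_ell_delta} and \eqref{eq:Holder_int}, hence Proposition~\ref{propo:integral}, and concludes via Sections~\ref{sec:section_proof} and~\ref{sec:Hardy}. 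The only (immaterial) difference is that the paper first extends the support theorem to $L^1(dw)$ functions by density, whereas you apply it directly to the compactly supported $L^2$ pieces $F_j$, which suffices since $\mathcal F^{-1}m_\ell\in L^2(dw)$.
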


\begin{remark}\normalfont  The same analysis applies to  any normalized system  of roots $R$, $k\geq 0$, and  radial multipliers, because the Dunkl transform of any radial function is radial and $\|\tau_{\mathbf y}f\|_{L^1(dw)}\leq \|  f\|_{L^1(dw)}$ for $f$ being radial. 
\end{remark}

\begin{remark}\normalfont
The inequality \eqref{uniform_L1} holds in the rank-one  case (see, e.g. \cite[Section 2.8]{Roesler-Voit} and hence in the product case.  
\end{remark}
\begin{proof}[Proof of Theorem \ref{teo:Thm3}]
Since $L^2(B(0,r), dw)$ is dense in $L^1(B(0,r), dw)$, by Theorem~\ref{teo:support} we have
\begin{equation}\label{eq:suppL1}
\text{\rm supp}\, \tau_{\mathbf x}f(-\,\cdot)\subset \mathcal O(B(\mathbf x,r)) \ \ \text{\rm for}\  f\in L^1(dw), \ \text{\rm supp}\, f\subseteq B(0,r).
\end{equation}
From \eqref{eq:suppL1} we easily deduce that for any $\delta\geq 0$ there is $C>0$ such that
\begin{equation}
\| f(\mathbf x,\mathbf y)(1+d(\mathbf x,\mathbf y))^\delta\|_{L^1(dw(\mathbf x))} \leq C\| f(\mathbf x)(1+\| \mathbf x\|)^\delta\|_{L^1(dw(\mathbf x))}.
\end{equation}
Indeed, let $f_j=\Psi_j f$, where $\Psi_j$ are defined in \eqref{eq:reproduce}. Then using \eqref{eq:notation},    \eqref{uniform_L1}, and \eqref{eq:suppL1}  we have 
\begin{equation}\begin{split}\label{eq:weightL1}
\| f(\mathbf x,\mathbf y)(1+d(\mathbf x,\mathbf y))^\delta\|_{L^1(dw(\mathbf x))} 
&\leq \sum_{j=0}^\infty \| f_j(\mathbf x,\mathbf y)(1+d(\mathbf x,\mathbf y))^\delta\|_{L^1(dw(\mathbf x))} \\
&\leq C'\sum_{j=0}^\infty 2^{j\delta}\| f_j(\mathbf x,\mathbf y)\|_{L^1(dw(\mathbf x))}\\
&\leq CC' \sum_{j=0}^\infty 2^{j\delta}\| f_j(\mathbf x)\|_{L^1(dw(\mathbf x))}\\
&\leq C'' \| f(\mathbf x)(1+\| \mathbf x\|)^\delta\|_{L^1(dw(\mathbf x))}. 
\end{split}\end{equation}
Let $m_\ell$, $m_{\ell, 1}$, $\tilde K_t(\mathbf x,\mathbf y)$, $K_t(\mathbf x,\mathbf y)$ be defined by \eqref{eq:function_m_l}, \eqref{eq:just_K}, and \eqref{eq:tilde_K} respectively. Take any $0\leq \delta<s-\mathbf N\slash 2$. Then 
\begin{equation}
\| \mathcal F m_\ell(\mathbf x)(1+\| \mathbf x\|^2)^{\mathbf N\slash 2+\delta}\|_{L^2(dw(\mathbf x))}  
+
\| \mathcal F m_{\ell,1}(\mathbf x)(1+\| \mathbf x\|^2)^{\mathbf N\slash 2+\delta}\|_{L^2(dw(\mathbf x))} \leq C_\delta  M, 
\end{equation}
which, by the Cauchy-Schwartz inequality  and \eqref{eq:homo}, imply 
$$ \| \mathcal F m_\ell (\mathbf x)(1+\| \mathbf x\|)^\delta\|_{L^1(dw(\mathbf x)} + \| \mathcal F m_{\ell,1} (\mathbf x)(1+\| \mathbf x\|)^\delta\|_{L^1(dw(\mathbf x))}  \leq C_\delta M.$$ 
By \eqref{eq:weightL1} and \eqref{eq:KK}, the kernels $K_\ell(\mathbf x,\mathbf y)$ satisfy \eqref{eq:K_ell_delta}. Further, the H\"older regularity  \eqref{eq:Holder_int} hold for $K_\ell(\mathbf x,\mathbf y)$  as well (see   \eqref{eq:Holder_tilde} for the proof). Hence we easily deduce that conclusion of Proposition \ref{propo:integral} is valid.  Finally the weak-type $(1,1)$ estimate and the boundedness on $L^p(dw)$ of the multiplier operator $\mathcal T_m$ are obtained by the standard Calder\'on-Zygmund analysis presented in Section \ref{sec:section_proof}. The proof of boundedness of $\mathcal T_m$ on the Hardy space $H^1_{\rm atom}$ is the same as in Section \ref{sec:Hardy}. 
\end{proof}


\begin{thebibliography}{99}
\bibitem{AAS-Colloq}
	B. Amri, J.-Ph. Anker, M. Sifi,
    \emph{Three results in Dunkl theory},
    Colloq. Math. 118 (2010), no. 1,
    299--312.

\bibitem{AH}
B. Amri, A. Hammi,
\textit{Dunkl-Schr\"odinger operators\/},
preprint  \href{https://arxiv.org/abs/1802.01547}{[arXiv:1802.01547]}.

\bibitem{AS}
	B. Amri, M. Sifi,
	\emph{Riesz transforms for Dunkl transform\/},
	Ann. Math. Blaise Pascal 19 (2012), {no. 1,} 247--262.

\bibitem{ABDH}
	J.-Ph. Anker, N. Ben Salem, J. Dziuba\'nski, N. Hamda,
	\emph{The Hardy Space $H^1$ in the Rational Dunkl Setting},
	Constr. Approx. 42 (2015), no. 1,
	93--128.
\bibitem{conjugate}
	J.-Ph. Anker, J. Dziuba\'nski, A. Hejna,
	\emph{Harmonic functions, conjugate harmonic functions and the Hardy space $H^1$ in the rational Dunkl setting},
	preprint \href{https://arxiv.org/abs/1802.06607}{[arXiv:1802.06607]}.
    
\bibitem{CW}
	R. R. Coifman, G. Weiss,
	\emph{Extensions of Hardy spaces and their use in analysis\/},
	Bull. Amer. Math. Soc. 83 (1977), no. 4, 569--615.
    
\bibitem{DaiXu}
	F. Dai, H. Wang,
	\emph{A transference theorem for the Dunkl transform and its applications},
    J. Funct. Anal. 258 (12), 
    2010,
    4052-4074.
  
\bibitem{deJeu}
	M.F.E. de Jeu,
	\emph{The Dunkl transform\/},
	Invent. Math. 113 (1993), 147--162.

\bibitem{RoeslerDeJeu}
	M. de Jeu, M. R\"osler,
	\emph{Asymptotic analysis for the Dunkl kernel},
	J. Approx. Theory 119 (2002), no. 1,
	110--126.

\bibitem{Dunkl0}
	C.F.~Dunkl,
	\emph{Reflection groups and orthogonal polynomials on the sphere},
 	Math. Z. 197 (1988), no. 1,
 	33--60.
  
\bibitem{Dunkl}
	C.F. Dunkl,
	\emph{Differential-difference operators associated to reflection groups\/},
	Trans. {Amer}. Math. 311 {(1989), no. 1,} 167--183{.}

\bibitem{Dunkl2}
	C.F.~Dunkl,
	\emph{Integral kernels with reflection group invariance},
	Canad. J. Math. 43 (1991), no. 6,
	1213--1227.

\bibitem{Dunkl3}
	C.F.~Dunkl,
	\emph{Hankel transforms associated to finite reflection groups},
in: {\it Proc. of the special session on hypergeometric functions on domains of positivity, Jack polynomials and applications,}
	 Proceedings, Tampa 1991, Contemp. Math. 138 (1989),
	 123--138.

\bibitem{Dziubanski18}
	J. Dziuba\'nski, A. Hejna,
	\emph{Remark on atomic decompositions for Hardy space $H^1$ in the rational Dunkl setting},
	preprint \href{https://arxiv.org/abs/1803.10302}{[arXiv:1803.10302]}.
    
\bibitem{FS}
	C. Fefferman, E.M. Stein,
	\emph{$H^p$ spaces of several variables\/},
	Acta Math., 129 (1972), {no. 3-4,} 137--195.
    
\bibitem{GR}
	L. Gallardo, C.  Rejeb,
	\emph{A new mean value property for harmonic functions
relative to the Dunkl-Laplacian operator and applications\/},
	Trans. Amer. Math. Soc. 368 (2015), {no. 5,} 3727--3753.
    
\bibitem{Hormander}
 L. H\"ormander,
 \emph{Estimates for translation invariant operators in $L^p$ spaces},
  Acta Math. 104,
  1960, 93--140.
  
\bibitem{Roesler2}
	M. R\"osler,
	\emph{Generalized Hermite polynomials and the heat equation for Dunkl operators\/},
	Comm. Math. Phys. {192} (1998), 519--542.

\bibitem{Roesle99}
	M. R\"osler,
	\emph{Positivity of Dunkl's intertwining operator\/},
	Duke Math. J. 98 (1999), no. 3, 445--463.
    

\bibitem{Roesler2003}
	M. R\"osler,
	\emph{A positive radial product formula for the Dunkl kernel\/},
	Trans. Amer.Math. Soc. 355 (2003), no. 6, 2413--2438{.}
	
\bibitem{Roesler3}
	M. R\"osler:
	Dunkl operators (theory and applications).
	In: Koelink, E., Van Assche, W. (eds.)
	Orthogonal polynomials and special functions (Leuven, 2002), 93--135.
	Lect. Notes Math. 1817, Springer-Verlag (2003)
    
\bibitem{Roesler-Voit}
	M.~R\"osler, M. Voit,
	\emph{Dunkl theory, convolution algebras, and related Markov processes\/},
in \textit{Harmonic and stochastic analysis of Dunkl processes\/},
{P. Graczyk, M. R\"osler, M. Yor (eds.), 1--112, Travaux en cours 71,}
Hermann, Paris, 2008.

\bibitem{Trimeche2002}
	K. Trim\'eche,
    \emph{Paley-Wiener theorems for the Dunkl transform and Dunkl translation operators},
     Integral Transforms Spec. Funct. 13 (2002), no. 1, 
     17--38.

\bibitem{ThangaveluXu}
	S. Thangavelu, Y. Xu,
	\emph{Convolution operator and maximal function for the Dunkl transform},
	 J. Anal. Math. 97 (2005),
	 25--55.
     
\bibitem{ThangaveluXu2}
	S. Thangavelu, Y. Xu,
    \emph{Riesz transforms and Riesz potentials for the Dunkl transform},
    J. Comp. and Appl. Math. 199 (2007),
    181--195.
     
\bibitem{St2}
	E. M. Stein,
	\emph{Harmonic analysis\/}
{(\emph{real variable methods, orthogonality and oscillatory integrals\/})},
	Princeton Math. Series 43, Princeton Univ. Press, 1993.

\bibitem{SW}
	E. M. Stein, G. Weiss,
	\emph{On the theory of harmonic functions of several variables {I}\/}
{(the theory of $H^p$-spaces\/)},
	Acta Math. 103 (1960), 25--62.
\end{thebibliography}
\end{document}